\theoremstyle{plain}
\newtheorem{theorem}{Theorem}[section]
\newtheorem{definition}[theorem]{Definition}
\newtheorem{example}[theorem]{Example}
\newtheorem{lemma}[theorem]{Lemma}
\numberwithin{equation}  {section}
\begin{document}
	
	\title[A New Generalized Beurling Theorem]{A Generalized
		Beurling Theorem in Finite von Neumann Algebras}

	\author{Don Hadwin}
	\address{University of New Hampshire}
	\email{don@unh.edu}
	
	\author{Wenjing Liu}
	\address{University of New Hampshire}
	\email{wbs4@wildcats.unh.edu}
	
	\author{Lauren Sager}
	\address{University of New Hampshire}
	\email{lbq32@wildcats.unh.edu}
	\thanks{}
	\keywords{ Beurling theorem, Unitarily invariant norm, von Neumann algebra}
	\dedicatory{Dedicated to Lyra, a great Chinese-American girl\\
	}

	\begin{abstract}
		In 2016 and 2017, Haihui Fan, Don Hadwin and Wenjing Liu proved a commutative and noncommutative version of Beurling's theorems for a continuous unitarily invariant norm $\alpha $ on $L^{\infty}(\mathbb{T},\mu)$ and tracial finite von Neumann algebras $\left( \mathcal{M},\tau \right) $, respectively. In the paper, we study unitarily $\|\|_{1}$-dominating invariant norms $\alpha $ on finite von Neumann algebras. First we get a Burling theorem in commutative von Neumann algebras by defining $H^{\alpha}(\mathbb{T},\mu)=\overline {H^{\infty}(\mathbb{T},\mu)}^{\sigma(L^{\alpha}\left(
			\mathbb{T} \right),\mathcal{L}^{\alpha^{'}}\left(
			\mathbb{T} \right))}\cap L^{\alpha}(\mathbb{T},\mu)$, then prove that the generalized Beurling theorem holds. Moreover, we get similar result in noncommutative case. The key ingredients in the proof of our result include a factorization theorem and a density theorem for $L^{\alpha }\left(\mathcal{M},\tau \right) $.
		
	\end{abstract}
	
	\maketitle

	\section{Introduction}
	
	Let $\mathbb{T}$ be the unit circle, i.e., $\mathbb{T}=\{ \lambda\in
	\mathbb{C}:\left\vert \lambda\right\vert =1\}$, and let $\mu$ be Haar measure
	(i.e., normalized arc length) on $\mathbb{T}$. The classical and influential
	Beurling-Helson-Lowdenslager theorem (see \cite{Beurling},\cite{helson},\cite{H.L}) states that if $W$ is a closed
	$H^{\infty}(\mathbb{T},\mu)$-invariant subspace (or, equivalently,
	$zW\subseteq W)$ of $L^{2}\left(  \mathbb{T},\mu\right)  \text{,}$ then
	$W=\varphi H^{2}$ for some $\varphi\in L^{\infty}(\mathbb{T},\mu)\text{,}$
	with $\left\vert \varphi\right\vert =1$ a.e.$(\mu)$ or $W=\chi_{E}%
	L^{2}(\mathbb{T},\mu)$ for some Borel set $E\subset\mathbb{T}$. If $0\neq
	W\subset H^{2}(\mathbb{T},\mu)$, then $W=\varphi H^{2}(\mathbb{T},\mu)\;$for
	some $\varphi\in H^{\infty}(\mathbb{T},\mu)$ with $\left\vert \varphi
	\right\vert =1$ a.e. $(\mu)$. Later, the Beurling's theorem was extended to
	$L^{p}(\mathbb{T},\mu)$ and $H^{p}(\mathbb{T},\mu)$ with $1\leq p\leq\infty$,
	with the assumption that $W$ is weak*-closed when $p=\infty$ (see
	\cite{halmos},\cite{helson},\cite{H.L},\cite{hoffman}). In \cite{chen2}, Yanni Chen extended the Helson-Lowdenslager-Beurling theorem for all continuous
	$\Vert \Vert_{1}$-dominating normalized gauge norms on $\mathbb{T}$. In \cite{F.H.W}, \cite{F.H.W-2} Haihui Fan, Don Hadwin and Wenjing Liu proved a commutative and noncommutative version of Beurling's theorems for a continuous unitarily invariant norm $\alpha $ on $L^{\infty}(\mathbb{T},\mu)$ and a tracial finite von Neumann algebra $\left( \mathcal{M},\tau \right) $, respectively. Later, Lauren Sager and Wenjing Liu got a similarly result for semifinite von Neumann algebras in \cite{sager-liu}.\\
	In this paper, we first extend the Helson-Lowdenslager-Beurling theorem for a much larger class of norms, $\Vert \Vert_{1}$-dominating normalized gauge norms on $L^{\infty} \left(\mathbb{T},\mu \right) $. For each such norm $\alpha $, we define the dual norm $\alpha{'}$, let $
	\mathcal{L}^{\alpha}(\mathbb{T},\mu)=\{f:f\; \text{is a measurable function on}\;
	\mathbb{T}\; \; \text{with}\; \alpha(f)<\infty\} \text{,}$ and
	$
	L^{\alpha}(\mathbb{T},\mu)=\overline{L^{\infty}(\mu)}^{\alpha}$, i.e.,
		the $\alpha $-closure of $L^{\infty}(\mu)$ in $
	\mathcal{L}^{\alpha}(\mathbb{T},\mu)\text{.}
	$ We have Banach space $L^{\alpha} \left(\mathbb{T},\mu \right)=\overline{L^{\infty} \left(\mathbb{T},\mu \right)}^{\alpha}$ and a Hardy space $H^{\alpha}=\overline{H^{\infty} \left(\mathbb{T},\mu \right)}^{\sigma(L^{\alpha}\left(
		\mathbb{T} \right),\mathcal{L}^{\alpha^{'}}\left(
		\mathbb{T} \right))}\cap L^{\alpha} \left(\mathbb{T},\mu \right) $with
	$L^{\infty}(\mathbb{T},\mu)\subset L^{\alpha}(\mathbb{T},\mu)\subset L^{1}(\mathbb{T},\mu)$ and $H^{\infty}(\mathbb{T},\mu)\subset H^{\alpha}(\mathbb{T},\mu)\subset H^{1}(\mathbb{T},\mu)$.
	In this new setting, we prove the following Beurling-Helson-Lowdenslager theorem, which is the main result of this paper.
	
	{\footnotesize THEOREM} \ref{sl-blc} Suppose $\mu$ is Haar measure on $\mathbb{T}$ and $\alpha$ is a normalized gauge norm on $L^{\infty} \left(
			\mathbb{T} \right) $ with $\alpha(.) \geq \Vert .\Vert_{1}$. 
			Let $M$ be an $\sigma(L^{\alpha}\left(
			\mathbb{T} \right),L^{\alpha^{'}}\left(
			\mathbb{T} \right))$-closed linear subspace of $L^{\alpha}\left(
			\mathbb{T} \right)$ with $zM\subseteq M$ if and only if either $W=\varphi
			H^{\alpha}(\mu)$ for some unimodular function $\varphi$, or $W=\chi
			_{E}L^{\alpha}(\mu)$, for some Borel subset $E$ of $\mathbb{T}$. If $0\neq
			W\subset H^{\alpha}(\mu)$, then $W=\varphi H^{\alpha}(\mu)$ for some inner
			function $\varphi$.
		
		To prove Theorem {\footnotesize THEOREM} \ref{sl-blc}. we need the following technical theorems in Section 3.
	    {\footnotesize THEOREM} \ref{factorization1} Let  $\alpha$ be a normalized gauge norm on $L^{\infty} \left(
			\mathbb{T} \right) $ with $\alpha(.) \geq \Vert .\Vert_{1}$. If $k \in L^{\infty}$, $k^{ -1} \in$ $L^{\alpha}$, then
			there is a unimodular function $u \in L^{\infty}$ and an outer function $s \in
			H^{\infty}$ such that $k =us$ and $s^{ -1} \in$ $H^{\alpha}$.\\
	 {\footnotesize THEOREM} \ref{density1}
				Suppose $\alpha$ is a normalized gauge norm on $L^{\infty} \left(
				\mathbb{T} \right) $ with $\alpha(.) \geq \Vert .\Vert_{1}$.
				Let $M$ be an $\sigma(L^{\alpha}\left(
				\mathbb{T} \right),L^{\alpha^{'}}\left(
				\mathbb{T} \right))$-closed linear subspace of $L^{\alpha}\left(
				\mathbb{T} \right)$ with $zM\subseteq M$. Then  
				\newline(1) $M\cap L^{\infty}\left(
				\mathbb{T} \right)$ is
				weak*-closed in $L^{\infty}\left(
				\mathbb{T} \right)$, 
				\newline(2) $M=\overline{M\cap
					L^{\infty}\left(
					\mathbb{T} \right)}^{\sigma(L^{\alpha}\left(
					\mathbb{T} \right),L^{\alpha^{'}}\left(
					\mathbb{T} \right))}\text{.}$
		
	 In noncommutative case,  we get similarly result. Suppose $\mathcal{M}$ is a finite von Neumann algebra with a faithful, normal,
	 tracial state $\tau $, $\Phi _{\tau }$ be the conditional expectation and $%
	 \alpha $ is a normalized, unitarily invariant $\|\|_{1}$-dominating norm
	 on $\mathcal{M}$. Let $L^{\alpha }(\mathcal{M},\tau )$ be the $\alpha $
	 closure of $\mathcal{M}$,i.e., $L^{\alpha }(\mathcal{M},\tau )=[\mathcal{M}%
	 ]_{\alpha }$. Similarly, $H^{\alpha }(\mathcal{M},\tau )= \overline{H^{\infty }(%
	 	\mathcal{M},\tau )}^{\sigma(L^{\alpha}\left(
	 	\mathcal{M},\tau \right),L^{\alpha^{'}}\left(
	 	\mathcal{M},\tau \right))}\cap L^{\alpha }(\mathcal{M},\tau ) $, $H_{0}^{\infty }(\mathcal{M},\tau )=\ker
	 (\Phi _{\tau })\cap H^{\infty }(\mathcal{M},\tau )$ and $H_{0}^{\alpha }(%
	 \mathcal{M},\tau )=\ker (\Phi _{\tau })\cap H^{\alpha }(\mathcal{M},\tau ).$ Then we get the following generalized Beurling theorem in finite von Neumann algebras.
	 
	  {\footnotesize THEOREM} \ref{main} Let $\mathcal{M}$ be a finite von Neumann algebra with a faithful, normal, tracial state $\tau $ and $\alpha $ be a 
	 	normalized, unitarily invariant, $\|\|_{1}$-dominating norm on $\mathcal{M}$.Let $H^{\infty }$
	 	be a finite subdiagonal subalgebra of $\mathcal{M}$ and $\mathcal{D}%
	 	=H^{\infty }\cap (H^{\infty })^{\ast }$. If $\mathcal{W}$ is a closed
	 	subspace of $L^{\alpha }(\mathcal{M},\tau )$ such that $\mathcal{W}H^{\infty
	 	}\subseteq \mathcal{W}$, then there exists a $\sigma(L^{\alpha}\left(
	 	\mathcal{M},\tau \right),L^{\alpha^{'}}\left(
	 	\mathcal{M},\tau \right))$ closed subspace $\mathcal{Y}$
	 	of $L^{\alpha }(\mathcal{M},\tau )$ and a family $\{u_{\lambda }\}_{\lambda
	 		\in \Lambda }$ of partial isometries in $\mathcal{M}$ such that\newline
	 	(1) $u_{\lambda }^{\ast }\mathcal{Y}=0$ for all $\lambda \in \Lambda $,%
	 	\newline
	 	(2) $u_{\lambda }^{\ast }u_{\lambda }\in \mathcal{D}$ and $u_{\lambda
	 	}^{\ast }u_{\mu }=0$ for all $\lambda ,\mu \in \Lambda $ with $\lambda \neq
	 	\mu $,\newline
	 	(3) $\mathcal{Y}=\overline{H_{0}^{\infty }\mathcal{Y}}^{\sigma(L^{\alpha}\left(
	 		\mathcal{M},\tau \right),L^{\alpha^{'}}\left(
	 		\mathcal{M},\tau \right))}$,\newline
	 	(4) $\mathcal{W}=\mathcal{Y}\oplus ^{col}(\oplus _{\lambda \in
	 		\Lambda }^{col}u_{\lambda }H^{\alpha }).$\\
	The organization of the paper is as follows. In section 2, we introduce $\|\|_{1}$-dominating normalized, unitarily invariant norms. In section 3, we study the relations between commutative Hardy spaces $H^{\alpha }(\mathbb{T},\mu)$ and get the generalized Beurling theorem in the commutative von Neumann algebras setting. In
	section 4, using similar techniques as in section 3, we prove a version of the generalized noncommutative Beurling's theorem for finite von Neumann algebras.

\section{Gauge Norms on the Unit Circle}
 A norm $\alpha$ on $L^{\infty} (\mathbb{T},\mu)$ is a \emph{normalized gauge norm} if
\begin{enumerate}
	\item $\alpha(1) =1\text{,}$
	
	\item $\alpha(\vert f\vert) =\alpha(f)$ for every $f \in L^{\infty}
	(\mathbb{T},\mu)$.
\end{enumerate}

We say that a\textbf{\ }\emph{normalized gauge norm} $\alpha$ is $ \Vert
\cdot\Vert_{1 ,\mu}$\emph{-dominating}  if there exists $c\in R^{+}$ such that \text{(3)} $\alpha(f) \geq c \Vert f\Vert_{1 ,\mu} ,\; \text{for every}\;f \in L^{\infty}
(\mathbb{T},\mu)\text{.}$

For example, it is easily to see the following fact that
\begin{enumerate}
	\item The common norm $\Vert\cdot\Vert_{p,\mu}$ is a $\alpha$ norm for $1\leq p\leq\infty$.
    \item If $1\leq p_{n}<\infty$ for $n\geq 1$, $\sum_{n=1}^{\infty}\frac{1}{2^{n}}\Vert\cdot\Vert_{p_{n},\mu}$ is a $\alpha$ norm, which is not equivalent to any $\Vert\cdot\Vert_{p,\mu}$.
\end{enumerate}

We can extend the normalized gauge norm $\alpha$ from $L^{\infty}(\mathbb{T},\mu)$
to the set of all measurable functions, and define $\alpha$ for all measurable
functions $f$ on $\mathbb{T}$ by%
\[
\alpha(f)=\sup\{ \alpha(s):s\; \text{is a simple function}\;,0\leq s\leq|f|\}
\text{.}%
\]
It is clear that $\alpha(f)=\alpha(|f|)$ still holds. \newline Define the following two spaces.
\[
\mathcal{L}^{\alpha}(\mathbb{T},\mu)=\{f:f\; \text{is a measurable function on}\;
\mathbb{T}\; \; \text{with}\; \alpha(f)<\infty\} \text{,}%
\]%
\[
L^{\alpha}(\mathbb{T},\mu)=\overline{L^{\infty}(\mu)}^{\alpha},\; \text{i.e.,
	the}\; \alpha\; \text{-closure of}\;L^{\infty}(\mu)\; \text{in}\;
\mathcal{L}^{\alpha}(\mathbb{T},\mu)\text{.}%
\]

The following are some properties of $\alpha$ norm in []

\begin{lemma}
	 Suppose $f,g: \mathbb{T}\rightarrow \mathbb{C}$ are measurable. Let $\alpha$ be a $\left\Vert \cdot \right\Vert _{1,\mu}$-dominating normalized gauge norm. Then the following statements are true\\
	 (1) If $|f|\leq |g|$, then $\alpha(f)\leq \alpha(g)$;\\
	 (2)  $\alpha(fg)\leq \alpha(f)\left\Vert g \right\Vert _{\infty}$;\\
	 (3) $\alpha(g)\leq \left\Vert g \right\Vert _{\infty}$;\\
	 (4) $L^{\infty} (\mathbb{T},\mu)\subset L^{\alpha} (\mathbb{T},\mu)\subset\mathcal{L}^{\alpha} (\mathbb{T},\mu)\subset L^{1} (\mathbb{T},\mu)$ .
\end{lemma}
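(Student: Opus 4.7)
The plan is to prove the four assertions in the stated order, since (1) does the heavy lifting and (2)--(4) are immediate consequences of it together with the extension of $\alpha$ to arbitrary measurable functions. The crux is (1); everything else is bookkeeping.

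For (1), I first reduce to the bounded case: since $\alpha(f)$ is defined for a general measurable $f$ as $\sup\{\alpha(s):s\text{ simple},\,0\le s\le|f|\}$, the inequality $|f|\le|g|$ shrinks the sup set and gives monotonicity---provided we already have it on bounded functions. So assume $f,g\in L^{\infty}(\mathbb{T},\mu)$ with $|f|\le|g|$. The gauge axiom $\alpha(|h|)=\alpha(h)$ yields unitary invariance, for any unimodular $u\in L^{\infty}$,
\[
\alpha(uh)=\alpha(|uh|)=\alpha(|h|)=\alpha(h).
\]
Now write $f=hg$ with $h=(f/g)\chi_{\{g\ne 0\}}$; since $|f|\le|g|$ the quotient is well defined and $|h|\le 1$ pointwise. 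A scalar Russo--Dye style decomposition $h=(u_{1}+u_{2})/2$ with $u_{1},u_{2}$ measurable and $|u_{i}|=1$ (explicitly $u_{1,2}=h\pm i\sqrt{1-|h|^{2}}\,(h/|h|)$ on $\{h\ne 0\}$ and $\pm 1$ elsewhere) combined with the triangle inequality then gives
\[
\alpha(f)=\alpha(hg)\le\tfrac12\bigl(\alpha(u_{1}g)+\alpha(u_{2}g)\bigr)=\alpha(g).
\]

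With (1) established, the remaining parts fall out quickly. Item (2) follows from $|fg|\le\|g\|_{\infty}|f|$, monotonicity, and positive homogeneity; item (3) is (2) applied with $f=1$ and $\alpha(1)=1$; item (4) splits into three inclusions. Here $L^{\infty}\subseteq L^{\alpha}$ comes from (3) together with the definition of $L^{\alpha}$ as the $\alpha$-closure of $L^{\infty}$; $L^{\alpha}\subseteq\mathcal{L}^{\alpha}$ is tautological; and $\mathcal{L}^{\alpha}\subseteq L^{1}$ uses the $\|\cdot\|_{1}$-dominating hypothesis, holding first on simple functions where $c\|s\|_{1}\le\alpha(s)$ is given, then extended to arbitrary measurable $f$ by taking $\sup$ over $0\le s\le|f|$ on both sides. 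The main obstacle I anticipate is the monotonicity statement (1) itself: the triangle inequality applied to $f=g-(g-f)$ only gives $\alpha(f)\le\alpha(g)+\alpha(g-f)$, which is the wrong direction, so the representation of a pointwise contraction as an average of two unimodular functions---converting the gauge axiom into true absolute monotonicity---is the essential ingredient.
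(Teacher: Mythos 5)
Your proof is correct, and it is essentially the standard argument: the paper itself states this lemma without proof (it is quoted from Yanni Chen's work on gauge norms, where monotonicity is obtained by exactly this device of writing a pointwise contraction as the average of two unimodular functions so that the gauge axiom $\alpha(|h|)=\alpha(h)$ yields $\alpha(hg)\le\alpha(g)$). Your reduction of (2)--(4) to (1), including the extension of the $\|\cdot\|_{1}$-domination to arbitrary measurable functions via simple functions and monotone convergence, matches the intended route; the only point worth making explicit is that the supremum definition of $\alpha$ on measurable functions agrees with the original norm on $L^{\infty}$ (which follows from your item (3), since simple functions approximate a bounded function uniformly from below), so that (1) for bounded $f,g$ is unambiguous.
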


Let $\alpha$ be a $\left\Vert \cdot \right\Vert _{1,\mu}$-dominating normalized gauge norm on $L^{\infty} \left(
\mathbb{T},\mu\right) $. We define the dual norm $\alpha^{'}: L^{\infty} (\mathbb{T},\mu)\rightarrow [0,\infty] $ by
\[ \alpha^{'}(f)=sup\{|\int_{\mathbb{T}}fhd\mu|:h \in L^{\infty} (\mathbb{T},\mu),\alpha(h)\leq 1\}\]
\[  =sup\{\int_{\mathbb{T}}|fh|d\mu:h \in L^{\infty} (\mathbb{T},\mu),\alpha(h)\leq 1\} \]

\begin{lemma}
Let $\alpha$ be a $\Vert \Vert_{1}$-dominating normalized gauge norm on $L^{\infty} \left(
\mathbb{T},\mu \right) $. Then the dual norm $\alpha^{'}$ is also a $\Vert \Vert_{1}$-dominating normalized gauge norm on $L^{\infty} \left(
\mathbb{T},\mu \right) $.
\end{lemma}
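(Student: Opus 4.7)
The plan is to verify the four defining conditions of a $\|\cdot\|_1$-dominating normalized gauge norm for $\alpha'$ one at a time, with the main technical tool being the equivalent expression
\[
\alpha'(f) = \sup\Bigl\{\int_{\mathbb{T}} |fh|\, d\mu : h \in L^{\infty}(\mathbb{T},\mu),\ \alpha(h)\le 1\Bigr\},
\]
together with the standing hypothesis $\alpha\geq\|\cdot\|_{1}$.

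First I would establish this equivalent expression. Given $h$ with $\alpha(h)\le 1$, put $h' = \overline{\operatorname{sgn}(fh)}\cdot h$, where $\operatorname{sgn}(z)=z/|z|$ (and $0$ if $z=0$). Since $|h'|=|h|$, axiom~(2) together with the monotonicity in Lemma~2.1(1) gives $\alpha(h')=\alpha(h)\le 1$, while $\int f h'\, d\mu = \int |fh|\, d\mu$. This reconciles the two formulas displayed in the definition of $\alpha'$ and is the only place where anything non-formal happens.

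Next I would verify the axioms. Finiteness on $L^{\infty}$ follows from $|\int fh\,d\mu|\le \|f\|_{\infty}\|h\|_{1}\le \|f\|_{\infty}\alpha(h)\le\|f\|_{\infty}$. Subadditivity and absolute homogeneity are immediate from linearity of the integral, and positive-definiteness is obtained by testing against $h=\overline{\operatorname{sgn}(f)}\chi_{E}$ for a set $E$ of positive measure on which $|f|>\varepsilon$. For normalization, plugging $h=1$ yields $\alpha'(1)\ge 1$, and the reverse inequality is $|\int h\,d\mu|\le\|h\|_{1}\le\alpha(h)\le 1$. The gauge identity $\alpha'(|f|)=\alpha'(f)$ is immediate from the equivalent expression, since both sides equal $\sup\{\int |f|\,|h|\,d\mu:\alpha(h)\le 1\}$. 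Finally, the $\|\cdot\|_{1}$-dominating property is obtained by the choice $h=\overline{\operatorname{sgn}(f)}$, so that $|h|\le 1$ gives $\alpha(h)\le\alpha(1)=1$ (by the gauge property and monotonicity) and $\int fh\,d\mu=\|f\|_{1}$, so that $\alpha'(f)\ge\|f\|_{1}$.

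I do not expect any genuine obstacle here. The only step requiring thought is the passage from the supremum of $|\int fh\,d\mu|$ to the supremum of $\int|fh|\,d\mu$ described in the second paragraph; after that each remaining item collapses to a one-line computation once the definitions and Lemma~2.1 are unfolded, so the proof should be quite short.
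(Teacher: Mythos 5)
Your proof is correct. There is nothing in the paper to compare it with: the lemma is stated there without any proof (it is essentially imported from Chen's treatment of dual norms in \cite{chen2}), and your direct verification is exactly the standard argument one would supply --- reconciling the two displayed formulas for $\alpha'$ via the unimodular multiplier $\overline{\operatorname{sgn}(fh)}$ (note only $|h'|\le|h|$ on the zero set of $fh$, but monotonicity is all you use), then checking finiteness, subadditivity, homogeneity, definiteness, normalization, the gauge identity, and the domination $\alpha'(f)\ge\|f\|_{1}$ by testing against $h=\overline{\operatorname{sgn}(f)}$. One caution worth recording: your bounds $\alpha'(f)\le\|f\|_\infty$ and $\alpha'(1)\le 1$ rest on $\|h\|_{1}\le\alpha(h)$, i.e. on the domination constant being $1$. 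Under the paper's literal definition of ``$\|\cdot\|_{1,\mu}$-dominating'' (only $\alpha\ge c\|\cdot\|_{1}$ for some $c>0$) the normalization of $\alpha'$ can genuinely fail: for instance $\alpha(f)=\int_{\mathbb{T}}|f|w\,d\mu$ with $w=\tfrac12$ on half of $\mathbb{T}$ and $w=\tfrac32$ on the other half is a normalized gauge norm dominating $\tfrac12\|\cdot\|_{1}$, yet $\alpha'(1)=2$. So your standing hypothesis $\alpha\ge\|\cdot\|_{1}$ --- which is what the paper in fact assumes in all of its theorem statements --- is not cosmetic and should be read as part of the lemma's hypotheses.
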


We also can define the dual spaces of $\mathcal{L}^{\alpha}(\mathbb{T},\mu)$ and $L^{\alpha}(\mathbb{T},\mu)$.
\[
\mathcal{L}^{\alpha^{'}}(\mathbb{T},\mu)=\{f:f\; \text{is a measurable function on}\;
\mathbb{T}\; \; \text{with}\; \alpha^{'}(f)<\infty\} \text{.}%
\]

\[
L^{\alpha^{'}}(\mathbb{T},\mu)=\overline{L^{\infty}(\mu)}^{\alpha^{'}},\; \text{i.e.,
	the}\; \alpha^{'}\; \text{-closure of}\;L^{\infty}(\mathbb{T},\mu)\; \text{in}\;
\mathcal{L}^{\alpha^{'}}(\mathbb{T},\mu)\text{.}%
\]\\
By lemma 2.3 in \cite{chen2}, we have 
\[L^{\infty} (\mathbb{T},\mu)\subset L^{\alpha^{'}} (\mathbb{T},\mu)\subset\mathcal{L}^{\alpha^{'}} (\mathbb{T},\mu)\subset L^{1} (\mathbb{T},\mu)\]\\
Now we consider the $\sigma(L^{\alpha}\left(
\mathbb{T} \right),\mathcal{L}^{\alpha^{'}}\left(
\mathbb{T} \right))$ topology on $L^{\alpha} (\mathbb{T},\mu)$ space. Since $L^{\infty} (\mathbb{T},\mu)\subset L^{\alpha} (\mathbb{T},\mu)$, $\overline {L^{\infty} (\mathbb{T},\mu)}^{\sigma(L^{\alpha}\left(
	\mathbb{T} \right),\mathcal{L}^{\alpha^{'}}\left(
	\mathbb{T} \right))}\subset \overline{L^{\alpha} (\mathbb{T},\mu)}^{\sigma(L^{\alpha}\left(
	\mathbb{T} \right),\mathcal{L}^{\alpha^{'}}\left(
	\mathbb{T} \right))}=L^{\alpha} (\mathbb{T},\mu)$. Thus we have the following result.
\begin{lemma}
	$\overline {L^{\infty} (\mathbb{T},\mu)}^{\sigma(L^{\alpha}\left(
		\mathbb{T} \right),\mathcal{L}^{\alpha^{'}}\left(
		\mathbb{T} \right))}=L^{\alpha} (\mathbb{T},\mu)$.
\end{lemma}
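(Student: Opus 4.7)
The plan is to establish the reverse inclusion $L^{\alpha}(\mathbb{T},\mu) \subset \overline{L^{\infty}(\mathbb{T},\mu)}^{\sigma(L^{\alpha},\mathcal{L}^{\alpha'})}$, since the containment $\subset$ has already been carried out in the paragraph preceding the lemma. So fix any $f \in L^{\alpha}(\mathbb{T},\mu)$. By definition, $L^{\alpha}(\mathbb{T},\mu) = \overline{L^{\infty}(\mu)}^{\alpha}$, so there is a sequence $\{f_n\} \subset L^{\infty}(\mathbb{T},\mu)$ with $\alpha(f_n - f) \to 0$. The goal is to upgrade $\alpha$-convergence to weak convergence in the $\sigma(L^{\alpha}, \mathcal{L}^{\alpha'})$ topology.

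The key intermediate step is a H\"older-type inequality: for every $h \in L^{\alpha}(\mathbb{T},\mu)$ and every $g \in \mathcal{L}^{\alpha'}(\mathbb{T},\mu)$,
\[
\int_{\mathbb{T}} |hg|\, d\mu \;\leq\; \alpha(h)\,\alpha'(g).
\]
To prove this, first note that the inequality follows immediately from the definition of $\alpha'$ whenever $h \in L^{\infty}(\mathbb{T},\mu)$ and $\alpha(h) \leq 1$. For a general $h \in L^{\alpha}(\mathbb{T},\mu)$, I would apply the inequality to the truncations $h_n = h \chi_{\{|h|\leq n\}} \in L^{\infty}(\mathbb{T},\mu)$, which satisfy $\alpha(h_n) \leq \alpha(h)$ by part (1) of the preceding lemma, and then pass to the limit via the monotone convergence theorem applied to $|h_n g| \nearrow |hg|$.

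With H\"older in hand, for every $g \in \mathcal{L}^{\alpha'}(\mathbb{T},\mu)$ we get
\[
\Bigl|\int_{\mathbb{T}} (f_n - f) g\, d\mu\Bigr| \;\leq\; \alpha(f_n - f)\,\alpha'(g)\;\longrightarrow\; 0,
\]
so $f_n \to f$ in the $\sigma(L^{\alpha}(\mathbb{T}),\mathcal{L}^{\alpha'}(\mathbb{T}))$ topology. Since each $f_n \in L^{\infty}(\mathbb{T},\mu)$, this places $f$ in $\overline{L^{\infty}(\mathbb{T},\mu)}^{\sigma(L^{\alpha},\mathcal{L}^{\alpha'})}$, completing the reverse inclusion and hence the proof.

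The only genuine obstacle is verifying that the dual pairing $(h,g) \mapsto \int hg\, d\mu$ is well-defined and continuous enough to support the H\"older estimate on the full space $L^{\alpha} \times \mathcal{L}^{\alpha'}$; once this is established by truncation and monotone convergence as above, the rest is essentially a soft Banach-space argument exploiting that $\alpha$-norm convergence is stronger than $\sigma(L^{\alpha},\mathcal{L}^{\alpha'})$-convergence.
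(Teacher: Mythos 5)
Your argument is correct and follows essentially the same route as the paper: the paper disposes of the reverse inclusion with the one-line remark that norm convergence implies $\sigma(L^{\alpha},\mathcal{L}^{\alpha'})$-convergence (``by properties of norm and weak closure''), which is exactly what you prove. Your only addition is to make explicit the H\"older-type estimate $\int_{\mathbb{T}}|hg|\,d\mu\leq\alpha(h)\,\alpha'(g)$ via truncation and monotone convergence, a detail the paper leaves implicit, and that verification is sound.
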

\begin{proof}
	As we show above, $\overline {L^{\infty} (\mathbb{T},\mu)}^{\sigma(L^{\alpha}\left(
		\mathbb{T} \right),\mathcal{L}^{\alpha^{'}}\left(
		\mathbb{T} \right))}\subset L^{\alpha} (\mathbb{T},\mu)$. Additionally, by properties of norm and weak closure, we have $L^{\alpha} (\mathbb{T},\mu) \subset \overline {L^{\infty} (\mathbb{T},\mu)}^{\sigma(L^{\alpha}\left(
		\mathbb{T} \right),\mathcal{L}^{\alpha^{'}}\left(
		\mathbb{T} \right))}$.
\end{proof}
Since $L^{\infty} \left(  \mathbb{T},\mu\right)  $ with the norm $\alpha$ is dense
in $L^{\alpha} (\mathbb{T},\mu)$, they have the same dual spaces. i.e. the normed dual $\left(  L^{\alpha} (\mathbb{T},\mu)
,\alpha\right)  ^{\#} =\left(  L^{\infty} \left(  \mathbb{T},\mu\right)
,\alpha\right)  ^{\#}$. By the following lemma, we can view the dual space as a vector space, a vector subspace of $L^{1}
(\mathbb{T},\mu)$. 
Suppose $w \in L^{1} (\mathbb{T},\mu)$, we define the functional
$\varphi_{w} :L^{\infty} (\mathbb{T},\mu) \rightarrow\mathbb{C}$ by%
\[
\varphi_{w} \left(  f\right)  =\int_{\mathbb{T}}fw d\mu\text{.}%
\]

\begin{lemma}
	\label{dual} Let $\alpha$ be a $\Vert \cdot \Vert_{1}$-dominating normalized gauge norm on $L^{\infty} \left(
	\mathbb{T},\mu\right) $ and  $\alpha^{'}$ be its dual norm . Then \newline(1) if $\varphi:L^{\infty}(\mathbb{T},\mu
	)\rightarrow\mathbb{C}$ is an $\sigma(L^{\alpha}\left(
	\mathbb{T} \right),\mathcal{L}^{\alpha^{'}}\left(
	\mathbb{T} \right))$-continuous linear functional, then
	there is a $w\in L^{1}(\mathbb{T},\mu)$ such that $\varphi=\varphi_{w}$, where  $\varphi_{w} \left(  f\right)  =\int_{\mathbb{T}}fw d\mu\text{.}$
	\newline(2) if $\varphi_{w}$ is $\sigma(L^{\alpha}\left(
	\mathbb{T} \right),\mathcal{L}^{\alpha^{'}}\left(
	\mathbb{T} \right))$-continuous on $L^{\infty}(\mathbb{T}
	,\mu)$, then \newline(a) $\left\Vert w\right\Vert _{1,\mu}\leq\left\Vert
	\varphi_{w}\right\Vert =\left\Vert \varphi_{\left\vert w\right\vert
	}\right\Vert \text{,}$ \newline(b) given $\varphi\;$in the dual of $L^{\alpha
}(\mathbb{T},\mu)$, i.e., $\varphi\in\left(  L^{\alpha}(\mathbb{T},\mu
),\sigma(L^{\alpha}\left(
\mathbb{T} \right),\mathcal{L}^{\alpha^{'}}\left(
\mathbb{T} \right))\right)^{\#}\text{,}$ there exists a $w\in L^{1}(\mathbb{T},\mu)$, such
that \newline%
\[
\forall f\in L^{\infty}(\mathbb{T},\mu),\varphi(f)=\int_{\mathbb{T}}%
fwd\mu \; \text{and}\;wL^{\alpha}(\mathbb{T},\mu)\subseteq L^{1}%
(\mathbb{T},\mu)%
\]
\newline(3) we have $L^{\alpha^{'}} (\mathbb{T},\mu)\subseteq \left(  L^{\alpha}(\mathbb{T},\mu
),\sigma(L^{\alpha}\left(
\mathbb{T} \right),\mathcal{L}^{\alpha^{'}}\left(
\mathbb{T} \right))\right)^{\#}\text{.}$

\end{lemma}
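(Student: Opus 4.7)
My plan is to treat the three parts in sequence, with the main work concentrated in (1) and (2b).

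For part (1), I would extend $\varphi$ by Hahn--Banach from $L^{\infty}(\mathbb{T},\mu)$ to a $\sigma(L^{\alpha},\mathcal{L}^{\alpha'})$-continuous linear functional $\tilde\varphi$ on all of $L^{\alpha}(\mathbb{T},\mu)$; since the induced topology on $L^{\infty}$ is locally convex, such a weakly continuous extension exists. I would then invoke the standard duality identification for the pair $(L^{\alpha},\mathcal{L}^{\alpha'})$: the defining seminorms of this topology are precisely $g\mapsto|\int gh\,d\mu|$ for $h\in\mathcal{L}^{\alpha'}$, so every $\sigma$-continuous functional on $L^{\alpha}$ has the form $g\mapsto\int gw\,d\mu$ for some $w\in\mathcal{L}^{\alpha'}$. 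The inclusion $\mathcal{L}^{\alpha'}\subset L^{1}(\mathbb{T},\mu)$ already noted in the paper then forces $w\in L^{1}$, establishing (1).

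For (2a), I would write $w=u|w|$ with $u$ a measurable unimodular function and test $\varphi_w$ against $\bar u\in L^{\infty}$: by the earlier inequality $\alpha(\cdot)\leq\|\cdot\|_{\infty}$, one has $\alpha(\bar u)\leq 1$ and $\varphi_w(\bar u)=\int|w|\,d\mu=\|w\|_{1,\mu}$, yielding $\|w\|_{1,\mu}\leq\|\varphi_w\|$. The equality $\|\varphi_w\|=\|\varphi_{|w|}\|$ follows from the two symmetric estimates $|\int fw|\leq\int|f||w|$ and $\int|f||w|=\int(|f|\bar u)\,w$, together with $\alpha(|f|\bar u)\leq\alpha(|f|)=\alpha(f)$. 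For (2b), restricting $\varphi$ to $L^{\infty}$ and applying (1) gives some $w\in L^{1}\cap\mathcal{L}^{\alpha'}$ with $\varphi(f)=\int fw\,d\mu$ on $L^{\infty}$. To verify $wL^{\alpha}\subseteq L^{1}$ and extend this formula to $L^{\alpha}$, I would first upgrade the H\"older-type bound $\int|hw|\,d\mu\leq\alpha'(w)\alpha(h)$ from $h\in L^{\infty}$ to all measurable $h$ with $\alpha(h)<\infty$, by approximating $|h|$ from below by simple functions and using the definition $\alpha(h)=\sup\{\alpha(s):0\leq s\leq|h|,\ s\text{ simple}\}$. This yields $gw\in L^{1}$ for every $g\in L^{\alpha}$; approximating such $g$ in $\alpha$-norm by $f_n\in L^{\infty}$ then gives $\int f_n w\to\int gw$ and $\varphi(f_n)\to\varphi(g)$, hence $\varphi(g)=\int gw\,d\mu$.

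Part (3) is immediate: every $w\in L^{\alpha'}\subseteq\mathcal{L}^{\alpha'}$ defines $\varphi_w(g)=\int gw\,d\mu$, which by construction is one of the defining seminorms of $\sigma(L^{\alpha},\mathcal{L}^{\alpha'})$ and hence weakly continuous on $L^{\alpha}$. The main obstacle I anticipate is the extension step in (2b): the dual norm $\alpha'$ is defined only as a supremum over bounded test functions, so the H\"older-type inequality must be promoted to unbounded $h\in L^{\alpha}$, and the passage from $f_n$ to $g$ inside the integral requires careful combination of this extended inequality with the gauge property $\alpha(|f|)=\alpha(f)$ and the domination $\|\cdot\|_{1,\mu}\lesssim\alpha(\cdot)$ to ensure almost-everywhere control along a subsequence.
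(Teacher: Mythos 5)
Your argument is correct and follows the same overall architecture as the paper's proof, but two of the four steps are routed differently, and in both cases your route is tighter. For (1) the paper simply asserts the representation "by the definition" of the $\sigma(L^{\alpha},\mathcal{L}^{\alpha'})$-topology; you make this precise via Hahn--Banach extension in the locally convex setting plus the standard identification of the dual of $(X,\sigma(X,Y))$ with $Y$, then use $\mathcal{L}^{\alpha'}\subset L^{1}$, which is exactly the intended content. For (2a) the paper takes the supremum over simple functions $s$ with $\Vert s\Vert_{\infty}\le 1$, while you test directly against the unimodular phase $\bar u$ with $\alpha(\bar u)\le\Vert\bar u\Vert_{\infty}=1$; these are equivalent, but you also actually prove the equality $\Vert\varphi_{w}\Vert=\Vert\varphi_{|w|}\Vert$ via the two symmetric estimates, a point the paper states and leaves hanging ("we will see"). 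The genuine divergence is in (2b): the paper writes $f=u|f|$, picks an increasing sequence $s_{n}\uparrow|f|$ a.e., claims $\alpha(\bar v s_{n}-\bar v|f|)\to 0$ so that $\varphi(\bar v s_{n})\to\varphi(\bar v|f|)$, and then uses monotone convergence on the integral side; the asserted $\alpha$-norm convergence of an a.e.\ increasing approximating sequence is not justified there. You instead first promote the H\"older-type bound $\int|hw|\,d\mu\le\alpha'(w)\alpha(h)$ from bounded $h$ to all $h$ with $\alpha(h)<\infty$ by monotone convergence against simple functions, which immediately gives $wL^{\alpha}\subseteq L^{1}$ and makes the passage from $L^{\infty}$ to $L^{\alpha}$ a routine norm-density argument; this uses only the definition of $\alpha'$ and the extension of $\alpha$ to measurable functions, and avoids the paper's questionable convergence claim. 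Part (3) is the same in both treatments. One small caveat: your (2b) quotes part (1) as producing $w\in L^{1}\cap\mathcal{L}^{\alpha'}$, whereas (1) as stated only asserts $w\in L^{1}$; since your own proof of (1) yields $w\in\mathcal{L}^{\alpha'}$ (and in (2b) the functional is already defined on all of $L^{\alpha}$, so the duality identification applies directly without Hahn--Banach), this is harmless, but it would be worth recording $w\in\mathcal{L}^{\alpha'}$ explicitly in (1) so that the $\alpha'(w)<\infty$ used in your H\"older step is on the page.
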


\begin{proof}
	For (1), It's easy to check by the definition of $\sigma(L^{\alpha}\left(
	\mathbb{T} \right),\mathcal{L}^{\alpha^{'}}\left(
	\mathbb{T} \right))$-continuous linear functional\\
	For (2a), From (1), we have
	\[
	\left\Vert w\right\Vert _{1,\mu}=\sup\left\{  \left\vert \int_{\mathbb{T}}%
	wsd\mu\right\vert :s\; \text{is simple,}\; \left\Vert s\right\Vert _{\infty
	}\leq1\right\}
	\]%
	\[
	=\sup\left\{  \left\vert \varphi\left(  s\right)  \right\vert :s\;
	\text{simple,}\; \left\Vert s\right\Vert _{\infty}\leq1\right\}
	\leq\left\Vert \varphi\right\Vert \text{.}%
	\]
 We will see $\left\Vert w\right\Vert _{1,\mu}\leq\left\Vert
	\varphi\right\Vert .$
	
	(2b) Suppose $f\in L^{\alpha}(\mathbb{T},\mu)$, $f=u|f|,|u|=1$. $|f|\in L^{\alpha
}(\mathbb{T},\mu)$. There exists an increasing positive sequence $s_{n}$ such
that $s_{n}\rightarrow|f|$ a.e. $(\mu)$, thus $us_{n}\rightarrow u|f|$
a.e.$(\mu)$. $\forall w\in L^{1}(\mathbb{T},\mu),w=v|w|$, where $|v|=1$, so we
have $\overline{v}s_{n}\rightarrow\overline{v}|f|$ a.e. $(\mu)$, where
$\overline{v}$ is the conjugate of $v$ and $\alpha(\overline{v}s_{n}%
-\overline{v}|f|)\rightarrow0$. Thus we have $\varphi(\overline{v}s_{n})\rightarrow
\varphi(\overline{v}|f|)$. On the other hand, we also have $\varphi
(\overline{v}s_{n})=\int_{\mathbb{T}}\overline{v}s_{n}wd\mu\rightarrow
\int_{\mathbb{T}}\overline{v}|f|wd\mu=\int_{\mathbb{T}}|f||w|d\mu$ by monotone
convergence theorem. Thus $\int_{\mathbb{T}}|f||w|d\mu=\int_{\mathbb{T}
}|f|\overline{v}wd\mu=\varphi(\overline{v}|f|)<\infty$, therefore $fw\in
L^{1}(\mathbb{T},\mu)$, i.e., $wL^{\alpha}(\mathbb{T},\mu)\subseteq
L^{1}(\mathbb{T},\mu)$, where $w\in L^{1}(\mathbb{T},\mu)$.\\
For (3), By (2b) we know that if $\varphi \in L^{\alpha}(\mathbb{T},\mu) $, then there exists $w \in L^{1} (\mathbb{T},\mu)$ such that $\varphi(f)=\varphi_{w}(f), \forall f\in L^{\infty}(\mathbb{T},\mu)$. By (1),
$\varphi(f)=\varphi_{w}(f)$ implies $\varphi$ is an $\sigma(L^{\alpha}\left(
\mathbb{T} \right),\mathcal{L}^{\alpha^{'}}\left(
\mathbb{T} \right))$-continuous linear functional.

\end{proof}

\section{The Extension of Beurling Theorem in Communtative von Neumann Algebras }
Let $\alpha$ be a $\Vert \cdot \Vert_{1}$-dominating normalized gauge norm on $L^{\infty} \left(
\mathbb{T},\mu\right) $. We define $H^{\alpha}(\mathbb{T},\mu)=\overline {H^{\infty}(\mathbb{T},\mu)}^{\sigma(L^{\alpha}\left(
	\mathbb{T} \right),\mathcal{L}^{\alpha^{'}}\left(
	\mathbb{T} \right))}\cap L^{\alpha}(\mathbb{T},\mu)$, from the definition, we first extend the classical $L^{p}(\mathbb{T},\mu)$ spaces.

\begin{example}
	If we take $\alpha$ to be $p$-norm, then $H^{p}(\mathbb{T},\mu)=\overline {H^{\infty}(\mathbb{T},\mu)}^{\sigma(L^{p}\left(
		\mathbb{T} \right),\mathcal{L}^{q}\left(
		\mathbb{T} \right))} \cap L^{p}(\mathbb{T},\mu)$.
\end{example}

In addition, in the classical Hardy space, we have  $H^{p}(\mathbb{T},\mu) =H^{1} (\mathbb{T},\mu) \cap L^{p}(\mathbb{T},\mu)\text{,}$ now we have similar result in the following theorem.
	\begin{theorem}
		\label{N8} $H^{\alpha}(\mathbb{T},\mu) =H^{1} (\mathbb{T},\mu) \cap L^{\alpha}(\mathbb{T},\mu)\text{.}$
	\end{theorem}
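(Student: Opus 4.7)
The plan is to establish the two inclusions separately. The forward inclusion $H^\alpha(\mathbb{T},\mu) \subseteq H^1(\mathbb{T},\mu) \cap L^\alpha(\mathbb{T},\mu)$ is a direct weak-$\ast$ argument using the pairing with monomials in $\mathcal{L}^{\alpha'}$. The reverse inclusion uses a Fejer-mean approximation adapted to the $\alpha$-norm setting, combined with the density of $L^\infty$ in $L^\alpha$ and the Holder-type bound $|\int \varphi\psi\, d\mu| \leq \alpha(\varphi)\alpha'(\psi)$.

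For the forward direction, note that $H^\alpha \subseteq L^\alpha \subseteq L^1$ by the definition of $H^\alpha$ and Lemma 2.1. To see $H^\alpha \subseteq H^1$, write $f \in H^\alpha$ as a $\sigma(L^\alpha,\mathcal{L}^{\alpha'})$-limit of a net $\{f_\lambda\} \subseteq H^\infty$. For every integer $k \geq 1$, $z^k \in L^\infty \subseteq \mathcal{L}^{\alpha'}$, so
\[
\int_{\mathbb{T}} f\, z^k\, d\mu = \lim_\lambda \int_{\mathbb{T}} f_\lambda\, z^k\, d\mu = 0,
\]
which forces every negative Fourier coefficient of $f$ to vanish, hence $f \in H^1$.

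For the reverse direction, take $f \in H^1 \cap L^\alpha$ and set $\sigma_n(f) = K_n \ast f$, where $K_n$ is the Fejer kernel. Because $f \in H^1$, each $\sigma_n(f)$ is an analytic trigonometric polynomial and thus lies in $H^\infty$. It suffices to show $\sigma_n(f) \to f$ in $\sigma(L^\alpha,\mathcal{L}^{\alpha'})$, since $f$ already belongs to $L^\alpha$. Fix $g \in \mathcal{L}^{\alpha'}$ and $\varepsilon > 0$; using the definition of $L^\alpha$ as the $\alpha$-closure of $L^\infty$, choose $h \in L^\infty$ with $\alpha(f - h) < \varepsilon$ and decompose
\[
\int_{\mathbb{T}} (\sigma_n(f) - f)\, g\, d\mu = \int_{\mathbb{T}} (\sigma_n(h) - h)\, g\, d\mu + \int_{\mathbb{T}} (\sigma_n(f - h) - (f - h))\, g\, d\mu.
\]
Since $K_n \geq 0$ and $\int K_n\, d\mu = 1$, convolution with $K_n$ is an $\alpha$-contraction, so the second integral is bounded in modulus by $2\alpha(f-h)\alpha'(g) < 2\varepsilon\alpha'(g)$ by the Holder-type estimate. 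For the first integral, $|\sigma_n(h)| \leq \|h\|_\infty$ uniformly and $\sigma_n(h) \to h$ almost everywhere by Fejer's theorem, so dominated convergence (with $g \in L^1$ because $\mathcal{L}^{\alpha'} \subseteq L^1$) sends it to $0$ as $n \to \infty$. Letting $\varepsilon \downarrow 0$ shows $\sigma_n(f) \to f$ weakly, whence $f \in H^\alpha$.

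The one delicate step is the $\alpha$-contractivity of convolution with $K_n$, which relies on translation invariance of the norm $\alpha$. I would derive it by applying the Minkowski integral inequality to
\[
(K_n \ast \varphi)(e^{i\theta}) = \int_{\mathbb{T}} K_n(e^{i\phi})\, \varphi(e^{i(\theta - \phi)})\, d\mu(\phi),
\]
which, combined with $\alpha(\varphi(e^{i(\cdot - \phi)})) = \alpha(\varphi)$ and $\int K_n\, d\mu = 1$, gives $\alpha(K_n \ast \varphi) \leq \alpha(\varphi)$, exactly as in the classical proof for $L^p$ norms.
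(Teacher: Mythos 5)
Your forward inclusion is fine and is essentially the paper's argument: pair the approximating net from $H^{\infty}$ against the monomials $z^{k}\in L^{\infty}\subseteq\mathcal{L}^{\alpha'}$ and conclude that the negative Fourier coefficients of the limit vanish. The problem is in the reverse inclusion, specifically in the step you yourself flag as delicate: the claim that convolution with the Fej\'er kernel is an $\alpha$-contraction, $\alpha(K_{n}\ast\varphi)\leq\alpha(\varphi)$. Your derivation uses $\alpha\bigl(\varphi(e^{i(\cdot-\phi)})\bigr)=\alpha(\varphi)$, i.e.\ rotation (translation) invariance of $\alpha$. That is not among the hypotheses. A normalized gauge norm is only required to satisfy $\alpha(|f|)=\alpha(f)$, which is invariance under multiplication by unimodular functions, not invariance under composition with rotations of $\mathbb{T}$; the whole point of this class of norms (here and in Chen's and Fan--Hadwin--Liu's earlier work) is that no rearrangement or rotation symmetry is assumed. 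For a non-rotation-invariant gauge norm (e.g.\ one built by weighting a fixed Borel set $E$, such as a normalization of $\|f\|_{1}+\|f\chi_{E}\|_{\infty}$), a translate of $\varphi$, and likewise $K_{n}\ast\varphi$ for fixed $n$, can have $\alpha$-norm far larger than $\alpha(\varphi)$, so neither contractivity nor even a bound uniform in $n$ is available. Since your estimate of the second integral needs $\alpha(\sigma_{n}(f-h))\lesssim\alpha(f-h)$ uniformly in $n$, the argument breaks down at exactly this point; as written it proves the theorem only for rotation-invariant $\alpha$, a strictly smaller class than the one in the statement.

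The paper avoids this entirely by a duality argument for the reverse inclusion: if $f\in L^{\alpha}\setminus H^{\alpha}$, Hahn--Banach gives a $\sigma(L^{\alpha},L^{\alpha'})$-continuous functional annihilating $H^{\alpha}$, represented by some $g\in L^{\alpha'}\subseteq L^{1}$; annihilation of $H^{\infty}$ forces $\int_{\mathbb{T}}gz^{n}\,d\mu=0$ for all $n\geq0$, so $g\in H^{1}$ with $g(0)=0$, and then for any $w\in H^{1}\cap L^{\alpha}$ the product $wg$ lies in $L^{1}$ (by the $\alpha$--$\alpha'$ pairing) and is analytic, giving $\int_{\mathbb{T}}wg\,d\mu=w(0)g(0)=0$; hence every such functional kills $w$ and $w\in H^{\alpha}$. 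If you want to salvage your approach, you would either have to add rotation invariance as a hypothesis or replace the Fej\'er-mean approximation by an argument, like this duality one, that uses only the gauge property and $\|\cdot\|_{1}$-domination.
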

	
	\begin{proof}
		
		By the definition of $	H^{\alpha} $, we know that
		\[
		H^{\alpha} =\overline{H^{\infty} (\mu)}^{\sigma(L^{\alpha}\left(
			\mathbb{T} \right),L^{\alpha^{'}}\left(
			\mathbb{T} \right))} \cap L^{\alpha}(\mathbb{T},\mu) \subset\overline
		{L^{\infty}}^{\sigma(L^{\alpha}\left(
			\mathbb{T} \right),L^{\alpha^{'}}\left(
			\mathbb{T} \right))} = L^{\alpha}(\mathbb{T},\mu)\text{.}%
		\]

	For every $f\in H^{\alpha} =\overline{H^{\infty} (\mu)}^{\sigma(L^{\alpha}\left(
		\mathbb{T} \right),L^{\alpha^{'}}\left(
		\mathbb{T} \right))} \cap L^{\alpha}(\mathbb{T},\mu)\subsetneq L^{1}(\mathbb{T},\mu) $, there is a sequence ${f_{n}}$ in $H^{\infty}$ such that  $f_{n}\rightarrow f $ in $\sigma(L^{\alpha}\left(
	\mathbb{T} \right),L^{\alpha^{'}}\left(
	\mathbb{T} \right)) $ topology. Thus, for every $g\in L^{\alpha^{'}\left(
		\mathbb{T} \right)}$,
	$\int_{\mathbb{T}} (f_{n}g) d\mu
	\rightarrow \int_{\mathbb{T}}(fg)d\mu$.
	Therefore, we have
	
	\[c_{-m}=\int_{\mathbb{T}}fz^{m}d\mu=\lim_{n\rightarrow \infty} \int_{\mathbb{T}}f_{n}z^{m}d\mu=\lim_{n\rightarrow \infty}c_{-mn}=\lim_{n\rightarrow \infty}0=0, m\geq 0\]
	
	So $f\in  H^{1}(\mathbb{T},\mu) $. Thus $H^{\alpha}(\mathbb{T},\mu) \subseteq H^{1}(\mathbb{T},\mu) \cap L^{\alpha}(\mathbb{T},\mu)\text{.}$\\
	
    Now since $H^{\alpha}(\mathbb{T},\mu)$ is an $\sigma(L^{\alpha}\left(
    \mathbb{T} \right),L^{\alpha^{'}}\left(
    \mathbb{T} \right))$-closed subspace of $L^{\alpha}(\mathbb{T},\mu)$, for every $f \in L^{\alpha}(\mathbb{T},\mu)$ and $f \notin H^{\alpha}(\mathbb{T},\mu)$, there is a $\sigma(L^{\alpha}\left(
    \mathbb{T} \right),L^{\alpha^{'}}\left(
    \mathbb{T} \right))$-continuous functional $\varphi$ on $L^{\alpha}(\mathbb{T},\mu)$ such that
    $\varphi(H^{\alpha}(\mathbb{T},\mu))=0$ and $\varphi(f)\neq 0$. Also, there is a $g \in L^{\alpha^{'}}(\mathbb{T},\mu)$ such that $\varphi(h)=\int_{\mathbb{T}}hgd\mu$ for all $h\in L^{\alpha}(\mathbb{T},\mu)$ . And we know   $g \in L^{\alpha^{'}}(\mathbb{T},\mu)\subset L^{1}(\mathbb{T},\mu) $, so we can write $g(z)=\sum_{n=-\infty}^{\infty}c_{n}z^{n}$. Since $\phi(H^{\alpha}(\mathbb{T},\mu))=0$,we have
    \[c_{-n}=\int_{\mathbb{T}}gz^{n}d\mu=\varphi(z^{n})=0, n\geq0 .\]
    Thus $g$ is analytic and $g(0)=0$.\\
    Take $w\in H^{1} (\mathbb{T},\mu) \cap L^{\alpha}(\mathbb{T},\mu)$,then $wg$ is analytic and $wg\in L^{1}(\mathbb{T},\mu)$. Hence
    \[\varphi(w)=\int_{\mathbb{T}}wgd\mu=w(0)g(0)=0\]
    Since for every $f \in L^{\alpha}(\mathbb{T},\mu)$ and $f \notin H^{\alpha}(\mathbb{T},\mu)$, there is a $\varphi$ is $\sigma(L^{\alpha}\left(
    \mathbb{T} \right),L^{\alpha^{'}}\left(
    \mathbb{T} \right))$-continuous functional on $L^{\alpha}(\mathbb{T},\mu)$ such that
    $\varphi(H^{\alpha}(\mathbb{T},\mu))=0$ and $\varphi(f)\neq 0$, $w\in H^{\alpha}(\mathbb{T},\mu) $ by Hahn-Banach theorem, which implies $H^{1} (\mathbb{T},\mu) \cap L^{\alpha}(\mathbb{T},\mu)\subset H^{\alpha}(\mathbb{T},\mu)$.
    	
	\end{proof}

	\begin{lemma} \label{including} Let $\alpha$ be a $\Vert \cdot \Vert_{1}$-dominating normalized gauge norm on $L^{\infty} \left(
		\mathbb{T},\mu\right) $. If $W$ is an $\sigma(L^{\alpha}\left(
		\mathbb{T} \right),L^{\alpha^{'}}\left(
		\mathbb{T} \right))$-closed linear subspace of $L^{\alpha}(\mathbb{T},\mu)$ with $zW\subseteq W$, then $H^{\infty}(\mu)W\subset W$.
		
	\end{lemma}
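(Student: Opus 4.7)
The plan is to exploit the invariance $zW \subseteq W$ to show $p(z)W \subseteq W$ for every analytic polynomial $p$, and then approximate an arbitrary $h \in H^\infty$ by analytic polynomials in a way that converges in the $\sigma(L^\alpha,L^{\alpha'})$-topology after multiplication by a fixed $f \in W$. Since $W$ is $\sigma(L^\alpha,L^{\alpha'})$-closed, this forces $hf \in W$.

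More concretely, I would first note that since $W$ is a linear subspace and $zW \subseteq W$, iterating gives $z^n W \subseteq W$ for every $n \geq 0$, hence $p(z) W \subseteq W$ for every analytic polynomial $p$. For the extension to $H^\infty$, fix $h \in H^\infty(\mathbb{T},\mu)$ and take $p_n$ to be the Fej\'er (Ces\`aro) means of the Fourier series of $h$; these are analytic polynomials with $\|p_n\|_\infty \leq \|h\|_\infty$, they converge to $h$ almost everywhere (at every Lebesgue point of $h$), and by dominated convergence $p_n \to h$ weak-$*$ in $L^\infty(\mathbb{T},\mu)$.

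Next, fix $f \in W \subseteq L^\alpha(\mathbb{T},\mu)$ and $g \in L^{\alpha'}(\mathbb{T},\mu)$. The crucial input is Lemma \ref{dual}(2b), which guarantees $fg \in L^1(\mathbb{T},\mu)$ because $\varphi_g$ is $\sigma(L^\alpha,\mathcal{L}^{\alpha'})$-continuous on $L^\alpha$. Testing $p_n f$ against $g$ gives
\[
\int_{\mathbb{T}} (p_n f)\, g\, d\mu = \int_{\mathbb{T}} p_n (fg)\, d\mu \longrightarrow \int_{\mathbb{T}} h(fg)\, d\mu = \int_{\mathbb{T}} (hf)\, g\, d\mu,
\]
using the weak-$*$ convergence $p_n \to h$ against the $L^1$-function $fg$. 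Since $g \in L^{\alpha'}$ was arbitrary, $p_n f \to hf$ in the $\sigma(L^\alpha,L^{\alpha'})$-topology. Also $hf \in L^\alpha$, because by Lemma 2.1(2) one has $\alpha(hf) \leq \|h\|_\infty \alpha(f) < \infty$ and the product stays in the $\alpha$-closure of $L^\infty$. Because each $p_n f$ lies in $W$ by Step 1 and $W$ is $\sigma(L^\alpha,L^{\alpha'})$-closed, the limit $hf$ lies in $W$, yielding $H^\infty(\mu) W \subseteq W$.

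The only place where a genuine decision is needed is the choice of approximating polynomials, and the main technical obstacle is justifying the passage from weak-$*$ convergence of the polynomials in $L^\infty$ to weak convergence in the $\sigma(L^\alpha,L^{\alpha'})$-topology after multiplication by $f$. This step rests entirely on the Hölder-type fact that $L^\alpha \cdot L^{\alpha'} \subseteq L^1$, which is already available from Lemma \ref{dual}. Everything else is routine linear-algebra and closure manipulations.
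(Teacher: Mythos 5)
Your proposal is correct and follows essentially the same route as the paper's own proof: polynomial invariance from $zW\subseteq W$, Fej\'er/Ces\`aro means of $h\in H^{\infty}$ converging weak* in $L^{\infty}$, the pairing $L^{\alpha}\cdot L^{\alpha'}\subseteq L^{1}$ supplied by Lemma \ref{dual}(2b) to upgrade this to $\sigma(L^{\alpha},L^{\alpha'})$-convergence of $p_n f$ to $hf$, and then the $\sigma(L^{\alpha},L^{\alpha'})$-closedness of $W$. Your additional check that $hf\in L^{\alpha}$ via $\alpha(hf)\leq \|h\|_{\infty}\,\alpha(f)$ is a small detail the paper leaves implicit, but the argument is otherwise the same.
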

	
	\begin{proof}
	
	Let $P^{+}=\{ e_n: n\in \mathbb{N}\}$ denote the class of all polynomials in $H^{\infty}(\mathbb{T},\mu)$, where $e_{n}(z)=z^{n}$ for all $z$ in the unit circle $\mathbb{T}$. Since $zW\subseteq W$ , we see $p(z)W\subseteq W$ for any polynomial $p\in P^{+}$. To complete the proof, it suffices to show that $fh\in W$ for every $h\in W$and every $f\in H^{\infty}(\mathbb{T},\mu)$. Now we assume that $u$ is a nonzero element in $L^{\alpha^{'}}(\mathbb{T},\mu)$, then it follows from lemma \ref{dual} (2b)
    $hu\in WL^{\alpha^{'}}(\mathbb{T},\mu)\subset L^{\alpha}\left(
    \mathbb{T} \right)L^{\alpha^{'}}\left(
    \mathbb{T} \right)\subset L^{1}\left(
    \mathbb{T} \right)$. Since $f\in H^{\infty}$, define $\varphi(h)=\int_{\mathbb{T}}hgd\mu$ for all $h\in L^{\alpha}(\mathbb{T},\mu)$ , now we have $c_{-n}=\int_{\mathbb{T}}fz^{n}d\mu=\varphi(z^{n})=0$, for all $n>0$, which implies that the partial sums $S_{n}(f)=\sum_{-n}^{n}c_{n}e^{n}=\sum_{0}^{n}c_{n}e^{n}\in P+$ for all $n>0$.
    Hence the Cesaro means 
    \[\sigma_{n}(f)=\frac{S_{0}(f)+S_{1}(f)+...+S_{n}(f)}{n+1}\in P^{+} \]
    Moreover, we know that $\sigma_{n}(f)\rightarrow f$ in the weak* topology. Since $hu\in L^{1}\left(
   \mathbb{T} \right)$ we have
   \[\int_{\mathbb{T}}\sigma_{n}(f)hu d\mu \rightarrow \int_{\mathbb{T}}fhu d\mu \]
   Observe that $\sigma_{n}(f)h\in P^{+}W\subset W$ and $u\in L^{\alpha^{'}}\left(
   \mathbb{T} \right)$, it follows that $\sigma_{n}(f)h\rightarrow fh$ in $\sigma(L^{\alpha}\left(
   \mathbb{T} \right),L^{\alpha^{'}}\left(
   \mathbb{T} \right))$ topology. Since $W$ is $\sigma(L^{\alpha}\left(
   \mathbb{T} \right),L^{\alpha^{'}}\left(
   \mathbb{T} \right))$-closed, $fh\in W$. This completes the proof.
    \end{proof}	
	
    A key ingredient is based on the following result that uses the Herglotz kernel in \cite{chen1}.
	\begin{lemma}
		$ \{|h|: 0\neq h\in H^{1}(\mathbb{T},\mu)\}=\{f\in L^{1}(\mathbb{T},\mu): f\geq 0$ and $ \log f \in L^{1}(\mathbb{T},\mu) \} $ , In fact, if $f \geq 0$ and $\phi, \log \phi \in L^{1}(\mathbb{T},\mu)$, then \\
	\[ f(z)=exp{\int_{\mathbb{T}}\frac{w+z}{w-z}\log f(w) d\mu(w) }\]
	
	defines an outer function $h$ on $\mathbb{D}$ and $|h|=f$ on $\mathbb{T}$.
  \end{lemma}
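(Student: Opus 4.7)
The plan is to prove the two inclusions separately. For the inclusion $\{|h|: 0\neq h\in H^{1}\}\subseteq \{f\geq 0: \log f\in L^{1}\}$, I would invoke the classical fact from Hardy space theory that a nonzero $H^{1}$ function $h$ has integrable log-modulus: by Jensen's inequality applied to any factorization $h = Bg$ where $B$ is a Blaschke product and $g$ is zero-free (or more directly, by the fact that $\log|h(0)| \leq \int \log|h|\,d\mu$ when $h(0)\neq 0$, combined with a rotation/normalization argument to reduce to that case). Since $|h| \in L^{1}$ trivially and $\log|h| \in L^{1}$, this direction is standard.

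For the reverse inclusion, given $f\geq 0$ with $f, \log f \in L^{1}(\mathbb{T},\mu)$, I would define
\[
h(z) = \exp\!\left(\int_{\mathbb{T}} \frac{w+z}{w-z}\,\log f(w)\,d\mu(w)\right), \qquad z\in \mathbb{D}.
\]
First, because the Herglotz kernel $\frac{w+z}{w-z}$ is holomorphic in $z$ on $\mathbb{D}$ for each $w\in\mathbb{T}$ and is dominated locally by an integrable function of $w$ (since $\log f \in L^{1}$), differentiation under the integral sign shows the exponent is holomorphic in $z$; hence $h$ is holomorphic and zero-free on $\mathbb{D}$. Next, taking real parts recovers the Poisson kernel:
\[
\log|h(z)| = \int_{\mathbb{T}} P(w,z)\,\log f(w)\,d\mu(w),
\]
so $\log|h|$ is the Poisson integral of $\log f \in L^{1}$. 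By the classical Fatou theorem on radial limits of Poisson integrals of $L^{1}$ functions, $\log|h(re^{i\theta})| \to \log f(e^{i\theta})$ a.e., whence $|h| \to f$ a.e.\ on $\mathbb{T}$.

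To upgrade $h$ to a genuine $H^{1}$ function (so it represents an element of $H^{1}(\mathbb{T},\mu)$ via boundary values), I would apply Jensen's inequality to the convex function $\exp$ against the Poisson probability measure: for each $r<1$,
\[
|h(re^{i\theta})| = \exp\!\left(\int P(w, re^{i\theta})\log f(w)\,d\mu(w)\right) \leq \int P(w, re^{i\theta}) f(w)\,d\mu(w) = P_{r}[f](\theta).
\]
Integrating in $\theta$ gives $\int_{\mathbb{T}}|h(re^{i\theta})|\,d\mu \leq \|f\|_{1}$ uniformly in $r$, so $h \in H^{1}(\mathbb{D})$ and its boundary function, which equals $f$ in modulus a.e., lies in $H^{1}(\mathbb{T},\mu)$. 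Outerness of $h$ is then automatic from its construction as the exponential of the Herglotz integral of a real $L^{1}$ function.

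The main obstacle I expect is not conceptual but technical bookkeeping: verifying the boundary behavior $|h|=f$ a.e.\ rigorously via Fatou's theorem on Poisson integrals, and making sure the Jensen inequality estimate is applied correctly to conclude membership in $H^{1}$. The easier direction only requires citing (or briefly reproving) the integrability of $\log|h|$ for $h \in H^{1}\setminus\{0\}$, which is standard from the subharmonicity of $\log|h|$ together with a normalization ensuring $h(0)\neq 0$ (achieved by dividing out the zero at the origin if necessary).
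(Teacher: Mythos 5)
Your proof is correct, and it follows exactly the route the paper intends: the lemma is stated here without proof and attributed to Chen \cite{chen1}, whose argument is precisely the Herglotz-kernel construction you carry out (holomorphy and zero-freeness of $h=\exp\bigl(\int_{\mathbb{T}}\frac{w+z}{w-z}\log f\,d\mu\bigr)$, $\log|h|$ as the Poisson integral of $\log f$ with Fatou boundary values, and the Jensen/arithmetic--geometric mean bound $\int_{\mathbb{T}}|h(re^{i\theta})|\,d\mu\leq\|f\|_{1}$ giving $h\in H^{1}$ with outerness built in). The forward inclusion via $\log^{+}|h|\leq|h|\in L^{1}$ together with the mean-value lower bound $\log|h(0)|\leq\int_{\mathbb{T}}\log|h|\,d\mu$ (after dividing out a zero at the origin) is likewise the standard and correct argument, so there is nothing to add beyond the bookkeeping you already identify.
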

	
	The following result is a factorization theorem for $L^{\alpha}(\mathbb{T},\mu)\text{.}$
	
	\begin{theorem}
		\label{factorization1} Let $\alpha$ be a $\Vert \cdot \Vert_{1}$-dominating normalized gauge norm on $L^{\infty} \left(
		\mathbb{T},\mu\right) $. If $k \in L^{\infty}(\mathbb{T},\mu)$, $k^{ -1} \in$ $L^{\alpha}(\mathbb{T},\mu)$, then
		there is a unimodular function $u \in L^{\infty}(\mathbb{T},\mu)$ and an outer function $s \in
		H^{\infty}(\mathbb{T},\mu)$ such that $k =us$ and $s^{ -1} \in$ $H^{\alpha}(\mathbb{T},\mu)$.
	\end{theorem}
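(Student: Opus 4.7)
The strategy is to use the Herglotz-kernel lemma quoted just before the theorem to construct $s$ as an outer function with prescribed modulus $|s|=|k|$, set $u=k/s$, and then verify $s^{-1}\in H^{\alpha}$ by combining a second application of that lemma with Theorem \ref{N8}.

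First I would check that $\log|k|\in L^{1}(\mathbb{T},\mu)$. The bound $|k|\le\|k\|_{\infty}$ makes $\log^{+}|k|$ bounded, while $k^{-1}\in L^{\alpha}\subset L^{1}$ gives $\log^{-}|k|=\log^{+}(1/|k|)\le 1/|k|\in L^{1}$. The preceding lemma then supplies an outer function $s$ on $\mathbb{D}$ with $|s|=|k|$ a.e.\ on $\mathbb{T}$. Since an outer function equals the exponential of the Poisson integral of $\log|s|$, the uniform bound $\log|s|\le\log\|k\|_{\infty}$ gives $s\in H^{\infty}$ with $\|s\|_{\infty}\le\|k\|_{\infty}$. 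Because $k^{-1}\in L^{\alpha}$ forces $k\ne 0$ a.e., the formula $u:=k/s$ defines a unimodular function in $L^{\infty}$ with $k=us$.

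It remains to show $s^{-1}\in H^{\alpha}$. By Theorem \ref{N8}, $H^{\alpha}=H^{1}\cap L^{\alpha}$, so I would verify these two memberships separately. For $s^{-1}\in H^{1}$, apply the Herglotz-kernel lemma to $1/|k|$: indeed $1/|k|=|k^{-1}|\in L^{1}$ and $\log(1/|k|)=-\log|k|\in L^{1}$, producing an outer $t\in H^{1}$ with $|t|=1/|k|$. Then $st$ is an outer function in $H^{1}$ with unimodular boundary values, hence a unimodular constant; consequently $s^{-1}$ is a scalar multiple of $t$ and lies in $H^{1}$. For $s^{-1}\in L^{\alpha}$, note $|s^{-1}|=|k^{-1}|$ yields $s^{-1}=wk^{-1}$ for some unimodular $w\in L^{\infty}$. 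Choosing $L^{\infty}$ approximants $t_{n}$ with $\alpha(k^{-1}-t_{n})\to 0$ and using the gauge property, $wt_{n}\in L^{\infty}$ and $\alpha(s^{-1}-wt_{n})=\alpha(|k^{-1}-t_{n}|)=\alpha(k^{-1}-t_{n})\to 0$, so $s^{-1}\in L^{\alpha}$.

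The delicate step is the $H^{1}$ membership of $s^{-1}$: outerness of $s$ gives analyticity of $1/s$ on $\mathbb{D}$ with boundary modulus $1/|k|\in L^{1}$, but this alone does not force $H^{1}$ control of radial growth. The outer-function uniqueness argument above does the work, and one could equivalently exploit the explicit Herglotz formula, since the representation of $s$ uses $\log|k|$ and the corresponding integral with $-\log|k|$ delivers $1/s$ directly.
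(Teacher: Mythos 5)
Your proof is correct and follows essentially the same route as the paper: the Herglotz-kernel lemma combined with the identification $H^{\alpha}(\mathbb{T},\mu)=H^{1}(\mathbb{T},\mu)\cap L^{\alpha}(\mathbb{T},\mu)$ from Theorem \ref{N8}. The only difference is cosmetic: the paper applies the lemma once to $|k^{-1}|$ to get an outer $h\in H^{1}$ and sets $s=h^{-1}$, $u=kh$, so that $s^{-1}=h\in H^{1}$ comes for free, whereas you construct $s$ from $|k|$ and then recover $s^{-1}\in H^{1}$ by a second application of the lemma to $1/|k|$ (or by the explicit formula with $-\log|k|$); the verification that $s^{-1}\in L^{\alpha}$ is the same unimodular-multiplier, gauge-norm argument in both versions.
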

	
	\begin{proof}
		Recall that an outer function is uniquely determined by its absolute boundary
		values, which are necessarily absolutely log integrable. 
		Suppose  $k \in L^{\infty}(\mathbb{T},\mu)$, $k^{ -1} \in$ $L^{\alpha}(\mathbb{T},\mu)$, on the circle we have
		
		\[-\left\vert k\right\vert<-\log|k|= \log|k^{ -1}|\leq |k^{ -1}| \]
		
		It follows from $k \in L^{\infty}(\mathbb{T},\mu)$, and $k^{ -1} \in L^{\alpha}\left(
		\mathbb{T} \right)\subset L^{1}\left(
		\mathbb{T} \right)$ that
		
			\[-\infty < \int_{\mathbb{T}}-|k| d\mu\leq\int_{\mathbb{T}} \log|k^{ -1}| d\mu \leq \int_{\mathbb{T}} |k^{ -1}| d\mu < \infty \]
		
		Hence $|k^{ -1}|$ is log integrable, by lemma???,there is an outer function $h\in H^{1} (\mathbb{T},\mu)  $ such that $|h|=|k^{ -1}|$
		on $\mathbb{T}$. If we let $s=h^{-1}$ and $u=kh$, we know $h$ is outer, $s=h^{-1}$ is analytic on $\mathbb{D}$, also, $|s|=|h^{-1}|=|k|\in L^{\infty}$, so  $s \in
		H^{\infty}$ such that $k =us$ where $u$ is unimodular. Further, since
		$h\in H^{1} \left(  \mu\right)  $ and $uk^{-1}\in L^{\alpha}(\mathbb{T},\mu) $ , it follows from ??? that $s^{-1}=h=uk^{-1}\in H^{1} (\mathbb{T},\mu)
		\cap L^{\alpha}(\mathbb{T},\mu) =H^{\alpha}(\mathbb{T},\mu)$.
		
	\end{proof}

	We let $\mathbb{B} =\{f \in L^{\infty}(\mathbb{T},\mu) :\Vert f\Vert_{\infty} \leq1\}$ denote
	the closed unit ball in $L^{\infty} (\mathbb{T},\mu)$.
	
	\begin{lemma}
		\label{coincide}Let $\alpha$ be a $\Vert \cdot \Vert_{1}$-dominating normalized gauge norm on $L^{\infty} \left(
		\mathbb{T},\mu\right) $.
		Then $\mathbb{B}=\{f\in L^{\infty}(\mu):\Vert
		f\Vert_{\infty}\leq1\}$ is $\alpha$-closed.
	\end{lemma}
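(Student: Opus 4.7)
The plan is to show that $\alpha$-limits of functions in the unit ball of $L^\infty$ are bounded by $1$ almost everywhere, exploiting the fact that $\alpha$ dominates $\|\cdot\|_1$.

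First I would take a sequence $\{f_n\} \subset \mathbb{B}$ with $\alpha(f_n - f) \to 0$ for some $f \in L^\alpha(\mathbb{T}, \mu)$, and show $f \in \mathbb{B}$. The key observation is that the hypothesis $\alpha(\cdot) \geq \|\cdot\|_{1,\mu}$, originally stated on $L^\infty(\mathbb{T},\mu)$, extends to all measurable functions via the definition of $\alpha$ as a supremum over dominated simple functions (so for any $g \in \mathcal{L}^\alpha$ with $g \geq 0$, one has $\int g\, d\mu = \sup_s \int s\, d\mu \leq \sup_s \alpha(s) \leq \alpha(g)$). In particular, $L^\alpha \subset L^1$ with $\|g\|_{1,\mu} \leq \alpha(g)$, as already recorded in the earlier lemma.

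Next I would apply this domination to the difference $f_n - f \in L^\alpha(\mathbb{T}, \mu)$, obtaining
\[
\|f_n - f\|_{1,\mu} \leq \alpha(f_n - f) \longrightarrow 0.
\]
Hence $f_n \to f$ in $L^1(\mathbb{T}, \mu)$, and by a standard argument there exists a subsequence $\{f_{n_k}\}$ converging to $f$ pointwise almost everywhere with respect to $\mu$. Since each $f_{n_k} \in \mathbb{B}$, we have $|f_{n_k}(z)| \leq 1$ for $\mu$-a.e.\ $z \in \mathbb{T}$, and passing to the pointwise limit yields $|f(z)| \leq 1$ $\mu$-a.e. Therefore $f \in L^\infty(\mathbb{T}, \mu)$ with $\|f\|_\infty \leq 1$, so $f \in \mathbb{B}$, and $\mathbb{B}$ is $\alpha$-closed.

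There is no real obstacle here; the one subtlety is simply to confirm that the $\|\cdot\|_1$-dominating inequality, which is postulated on $L^\infty$, extends to arbitrary elements of $L^\alpha$ so that it can be applied to $f_n - f$ (which need only lie in $L^\alpha$, not in $L^\infty$). This follows immediately from the way $\alpha$ is extended to measurable functions via monotone suprema of simple functions, combined with monotone convergence for the $L^1$-norm.
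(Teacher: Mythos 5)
Your proof is correct and follows essentially the same route as the paper: use the $\Vert\cdot\Vert_{1}$-domination to pass from $\alpha(f_n-f)\to 0$ to $\Vert f_n-f\Vert_{1}\to 0$, extract an a.e.-convergent subsequence, and conclude $|f|\le 1$ a.e. Your extra remark that the domination inequality extends from $L^{\infty}$ to all of $L^{\alpha}$ via the simple-function supremum is a sensible clarification of a step the paper takes for granted, but it does not change the argument.
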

	
	\begin{proof}
		
		Suppose $\{f_{n}\}$ is a sequence in $\mathbb{B}$, $f\; \in
		L^{\alpha}$ and $\alpha(f_{n} -f) \rightarrow0$. Since $\Vert f\Vert_{1} \leq \alpha(f)$. it follows that $\Vert f_{n} -f\Vert_{1} \rightarrow 0$, which implies that $f_{n} \rightarrow f$ in $\mu
		$-measure. Then there is a subsequence $\{f_{n_{k}}\}$ such that $f_{n_{k}}
		\rightarrow f$ a.e. $\left(  \mu \right)  $. Hence $f \in\mathbb{B}%
		\text{.}$
	\end{proof}

	The following theorem and its corollary relate the closed invariant subspaces
	of $L^{\alpha}(\mathbb{T},\mu)$ to the weak*-closed invariant subspaces of $L^{\infty}$.
	
	The following lemma is the Krein-Smulian theorem from \cite{conway}.
	
	\begin{lemma}
		Let $X$ be a Banach space. A convex set in $X^{\#}$ is weak* closed if and only if its intersection with $\mathbb{B}=\{\phi: \Vert \phi \Vert \leq 1 \}$ is weak* closed.
	\end{lemma}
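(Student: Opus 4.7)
The forward implication is immediate from Banach-Alaoglu: $\mathbb{B}$ is weak*-compact, hence weak*-closed, so if $C$ is weak*-closed then $C \cap \mathbb{B}$ is the intersection of two weak*-closed sets. All content lies in the converse, which I plan to prove via the contrapositive. Given $\phi_0 \in X^{\#} \setminus C$, the goal is to produce a basic weak*-open neighborhood of $\phi_0$ disjoint from $C$. Translating by $-\phi_0$ (a weak*-homeomorphism preserving both convexity and the weak*-closedness of the intersection with the ball, up to enlarging $\mathbb{B}$) reduces matters to $\phi_0 = 0$.

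The heart of the argument is an inductive construction of a null sequence $(x_k) \to 0$ in $X$ whose polar neighborhood
\[ U = \{\phi \in X^{\#} : |\phi(x_k)| < 1 \text{ for all } k\} \]
is disjoint from $C$. I would inductively build finite sets $F_n \subset (1/n)\mathbb{B}_X$ such that, writing $V_n = \bigcap_{m \leq n}\{\phi : |\phi(x)| \leq 1 \text{ for all } x \in F_m\}$, one has $C \cap n\mathbb{B} \cap V_n = \emptyset$. The base case $n=1$ follows from $0 \notin C \cap \mathbb{B}$ together with the weak*-closedness of $C \cap \mathbb{B}$, since the basic weak*-neighborhoods of $0$ are exactly polar sets coming from finite subsets of $X$. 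At the inductive step, $K := C \cap (n+1)\mathbb{B} \cap V_n$ is convex and weak*-compact (lying inside the weak*-compact ball $(n+1)\mathbb{B}$ and arising as an intersection of weak*-closed sets), and it does not contain $0$; Hahn-Banach separation in the pairing $(X^{\#}, X)$, combined with a finite-subcover extraction from the weak*-compactness of $K$, produces finitely many elements of $X$ that can be rescaled to lie in $(1/(n+1))\mathbb{B}_X$, giving $F_{n+1}$. Concatenating the $F_n$ yields the required null sequence.

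The main obstacle is the inductive step: the separating functionals must be honest elements of $X$ (not arbitrary members of $(X^{\#})^{*}$) and must be norm-small enough to form a null sequence. The three ingredients needed are Banach-Alaoglu (for weak*-compactness of balls), the identification of $\sigma(X^{\#}, X)$-continuous linear functionals on $X^{\#}$ with evaluations at points of $X$ (so that separation produces genuine members of $X$), and a finite-cover argument converting pointwise separation of the weak*-compact set $K$ from $0$ into a single polar neighborhood built from finitely many elements. Once these pieces fit together, the null sequence produces one basic weak*-open neighborhood of $0$ that misses $C = \bigcup_n (C \cap n\mathbb{B})$.
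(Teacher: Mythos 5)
The paper gives no proof of this lemma at all: it is the Krein--\v{S}mulian theorem, quoted from \cite{conway}, so your attempt has to be measured against the standard textbook argument, which your outline follows in spirit but breaks in two places. First, the norm bookkeeping of your induction cannot work as written. At the base step you want a finite $F_{1}\subset\mathbb{B}_{X}$ with $C\cap\mathbb{B}\cap\{\phi:|\phi(x)|\le 1,\ x\in F_{1}\}=\emptyset$; but every $\phi$ with $\Vert\phi\Vert\le 1$ automatically satisfies $|\phi(x)|\le 1$ for all $x\in\mathbb{B}_{X}$, so this is impossible unless $C\cap\mathbb{B}=\emptyset$ --- a reduction you never make (it requires the preliminary step: $0\notin C$ and weak*-closedness of $C\cap\mathbb{B}$ give some $\varepsilon>0$ with $\varepsilon\mathbb{B}\cap C=\emptyset$, and one then rescales $C$). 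The same obstruction kills your inductive step: any $\phi\in K=C\cap(n+1)\mathbb{B}\cap V_{n}$ has $\Vert\phi\Vert\le n+1$, hence $|\phi(x)|\le 1$ for every $x$ of norm $\le\tfrac{1}{n+1}$, so no choice of $F_{n+1}\subset\tfrac{1}{n+1}\mathbb{B}_{X}$ can exclude $K$; ``rescaling the separating functionals into $\tfrac{1}{n+1}\mathbb{B}_{X}$'' is exactly the move that is unavailable. In the correct scheme the small vectors chosen at stage $n+1$ only have to handle the part of $C\cap(n+2)\mathbb{B}$ with $\Vert\phi\Vert>n+1$ (the rest is already excluded by earlier stages), and their existence comes not from Hahn--Banach plus a subcover but from the finite intersection property: if every finite $F\subset\tfrac{1}{n+1}\mathbb{B}_{X}$ failed, the weak*-compact sets $C\cap(n+2)\mathbb{B}\cap V_{n}\cap\{\phi:|\phi(x)|\le1,\ x\in F\}$ would have a common point, which then has norm $\le n+1$ and contradicts the previous stage. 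Note also that this compactness needs $C\cap r\mathbb{B}$ weak*-closed for \emph{every} $r$; for a general convex set this does not follow from closedness of $C\cap\mathbb{B}$ alone (so the lemma as literally stated is really the subspace case, where $M\cap r\mathbb{B}=r(M\cap\mathbb{B})$, or Conway's version with all balls), and your translation step ``up to enlarging $\mathbb{B}$'' quietly uses the same stronger hypothesis.

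Second, your conclusion does not follow from the construction: the polar $U=\{\phi:|\phi(x_{k})|<1\ \text{for all }k\}$ of an \emph{infinite} null sequence is not a basic weak*-neighborhood of $0$, and in general it is not weak*-open at all (a weak*-neighborhood of $0$ contains a finite-codimensional subspace, which forces all but finitely many constraints to be redundant). So producing the null sequence does not by itself separate $0$ from $C$. The standard finish is needed: since $x_{k}\to 0$ in norm, $T\phi=(\phi(x_{k}))_{k}$ is a bounded linear map $X^{\#}\to c_{0}$; the induction gives $\sup_{k}|\phi(x_{k})|\ge 1$ for every $\phi\in C$, so $T(C)$ is convex and disjoint from the open unit ball of $c_{0}$, and Hahn--Banach in $c_{0}$ produces $f\in\ell^{1}$ with $\mathrm{Re}\sum_{k}f_{k}\phi(x_{k})\ge 1$ on $C$; because $\sum_{k}|f_{k}|\,\Vert x_{k}\Vert<\infty$, the vector $x=\sum_{k}f_{k}x_{k}$ lies in $X$ and $\{\phi:\mathrm{Re}\,\phi(x)<1\}$ is a genuine weak*-open neighborhood of $0$ missing $C$. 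With the rescaling preliminary, the corrected induction, and this $c_{0}$--$\ell^{1}$ step, your outline becomes the classical proof the paper is invoking.
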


	\begin{theorem}
		\label {density1} Let $\alpha$ be a $\Vert \cdot \Vert_{1}$-dominating normalized gauge norm on $L^{\infty} \left(
		\mathbb{T},\mu\right) $.
		Let $M$ be an $\sigma(L^{\alpha}\left(
		\mathbb{T} \right),L^{\alpha^{'}}\left(
		\mathbb{T} \right))$-closed linear subspace of $ L^{\alpha}(\mathbb{T},\mu) $ with $zM\subseteq M$. Then  
		\newline(1) $M \cap L^{\infty}\left(
		\mathbb{T} \right)$ is
		weak*-closed in $L^{\infty}\left(
		\mathbb{T} \right)$, 
		\newline(2) $M=\overline{M\cap
			L^{\infty}\left(
			\mathbb{T} \right)}^{\sigma(L^{\alpha}\left(
			\mathbb{T} \right),L^{\alpha^{'}}\left(
			\mathbb{T} \right))}\text{.}$
	\end{theorem}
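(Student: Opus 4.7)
The plan for (1) is to apply the Krein--Smulian theorem (the lemma above) to the convex set $N := M\cap L^{\infty}(\mathbb{T},\mu)$, viewing $L^{\infty}$ as the dual of $L^{1}$. It then suffices to show $N\cap \mathbb{B}$ is weak*-closed in $L^{\infty}$. Given a net $\{f_{\lambda}\}\subseteq N\cap \mathbb{B}$ with $f_{\lambda}\to f$ weak* in $L^{\infty}$, weak*-closedness of $\mathbb{B}$ gives $f\in\mathbb{B}\subseteq L^{\infty}\subseteq L^{\alpha}$. Since $L^{\alpha'}(\mathbb{T},\mu)\subseteq L^{1}(\mathbb{T},\mu)$, testing against any $g\in L^{\alpha'}$ is a special case of testing against $L^{1}$, so $f_{\lambda}\to f$ also in $\sigma(L^{\alpha},L^{\alpha'})$. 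The assumed $\sigma$-closedness of $M$ then forces $f\in M$, and hence $f\in N\cap\mathbb{B}$, completing (1).

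For (2), the inclusion $\overline{N}^{\,\sigma}\subseteq M$ is immediate from the $\sigma$-closedness of $M$. For the reverse inclusion, given $f\in M$ I will produce an approximating net in $N$ by invoking Theorem \ref{factorization1}. Set $k = 1/(1+|f|)$; then $k\in L^{\infty}$, $0<k\leq 1$, and $k^{-1}=1+|f|\in L^{\alpha}$ (note that $|f|\in L^{\alpha}$: if $f_{n}\to f$ in $\alpha$ with $f_{n}\in L^{\infty}$, then $\alpha(|f_{n}|-|f|)\leq \alpha(f_{n}-f)\to 0$). Theorem \ref{factorization1} yields a unimodular $u\in L^{\infty}$ and an outer $s\in H^{\infty}$ with $k=us$ and $s^{-1}\in H^{\alpha}$. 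Since $|sf|=|k||f|=|f|/(1+|f|)\leq 1$, the product $sf$ lies in $L^{\infty}$; combined with $f\in M$ and Lemma \ref{including} ($H^{\infty}M\subseteq M$), we conclude $sf\in M\cap L^{\infty}=N$.

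To recover $f$ from $sf$, I will use $s^{-1}\in H^{\alpha}$: by the definition of $H^{\alpha}$, there is a net $\{h_{\beta}\}$ in $H^{\infty}$ with $h_{\beta}\to s^{-1}$ in $\sigma(L^{\alpha},L^{\alpha'})$. Each $h_{\beta}(sf)$ lies in $L^{\infty}$ and in $H^{\infty}M\subseteq M$, so $h_{\beta}(sf)\in N$. For any $g\in L^{\alpha'}$, multiplication by the bounded function $sf$ keeps $(sf)g$ in $L^{\alpha'}$ (since $\alpha'(fg)\leq \|f\|_{\infty}\alpha'(g)$ by the analogue of Lemma~2.1(2) for $\alpha'$), so
\[
\int h_{\beta}\cdot sf\cdot g\,d\mu \;=\; \int h_{\beta}\cdot[(sf)g]\,d\mu \;\longrightarrow\; \int s^{-1}\cdot[(sf)g]\,d\mu \;=\; \int f g\,d\mu.
\]
Hence $h_{\beta}(sf)\to f$ in $\sigma(L^{\alpha},L^{\alpha'})$, so $f\in \overline{N}^{\,\sigma}$, finishing (2).

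The main obstacle is the reverse inclusion in (2): one must exhibit a single bounded $H^{\infty}$-multiplier $s$ that simultaneously truncates the arbitrary $f\in M\subseteq L^{\alpha}$ into $L^{\infty}$ \emph{and} has an inverse in $H^{\alpha}$, so that the truncation can be undone through a $\sigma(L^{\alpha},L^{\alpha'})$-limit. This is precisely what the outer-factorization Theorem \ref{factorization1} provides when applied to $k=1/(1+|f|)$, which is why the present theorem is stated \emph{after} that factorization result.
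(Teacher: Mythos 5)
Your proposal is correct and follows essentially the same route as the paper: part (1) via Krein--Smulian together with the observation that $L^{\alpha'}\subseteq L^{1}$ makes weak* convergence imply $\sigma(L^{\alpha},L^{\alpha'})$ convergence, and part (2) by applying Theorem \ref{factorization1} to $k=1/(1+|f|)$, placing $sf$ in $M\cap L^{\infty}$ via Lemma \ref{including}, and undoing the truncation through a net from $H^{\infty}$ converging to $s^{-1}\in H^{\alpha}$. Your added justifications (that $|f|\in L^{\alpha}$ and that $(sf)g\in L^{\alpha'}$) only make explicit what the paper leaves implicit.
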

	
	\begin{proof}
	
		For (1), to prove $M\cap L^{\infty}(\mathbb{T},\mu)$ is weak*-closed in $L^{\infty
		}(\mathbb{T},\mu)$, using the Krein-Smulian theorem, we only need to show that $M\cap
		L^{\infty}(\mathbb{T},\mu)\cap\mathbb{B}$, i.e., $M\cap\mathbb{B}$, is weak*-closed.
		If $\{f_{\lambda }\}$ is a net in
		$M\cap\mathbb{B}$ and $f_{\lambda}\rightarrow f$ weak* in $L^{\infty}(\mathbb{T},\mu)$, then, for every $g\in L^{1}(\mathbb{T},\mu),\int_{\mathbb{T}}(f_{\lambda
		}-f)gd\mu\rightarrow0$. Since $\alpha^{'}\geq\Vert cdot{.}\Vert_{1}$, $\mathbb{L}{\alpha^{'}}(\mathbb{T},\mu)\subset L^{1}(\mathbb{T},\mu)$ and we have
		$f_{\lambda}\rightarrow f$in $ \sigma(L^{\alpha}\left(
		\mathbb{T} \right),L^{\alpha^{'}}\left(
		\mathbb{T} \right))$ topology, so $f\in M
		$. Since $\mathbb{B}$ is weak* closed, $f \in \mathbb{B}$ ,thus $f\in W\cap\mathbb{B}$. Hence $M\cap\mathbb{B}$ is weak*-closed in $L^{\infty}(\mu)$.

	    For (2), since $M$ is $\sigma(L^{\alpha}\left(
		\mathbb{T} \right),L^{\alpha^{'}}\left(
		\mathbb{T} \right))$-closed linear subspace of $L^{\alpha}(\mathbb{T},\mu)$, it is clear
		that $M\supset\overline{M\cap L^{\infty}(\mu)}^{\sigma(L^{\alpha}\left(
			\mathbb{T} \right),L^{\alpha^{'}}\left(
			\mathbb{T} \right))}$. 
		 Suppose $f\in M$
		and let $k=\frac{1}{|f|+1}$. Then $k\in L^{\infty}(\mathbb{T},\mu)$, $k^{-1}\in$
		$L^{\alpha}(\mathbb{T},\mu)\text{.}$ It follows from theorem \ref{factorization1} that there is
		an $s\in H^{\infty}(\mathbb{T},\mu),s^{-1}\in$ $H^{\alpha}(\mathbb{T},\mu)$ and an unimodular function
		$u$ such that $k=us$, so $sf=\overline{u}kf=\overline{u}\frac{f}{|f|+1}\in
		L^{\infty}(\mathbb{T},\mu)$. There is a sequence $\{s_{n}\}$ in $H^{\infty}(\mathbb{T},\mu)$
		such that $s_{n}\rightarrow s^{-1}$ in $\sigma(L^{\alpha}\left(
		\mathbb{T} \right),L^{\alpha^{'}}\left(
		\mathbb{T} \right)) $ topology. For each $n\in\mathbb{N}$, it
		follows from lemma \ref{including} that $s_{n}sf\in H^{\infty}(\mu
		)H^{\infty}(\mu)M\subset M$ and $s_{n}sf\in H^{\infty}(\mu)L^{\infty
		}(\mu)\subset L^{\infty}(\mu)$, which implies that $\{s_{n}sf\}$ is a
		sequence in $M\cap L^{\infty}(\mu)$.
		For every $g\in L^{\alpha^{'}\left(
			\mathbb{T} \right)}$,
		$\int_{\mathbb{T}} (s_{n}sf-f)g d\mu
		=\int_{\mathbb{T}} (s_{n}-s^{-1})sfgd\mu$.
		Since $s_{n}\rightarrow s^{-1}$ in $\sigma(L^{\alpha}\left(
		\mathbb{T} \right),L^{\alpha^{'}}\left(
		\mathbb{T} \right)) $ topology, and $sfg\in L^{\alpha^{'}\left(
			\mathbb{T} \right)}$,
		$\int_{\mathbb{T}}(s_{n}-s^{-1})sfg d\mu\rightarrow 0$.
		Thus $f\in\overline{M\cap
			L^{\infty}(\mu)}^{\sigma(L^{\alpha}\left(
			\mathbb{T} \right),L^{\alpha^{'}}\left(
			\mathbb{T} \right))}$. 
		Therefore $M=\overline{M\cap L^{\infty
			}(\mu)}^{\sigma(L^{\alpha}\left(
			\mathbb{T} \right),L^{\alpha^{'}}\left(
			\mathbb{T} \right))}$.
	\end{proof}
	
		\begin{lemma}
			\label{w*}A weak*-closed linear subspace $M$ of $L^{\infty} (\mathbb{T},\mu)$
			satisfies $z M \subset M$ if and only if $M =\varphi H^{\infty} (\mathbb{T},\mu)$ for
			some unimodular function $\varphi$ or $M =\chi_{E} L^{\infty} (\mathbb{T},\mu)$, for
			some Borel subset $E$ of $\mathbb{T}$.
		\end{lemma}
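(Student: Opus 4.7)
The $(\Leftarrow)$ direction is routine. The subspace $\chi_E L^\infty$ is manifestly $z$-invariant, and it is weak*-closed because it is the annihilator of the weak*-continuous functional $g\mapsto \int (1-\chi_E)g\,d\mu$ applied to all elements of $L^1$. For $\varphi H^\infty$ with $\varphi$ unimodular, multiplication by $\varphi$ is a weak*-homeomorphism of $L^\infty$ (its inverse being multiplication by $\overline{\varphi}$), so $\varphi H^\infty$ inherits weak*-closedness from $H^\infty$, and $z$-invariance is clear since $z\varphi H^\infty\subseteq \varphi H^\infty$.

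For the $(\Rightarrow)$ direction, my plan is to reduce to the classical Beurling-Helson-Lowdenslager theorem in $L^2$. Given $M$ weak*-closed with $zM\subseteq M$, I would first specialize Lemma \ref{including} to $\alpha=\Vert\cdot\Vert_\infty$: here $L^{\alpha'}=L^1$ and $\sigma(L^\alpha,L^{\alpha'})$ is the weak* topology on $L^\infty$, so the lemma yields $H^\infty M\subseteq M$. Next, let $N$ denote the $L^2$-closure of $M$ in $L^2(\mathbb{T},\mu)$. Then $N$ is a closed $z$-invariant subspace of $L^2$, so by the classical $L^2$ theorem (\cite{helson},\cite{H.L},\cite{hoffman}) either $N=\varphi H^2$ for some unimodular $\varphi$, or $N=\chi_F L^2$ for some Borel $F\subseteq \mathbb{T}$.

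It then remains to show $M=N\cap L^\infty$, which in the two cases equals $\varphi H^\infty$ or $\chi_F L^\infty$ respectively (for the first, use that $\overline{\varphi}$ is unimodular to see $\varphi H^2\cap L^\infty=\varphi H^\infty$). The inclusion $M\subseteq N\cap L^\infty$ is immediate. For the reverse, given $g\in N\cap L^\infty$, pick $L^2$-approximants $f_n\in M$ and use the Factorization Theorem \ref{factorization1} applied to $k_n=1/(|f_n|+1)\in L^\infty$ (whose inverse $|f_n|+1$ is in $L^\infty\subseteq L^\alpha$) to obtain unimodular $u_n$ and outer $s_n\in H^\infty$ with $s_n^{-1}\in H^\infty$ and $s_n f_n$ uniformly bounded. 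By $H^\infty M\subseteq M$ we have $s_n f_n\in M$, and after a bounded multiplier / Cesaro-mean reshuffling (as in the proof of Theorem \ref{density1}, part (2)) one obtains a bounded net in $M$ converging to $g$ in $L^2$, hence by Banach-Alaoglu along a subnet converging weak* to $g$; weak*-closedness of $M$ then forces $g\in M$.

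The main obstacle is the last step: producing bounded elements of $M$ whose weak* cluster points actually equal the target $g$, rather than some other element of $N\cap L^\infty$. The trick of multiplying by an $s_n\in H^\infty$ with $s_n^{-1}\in H^\infty$ is exactly what makes the truncation stay inside $M$ without sacrificing the approximation, and it is essentially the same device driving the proof of Theorem \ref{density1}. Since this lemma is the classical Beurling-Helson-Lowdenslager theorem for weak*-closed subspaces of $L^\infty$, the cleanest presentation is simply to invoke it from \cite{hoffman}.
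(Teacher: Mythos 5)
The paper itself offers no proof of this lemma: it is the classical weak*-closed ($p=\infty$) case of the Beurling--Helson--Lowdenslager theorem and is simply quoted from the literature (see the references \cite{halmos}, \cite{helson}, \cite{H.L}, \cite{hoffman}, \cite{srinivasan} indicated in the introduction). So your closing remark, that the cleanest course is to invoke it from \cite{hoffman}, is exactly the paper's treatment, and your $(\Leftarrow)$ direction is fine, as is the specialization of Lemma \ref{including} to $\alpha=\Vert\cdot\Vert_{\infty}$, where $\sigma(L^{\alpha},L^{\alpha'})$ is the weak* topology.

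If, however, the sketched reduction to $L^{2}$ is meant as an actual proof, the decisive step is the one you leave vague, and as written it has a gap. Writing $N$ for the $L^{2}$-closure of $M$, the hard inclusion is $N\cap L^{\infty}\subseteq M$; note this is genuinely false for general weak*-closed subspaces (take $M=\{f\in L^{\infty}:\int fw_{0}\,d\mu=0\}$ with $w_{0}\in L^{1}\setminus L^{2}$, whose $L^{2}$-closure is all of $L^{2}$), so the invariance must enter precisely here. Your truncations $s_{n}f_{n}=h_{n}^{-1}f_{n}$, with $h_{n}$ outer and $|h_{n}|=|f_{n}|+1$, do lie in $M$ and are bounded by $1$, but they do not tend to $g$: after passing to a subsequence on which $\log(|f_{n}|+1)\to\log(|g|+1)$ in $L^{1}$ and identifying the weak* cluster points of the bounded sequence $h_{n}^{-1}\subseteq H^{\infty}$ through their analytic values on $\mathbb{D}$, one gets $s_{n}f_{n}\to h^{-1}g$ weak*, where $h$ is the outer function with $|h|=|g|+1$. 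Weak*-closedness of $M$ therefore yields only $h^{-1}g\in M$; to recover $g$ you must observe that $h\in H^{\infty}$ (its modulus is bounded because $g\in L^{\infty}$) and apply $H^{\infty}M\subseteq M$ once more to conclude $g=h\,(h^{-1}g)\in M$. In particular the assertion that one obtains "a bounded net in $M$ converging to $g$ in $L^{2}$" is not what the construction gives, and unbounded $L^{2}$-approximants alone cannot be upgraded to weak* convergence, which is exactly why the detour through $h^{-1}$ and back is needed. With these points supplied the reduction works; without them the claim that the weak* cluster points equal the target $g$ is unjustified.
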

	
	\begin{theorem}
	\label{sl-blc}	Suppose $\mu$ is Haar measure on $\mathbb{T}$ and Let $\alpha$ be a $\Vert \cdot \Vert_{1}$-dominating normalized gauge norm on $L^{\infty} \left(
		\mathbb{T},\mu\right) $. 
		 	Let $W$ be an $\sigma(L^{\alpha}\left(
		 	\mathbb{T} \right),L^{\alpha^{'}}\left(
		 	\mathbb{T} \right))$-closed linear subspace of $L^{\alpha}\left(
		 	\mathbb{T} \right)$ with $zW\subseteq W$ if and only if either $W=\varphi
		H^{\alpha}(\mu)$ for some unimodular function $\varphi$, or $W=\chi
		_{E}L^{\alpha}(\mu)$, for some Borel subset $E$ of $\mathbb{T}$. If $0\neq
		W\subset H^{\alpha}(\mu)$, then $W=\varphi H^{\alpha}(\mu)$ for some inner
		function $\varphi$.
	\end{theorem}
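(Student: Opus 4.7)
The plan reduces the theorem to the classical weak*-closed case in $L^\infty(\mathbb{T},\mu)$ by way of Theorem \ref{density1}. Set $M = W \cap L^\infty(\mathbb{T},\mu)$; since $zW \subseteq W$, we also have $zM \subseteq M$. Theorem \ref{density1} then asserts that $M$ is weak*-closed in $L^\infty(\mathbb{T},\mu)$ and that $W = \overline{M}^{\sigma(L^\alpha,L^{\alpha'})}$. Invoking the classical Beurling--Helson--Lowdenslager theorem as packaged in Lemma \ref{w*}, either $M = \varphi H^\infty(\mathbb{T},\mu)$ for some unimodular $\varphi \in L^\infty$, or $M = \chi_E L^\infty(\mathbb{T},\mu)$ for some Borel $E \subseteq \mathbb{T}$.

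The next step is to lift these two classical descriptions back up to $L^\alpha$. The key observation is that multiplication by any $\psi \in L^\infty(\mathbb{T},\mu)$ is $\sigma(L^\alpha,L^{\alpha'})$-continuous on $L^\alpha(\mathbb{T},\mu)$: for $g \in L^{\alpha'}$ we have $\psi g \in L^{\alpha'}$, because $\alpha'$ is itself a $\|\cdot\|_1$-dominating gauge norm and therefore satisfies $\alpha'(\psi g) \leq \|\psi\|_\infty\,\alpha'(g)$. When $\psi = \varphi$ is unimodular, its inverse $\overline{\varphi}$ also lies in $L^\infty$, so multiplication by $\varphi$ is a $\sigma$-homeomorphism of $L^\alpha$ onto itself; hence
\[ W = \overline{\varphi H^\infty}^{\sigma(L^\alpha,L^{\alpha'})} = \varphi\,\overline{H^\infty}^{\sigma(L^\alpha,L^{\alpha'})} = \varphi H^\alpha(\mu), \]
the last equality being the very definition of $H^\alpha$. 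The second case is handled identically: $\chi_E$ commutes with the $\sigma$-closure, and since the $\sigma(L^\alpha,L^{\alpha'})$-closure of $L^\infty$ inside $L^\alpha$ is all of $L^\alpha$, we obtain $W = \chi_E L^\alpha(\mu)$.

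For the converse, both $\varphi H^\alpha$ and $\chi_E L^\alpha$ are $z$-invariant and $\sigma$-closed, being the images of the $\sigma$-closed sets $H^\alpha$ and $L^\alpha$ under the $\sigma$-continuous multiplication maps in question. Finally, if $0 \neq W \subseteq H^\alpha(\mu)$, the alternative $W = \chi_E L^\alpha$ with $\mu(E)>0$ is impossible, since $\chi_E \in W \subseteq H^\alpha \subseteq H^1$ would force $\chi_E$ to be analytic, contradicting the fact that a nontrivial characteristic function has nonzero negative Fourier coefficients. Hence $W = \varphi H^\alpha$, and because $1 \in H^\alpha$ we get $\varphi = \varphi \cdot 1 \in W \subseteq H^\alpha \subseteq H^1$; a unimodular $H^1$ function is by definition inner.

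The substantive difficulty in this plan is not the case analysis but the density statement in Theorem \ref{density1}, which itself rests on the outer factorization in Theorem \ref{factorization1}. Once those are in hand, the only remaining bookkeeping is the $\sigma(L^\alpha,L^{\alpha'})$-continuity of multiplication by $L^\infty$ functions, which reduces to the stability of $L^{\alpha'}$ under multiplication by $L^\infty$ supplied by the $\|\cdot\|_1$-dominating hypothesis.
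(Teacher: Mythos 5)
Your proposal follows essentially the same route as the paper: set $M = W \cap L^{\infty}(\mathbb{T},\mu)$, use Theorem \ref{density1} to get that $M$ is weak*-closed and $W=\overline{M}^{\sigma(L^{\alpha},L^{\alpha'})}$, apply Lemma \ref{w*}, and lift the two classical descriptions back to $L^{\alpha}$. In fact you supply details the paper leaves implicit (the $\sigma(L^{\alpha},L^{\alpha'})$-continuity of multiplication by $L^{\infty}$ functions, the converse direction, and the final inner-function statement), and these are correct, apart from the trivially handled case $\mu(E)=1$, where one notes directly that $W=L^{\alpha}\not\subseteq H^{\alpha}$.
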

	
	\begin{proof}
	 Let $M =W \cap L^{\infty}(\mathbb{T},\mu) $,  it follows from
		the (1) in theorem \ref{density1} that $M$ is weak* closed in $L^{\infty}
		(\mathbb{T},\mu)$. Since $z W \subset W$, it is easy to check that $z M \subset M$.
		Then by lemma \ref{w*}, we can conclude that either $M =\varphi
		H^{\infty}(\mathbb{T},\mu)$ for some unimodular function $\varphi$ or $M =\chi_{E}
		L^{\infty} (\mathbb{T},\mu)$, for some Borel subset $E$ of $\mathbb{T}$. By the (2)
		in theorem \ref{density1}, if $M =\varphi H^{\infty} (\mu)\text{,}$ $W
		=\overline{W \cap L^{\infty} (\mu)}^{\sigma(L^{\alpha}\left(
			\mathbb{T} \right),L^{\alpha^{'}}\left(
			\mathbb{T} \right))} 
		=\overline{M}^{\sigma(L^{\alpha}\left(
			\mathbb{T} \right),L^{\alpha^{'}}\left(
			\mathbb{T} \right))}
		=\overline{\varphi H^{\infty} (\mu)}^{\sigma(L^{\alpha}\left(
			\mathbb{T} \right),L^{\alpha^{'}}\left(
			\mathbb{T} \right))} =\varphi H^{\alpha}(\mathbb{T},\mu)$, for some unimodular function $\varphi$. 
		If $M =\chi_{E} L^{\infty} (\mu)\text{,}$ $W =\overline{W \cap L^{\infty} (\mu)}^{\sigma(L^{\alpha}\left(
			\mathbb{T} \right),L^{\alpha^{'}}\left(
			\mathbb{T} \right))}
		 =\overline{M}^{\sigma(L^{\alpha}\left(
			\mathbb{T} \right),L^{\alpha^{'}}\left(
			\mathbb{T} \right))}
		 =\overline{\chi_{E} L^{\infty} (\mu
			)}^{\sigma(L^{\alpha}\left(
			\mathbb{T} \right),L^{\alpha^{'}}\left(
			\mathbb{T} \right))}
		 =\chi_{E} L^{\alpha} (\mathbb{T},\mu)$, for some Borel
		subset $E$ of $\mathbb{T}$. The proof is completed.
	\end{proof}

	\section{The Extension of Beurling Theorem in Finite von Nuemann Algebras}
	
	Let $\mathcal{M}$ be a finite von Neumann algebra with a faithful, normal,
	tracial state $\tau .$ Given a von Neumann subalgebra $\mathcal{D}$ of $%
	\mathcal{M}$, a conditional expectation $\Phi $: $\mathcal{M}\rightarrow 
	\mathcal{D}$ is a positive linear map satisfying $\Phi (I)=I$ and $\Phi
	(x_{1}yx_{2})=x_{1}\Phi (y)x_{2}$ for all $x_{1},x_{2}\in \mathcal{D}$ and $%
	y\in \mathcal{M}.$ There exists a unique conditional expectation $\Phi
	_{\tau }$: $\mathcal{M}\rightarrow \mathcal{D}$ satisfying $\tau \circ \Phi
	_{\tau }(x)=\tau (x)$ for every $x\in \mathcal{M}$. Now we recall
	noncommutative Hardy spaces $H^{\infty }(\mathcal{M},\tau)$ in \cite{Arveson}.
	
	\begin{definition}
		Let $\mathcal{A}$ be a weak* closed unital subalgebra of $\mathcal{M}$, and
		let $\Phi _{\tau }$ be the unique faithful normal trace preserving
		conditional expectation from $\mathcal{M}$ onto the diagonal von Neumann
		algebra $\mathcal{D}=\mathcal{A}\cap \mathcal{A^{\ast }}$. Then $\mathcal{A}$
		is called a finite, maximal subdiagonal subalgebra of $\mathcal{M}$ with
		respect to $\Phi _{\tau }$ if\newline
		(1) $\mathcal{A}+\mathcal{A^{\ast }}$ is weak* dense in $\mathcal{M}$,%
		\newline
		(2) $\Phi _{\tau }(xy)=\Phi _{\tau }(x)\Phi _{\tau }(y)$ for all $x,y\in 
		\mathcal{A}$.\newline
		Such $\mathcal{A}$ will be denoted by $H^{\infty }(\mathcal{M},\tau)$, and $\mathcal{A}$ is
		also called a noncommutative Hardy space.
	\end{definition}
	
	\begin{example}
		Let $\mathcal{M}=L^{\infty }(\mathbb{T},\mu )$, and $\tau (f)=\int fd\mu $
		for all $f\in L^{\infty }(\mathbb{T},\mu )$. Let $\mathcal{A}=H^{\infty }(%
		\mathbb{T},\mu )$, then $\mathcal{D}=H^{\infty }(\mathbb{T},\mu )\cap
		H^{\infty }(\mathbb{T},\mu )^{\ast }=\mathbb{C}$. Let $\Phi _{\tau }$ be the
		mapping from $L^{\infty }(\mathbb{T},\mu )$ onto $\mathbb{C}$ defined by $%
		\Phi _{\tau }(f)=\int fd\mu $. Then $H^{\infty }(\mathbb{T},\mu )$ is a
		finite, maximal subdiagonal subalgebra of $L^{\infty }(\mathbb{T},\mu )$.
	\end{example}
	
	\begin{example}
		Let $\mathcal{M}=\mathcal{M}_{n}(\mathbb{C})$ be with the usual trace $\tau $%
		. Let $\mathcal{A}$ be the subalgebra of lower triangular matrices, now $%
		\mathcal{D}$ is the diagonal matrices and $\Phi _{\tau }$ is the natural
		projection onto the diagonal matrices. Then $\mathcal{A}$ is a finite maximal
		subdiagonal subalgebra of $\mathcal{M}_{n}(\mathbb{C})$.
	\end{example}
	
	Let $\mathcal{M}$ be a finite von Neumann algebra with a faithful, normal,
	tracial state $\tau $, $\Phi _{\tau }$ be the conditional expectation and $%
	\alpha $ be a normalized, unitarily invariant $\|\|_{1}$-dominating norm
	on $\mathcal{M}$. Let $L^{\alpha }(\mathcal{M},\tau )$ be the $\alpha $
	closure of $\mathcal{M}$,i.e., $L^{\alpha }(\mathcal{M},\tau )=[\mathcal{M}%
	]_{\alpha }$ and $(L^{\alpha }(\mathcal{M},\tau ))^{'}$ be the dual space of $L^{\alpha }(\mathcal{M},\tau )$, more details about the dual space of $L^{\alpha }(\mathcal{M},\tau )$ is in \cite{Chen3}. Similarly, we define $H^{\alpha }(\mathcal{M},\tau )= \overline{H^{\infty }(%
	\mathcal{M},\tau )}^{\sigma(L^{\alpha}\left(
	\mathcal{M},\tau \right),L^{\alpha^{'}}\left(
	\mathcal{M},\tau \right))}\cap L^{\alpha }(\mathcal{M},\tau ) $, $H_{0}^{\infty }(\mathcal{M},\tau )=\ker
	(\Phi _{\tau })\cap H^{\infty }(\mathcal{M},\tau )$ and $H_{0}^{\alpha }(%
	\mathcal{M},\tau )=\ker (\Phi _{\tau })\cap H^{\alpha }(\mathcal{M},\tau ).$
		
	\begin{example}
	Let $\alpha =\Vert \cdot \Vert _{p}$, then $L^{p}(\mathcal{M},\tau )=[%
	\mathcal{M}]_{p}$, $H^{p}(\mathcal{M},\tau )=\overline{H^{\infty }(%
		\mathcal{M},\tau )}^{\sigma(L^{p}\left(
		\mathcal{M},\tau \right),L^{q}\left(
		\mathcal{M},\tau \right))}\cap L^{p }(\mathcal{M},\tau )$. 
	\end{example}
	
	 In \cite{Saito}, K. S. Satio characterized the noncommutative
Hardy spaces $H^{p}(\mathcal{M},\tau )$ and $H_{0}^{p}(\mathcal{M},\tau )$.

\begin{lemma}
		Let $\mathcal{M}$ be a finite
		von Neumann algebra with a faithful, normal, tracial state $\tau $, and then \newline
	(1) $H^{1}(\mathcal{M},\tau )=\{x\in L^{1}(\mathcal{M},\tau ):\tau (xy)=0$
	for all $y\in H_{0}^{\infty }\}$,\newline
	(2) $H_{0}^{1}(\mathcal{M},\tau )=\{x\in L^{1}(\mathcal{M},\tau ):\tau (xy)=0
	$ for all $y\in H^{\infty }\}$,\newline
	(3) $H_{0}^{1}(\mathcal{M},\tau )=\{x\in H^{1}(\mathcal{M},\tau ):\Phi
	_{\tau }(xh)=0\}$.
\end{lemma}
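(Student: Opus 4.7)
The plan is to establish all three identities via the $L^1$--$\mathcal{M}$ duality induced by the trace pairing $\langle x,y\rangle=\tau(xy)$, leveraging the two subdiagonal axioms: weak$^{\ast}$-density of $H^{\infty}+(H^{\infty})^{\ast}$ in $\mathcal{M}$ and multiplicativity of $\Phi_{\tau}$ on $H^{\infty}$.

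For (1), I would first verify the forward inclusion. Given $x\in H^{1}$, pick $x_{n}\in H^{\infty}$ with $\|x_{n}-x\|_{1}\to 0$. For any $y\in H_{0}^{\infty}$, multiplicativity of $\Phi_{\tau}$ yields $\Phi_{\tau}(x_{n}y)=\Phi_{\tau}(x_{n})\Phi_{\tau}(y)=0$, hence $\tau(x_{n}y)=\tau\circ\Phi_{\tau}(x_{n}y)=0$. Letting $n\to\infty$ and using continuity of $\tau(\,\cdot\, y)$ on $L^{1}$ gives $\tau(xy)=0$. The reverse inclusion requires identifying the annihilator of $H^{1}$ in $\mathcal{M}$ with $H_{0}^{\infty}$. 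The forward direction already shows the annihilator contains $H_{0}^{\infty}$; for the other containment, if $w\in\mathcal{M}$ satisfies $\tau(aw)=0$ for every $a\in H^{\infty}$, then testing $a=d\in\mathcal{D}\subseteq H^{\infty}$ and using $\tau(dw)=\tau(d\Phi_{\tau}(w))$ together with faithfulness of $\tau|_{\mathcal{D}}$ forces $\Phi_{\tau}(w)=0$. The Arveson--Saito orthogonal decomposition $L^{2}(\mathcal{M},\tau)=H^{2}\oplus(H_{0}^{2})^{\ast}$, itself a consequence of the subdiagonal axioms, then places $w$ in the $L^{2}$-closure of $H_{0}^{\infty}$; combined with $w\in\mathcal{M}$ and weak$^{\ast}$-closedness of $H_{0}^{\infty}=\ker\Phi_{\tau}\cap H^{\infty}$ in $\mathcal{M}$, this yields $w\in H_{0}^{\infty}$. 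Taking pre-annihilators in $L^{1}$ gives the desired characterization.

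Statement (2) is a mirror argument: the forward inclusion uses $\Phi_{\tau}(xy)=\Phi_{\tau}(x)\Phi_{\tau}(y)=0$, now with $x\in H_{0}^{\infty}$ (so $\Phi_{\tau}(x)=0$) and $y\in H^{\infty}$, combined with $\|\cdot\|_{1}$-density of $H_{0}^{\infty}$ in $H_{0}^{1}$. The reverse inclusion follows from the identification of the annihilator of $H_{0}^{1}$ in $\mathcal{M}$ as $H^{\infty}$, proved by the same pre-annihilator calculus plus the $L^{2}$ decomposition. Statement (3) then falls out by combining (1) and (2): for $x\in H^{1}$, membership in $H_{0}^{1}$ is equivalent to $\tau(xy)=0$ for all $y\in H^{\infty}$; specializing to $y=d\in\mathcal{D}$ and using $\tau(xd)=\tau(\Phi_{\tau}(x)d)$ identifies the condition with the vanishing of the $L^{1}$-extension of $\Phi_{\tau}$ applied to $x$ (twisted by an arbitrary $h\in\mathcal{D}$), which is the intended content of the displayed formula.

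The main technical obstacle will be justifying the Arveson-type $L^{2}$-decomposition and transferring $L^{2}$-orthogonality to the $L^{1}$ setting. I would handle this by a two-step strategy: first establish the duality at the $\mathcal{M}$-pre-annihilator level, where the trace pairing is genuinely Hilbertian on a dense subset, and then transfer to $L^{1}$ via $\|\cdot\|_{1}$-density of $H^{\infty}$ in $H^{1}$ and of $H_{0}^{\infty}$ in $H_{0}^{1}$. The extension of $\Phi_{\tau}$ from $\mathcal{M}$ to a contraction on $L^{1}(\mathcal{M},\tau)$, needed for (3), is a standard consequence of $\tau\circ\Phi_{\tau}=\tau$ and completes the program.
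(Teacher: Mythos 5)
The paper itself offers no proof of this lemma: it is quoted from Saito's 1979 paper, so the only comparison available is with the standard argument there, which is essentially the duality scheme you set up — prove the bounded-level annihilator identities $\{w\in\mathcal{M}:\tau(aw)=0\ \forall a\in H^{\infty}\}=H_{0}^{\infty}$ and $\{w\in\mathcal{M}:\tau(aw)=0\ \forall a\in H_{0}^{\infty}\}=H^{\infty}$, and then pass to $L^{1}$ via the bipolar theorem for the pairing of $L^{1}(\mathcal{M},\tau)$ with $\mathcal{M}$. Your forward inclusions, the testing against $d\in\mathcal{D}$ to force $\Phi_{\tau}(w)=0$, the $L^{1}$-extension of $\Phi_{\tau}$, and the derivation of (3) from (2) are all sound, and the bipolar bookkeeping is the right frame.

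The gap is at the decisive step of the reverse inclusions. From the decomposition $L^{2}=H^{2}\oplus(H_{0}^{2})^{\ast}$ you correctly place $w$ in $H_{0}^{2}\cap\mathcal{M}$, but you then conclude $w\in H_{0}^{\infty}$ from ``$w\in\mathcal{M}$ together with weak$^{\ast}$-closedness of $H_{0}^{\infty}$.'' That soft principle is false: a weak$^{\ast}$-closed subspace $S\subseteq\mathcal{M}$ need not satisfy $\overline{S}^{\,L^{2}}\cap\mathcal{M}=S$. For instance, in $L^{\infty}[0,1]$ take $S=\{f:\int fg\,d\mu=0\}$ with $g\in L^{1}\setminus L^{2}$; then $S$ is weak$^{\ast}$-closed of codimension one, yet $\overline{S}^{\,L^{2}}=L^{2}$, so $\overline{S}^{\,L^{2}}\cap L^{\infty}=L^{\infty}\neq S$. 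What your argument actually needs is the identity $H^{2}\cap\mathcal{M}=H^{\infty}$ (equivalently $H_{0}^{2}\cap\mathcal{M}=H_{0}^{\infty}$), and this is precisely where \emph{maximality} of the subdiagonal algebra enters — via Arveson's characterization of the maximal subdiagonal algebra determined by $\Phi_{\tau}$, or Exel's theorem that weak$^{\ast}$-closed subdiagonal algebras are automatically maximal. That identity is the real content of Saito's lemma; it does not follow formally from the two subdiagonal axioms plus weak$^{\ast}$-closedness by the closure argument you give, so as written the crux has been assumed. The same defect recurs in your sketch of (2), where the annihilator of $H_{0}^{1}$ must be identified with $H^{\infty}$. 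To repair the proof, either invoke maximality explicitly and cite (or prove) $H^{2}\cap\mathcal{M}=H^{\infty}$, or argue directly at the bounded level with Arveson's description of the maximal algebra; the rest of your outline then goes through.
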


	 \begin{theorem}
	  	\label{Halpha}Let $\mathcal{M}$ be a finite
	  	von Neumann algebra with a faithful, normal, tracial state $\tau $ and $%
	  	\alpha $ be a normalized, unitarily invariant $\Vert \cdot \Vert _{1,\tau }$%
	  	-dominating norm on $\mathcal{M}$. Let $H^{\infty }$ be a finite
	  	subdiagonal subalgebra of $\mathcal{M}$. Then there
	  	exists a faithful normal tracial state $\tau $ such that $H^{\alpha }(%
	  	\mathcal{M},\tau )=H^{1}(\mathcal{M},\tau )\cap L^{\alpha }(\mathcal{M},\tau)$.
	  \end{theorem}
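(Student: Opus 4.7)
The theorem is the noncommutative analog of Theorem \ref{N8}, so I would follow the same two-step strategy, replacing the commutative Fourier-coefficient argument with the multiplicativity of the conditional expectation $\Phi_{\tau}$ on $H^{\infty}$ and its extension to $L^{1}$-products.

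For the inclusion $H^{\alpha}(\mathcal{M},\tau)\subseteq H^{1}(\mathcal{M},\tau)\cap L^{\alpha}(\mathcal{M},\tau)$, I would take $x\in H^{\alpha}$ and a net $\{x_{\lambda}\}\subset H^{\infty}(\mathcal{M},\tau)$ with $x_{\lambda}\to x$ in the $\sigma(L^{\alpha},L^{\alpha'})$-topology; since $\alpha\geq\|\cdot\|_{1,\tau}$, automatically $x\in L^{\alpha}\subseteq L^{1}$. For any $y\in H_{0}^{\infty}\subseteq\mathcal{M}\subseteq L^{\alpha'}(\mathcal{M},\tau)$, the functional $z\mapsto\tau(zy)$ is $\sigma(L^{\alpha},L^{\alpha'})$-continuous, so $\tau(x_{\lambda}y)\to\tau(xy)$. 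Since $H_{0}^{\infty}$ is a right ideal of $H^{\infty}$ (as $\Phi_{\tau}(x_{\lambda}y)=\Phi_{\tau}(x_{\lambda})\Phi_{\tau}(y)=0$), we have $x_{\lambda}y\in H_{0}^{\infty}$ and therefore $\tau(x_{\lambda}y)=\tau(\Phi_{\tau}(x_{\lambda}y))=0$. Passing to the limit yields $\tau(xy)=0$ for every $y\in H_{0}^{\infty}$, and Saito's characterization (part (1) of the preceding lemma) gives $x\in H^{1}(\mathcal{M},\tau)$.

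For the reverse inclusion $H^{1}\cap L^{\alpha}\subseteq H^{\alpha}$, I would argue by Hahn-Banach separation in $(L^{\alpha},\sigma(L^{\alpha},L^{\alpha'}))$. Suppose for contradiction that $w\in(H^{1}\cap L^{\alpha})\setminus H^{\alpha}$. Then there exists a $\sigma(L^{\alpha},L^{\alpha'})$-continuous linear functional $\varphi$ on $L^{\alpha}(\mathcal{M},\tau)$ with $\varphi|_{H^{\alpha}}=0$ and $\varphi(w)\neq 0$. By the noncommutative analog of Lemma \ref{dual} developed in \cite{Chen3}, $\varphi(\cdot)=\tau(\cdot\,g)$ for some $g\in L^{\alpha'}(\mathcal{M},\tau)$. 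Since $\varphi$ annihilates $H^{\infty}$, $\tau(ag)=0$ for every $a\in H^{\infty}$, so Saito part (2) forces $g\in H_{0}^{1}(\mathcal{M},\tau)$. The product $wg$ lies in $L^{1}(\mathcal{M},\tau)$ by the H\"{o}lder inequality for the $\alpha$-$\alpha'$ pairing, and the multiplicativity of $\Phi_{\tau}$ extended to such $L^{1}$-products yields $\Phi_{\tau}(wg)=\Phi_{\tau}(w)\Phi_{\tau}(g)=\Phi_{\tau}(w)\cdot 0=0$. Hence $\varphi(w)=\tau(wg)=\tau(\Phi_{\tau}(wg))=0$, contradicting $\varphi(w)\neq 0$ and forcing $w\in H^{\alpha}(\mathcal{M},\tau)$.

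The main obstacle is justifying the multiplicativity identity $\Phi_{\tau}(wg)=\Phi_{\tau}(w)\Phi_{\tau}(g)$ for $w\in H^{1}\cap L^{\alpha}$ and $g\in H_{0}^{1}\cap L^{\alpha'}$; this is immediate on $H^{\infty}\cdot H^{\infty}$ from the subdiagonal axioms but requires an approximation argument in the mixed $\alpha$/$\alpha'$ norms. I would handle this via noncommutative Riesz factorization, writing $w=w_{1}w_{2}$ with $w_{1},w_{2}\in H^{2}(\mathcal{M},\tau)$, reducing to the multiplicativity of $\Phi_{\tau}$ on $H^{2}\cdot H^{2}\to L^{1}$ (a consequence of $L^{2}$-density of $H^{\infty}$ in $H^{2}$ and continuity of $\Phi_{\tau}$), and then invoking the trace identity $\tau(w_{1}w_{2}g)=\tau(w_{2}gw_{1})$ together with factorization of $g$ to bring the product into a form controlled by the bounded subdiagonal case. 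Once this identity is in hand, the rest of the argument is routine bookkeeping with the $\sigma(L^{\alpha},L^{\alpha'})$-pairing.
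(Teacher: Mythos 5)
Your first inclusion and the overall Hahn--Banach framework for the reverse inclusion match the paper: the paper likewise takes a net in $H^{\infty}$ converging in the $\sigma(L^{\alpha},L^{\alpha'})$-topology, pairs it against $H_{0}^{\infty}\subseteq\mathcal{M}\subseteq L^{\alpha'}(\mathcal{M},\tau)$, and invokes Saito's characterization; for the converse it also separates by Hahn--Banach, writes the functional as $\tau(\cdot\,\xi)$ with $\xi\in L^{\alpha'}(\mathcal{M},\tau)$, and concludes $\xi\in H_{0}^{1}(\mathcal{M},\tau)$ from Saito's part (2). The divergence, and the genuine gap, is in how you show $\tau(w\xi)=0$. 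You route everything through the multiplicativity identity $\Phi_{\tau}(wg)=\Phi_{\tau}(w)\Phi_{\tau}(g)$ for $w\in H^{1}\cap L^{\alpha}$ and $g\in H_{0}^{1}\cap L^{\alpha'}$, which you yourself flag as the main obstacle, and the sketch you give does not close it: after the Riesz factorization $w=w_{1}w_{2}$ with $w_{1},w_{2}\in H^{2}$, the intermediate products $w_{2}g$ and $gw_{1}$ need not lie in $L^{1}$ (or in any $L^{p}$ with $p\geq 1$), since $g$ is only known to lie in $L^{\alpha'}\subseteq L^{1}$ and there is no H\"{o}lder-type pairing between $L^{2}$ and $L^{\alpha'}$; consequently the cyclicity $\tau(w_{1}w_{2}g)=\tau(w_{2}gw_{1})$ is not justified, and a factorization of $g$ inside $H_{0}^{1}\cap L^{\alpha'}$ with factors that pair against $H^{2}$ is not available in this generality. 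So, as written, the key identity is asserted rather than proved.

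The paper avoids multiplicativity on unbounded products altogether. Since $x\in H^{1}(\mathcal{M},\tau)$, Saito's part (1) already gives $\tau(xt)=0$ for every $t\in H_{0}^{\infty}$; one then chooses $\xi_{n}\in H_{0}^{\infty}$ with $\Vert \xi_{n}-\xi\Vert_{1}\rightarrow 0$ (norm density of $H_{0}^{\infty}$ in $H_{0}^{1}$) and invokes the convergence result quoted from \cite{Chen3} (Lemma 3.4 there) to pass to the limit $\tau(x\xi_{n})\rightarrow\tau(x\xi)$, yielding $\tau(x\xi)=0$ and the desired contradiction. If you want to keep your formulation, replace the factorization argument by this approximation step: it uses only facts already at hand and makes the mixed $\alpha$/$\alpha'$ bookkeeping, and the extension of $\Phi_{\tau}$ to such products, unnecessary.
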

	  \begin{proof}
	  By the definition of $H^{\alpha }(\mathcal{M},\tau )$, we have $H^{\alpha }(\mathcal{M},\tau )= \overline{H^{\infty }(%
	  	\mathcal{M},\tau )}^{\sigma(L^{\alpha}\left(
	  	\mathcal{M},\tau \right),L^{\alpha^{'}}\left(
	  	\mathcal{M},\tau \right))}\cap L^{\alpha }(\mathcal{M},\tau ) \subseteq L^{\alpha}\left(
	  \mathcal{M},\tau \right) $. For every $x\in H^{\alpha }(\mathcal{M},\tau )= \overline{H^{\infty }(%
	  	\mathcal{M},\tau )}^{\sigma(L^{\alpha}\left(
	  	\mathcal{M},\tau \right),L^{\alpha^{'}}\left(
	  	\mathcal{M},\tau \right))}\cap L^{\alpha }(\mathcal{M},\tau ) $, there exists a net ${x_{n}} $ in $H^{\infty }(\mathcal{M},\tau )$ 
	  such that $x_{n}\rightarrow x$ in $\sigma(L^{\alpha}\left(
	  \mathcal{M},\tau \right),L^{\alpha^{'}}\left(
	  \mathcal{M},\tau \right))$ topology. Since $x_{n}\in H^{\infty }(\mathcal{M},\tau )\subseteq H^{1}(\mathcal{M},\tau )$, $\tau(x_{n}y)=0$ for all $y\in H^{\infty }(\mathcal{M},\tau ), \Phi(y)=0$. We know $x_{n}\rightarrow x$ in $\sigma(L^{\alpha}\left(
	  \mathcal{M},\tau \right),L^{\alpha^{'}}\left(
	  \mathcal{M},\tau \right))$ topology, so $\forall y \in H^{\infty }(\mathcal{M},\tau ), \tau(x_{n}y) \rightarrow \tau(xy)$. Therefore, for all $y\in H^{\infty }(\mathcal{M},\tau ), \Phi(y)=0$,$\tau(xy)=0$.
	  Thus  $H^{\alpha }(\mathcal{M},\tau )\subseteq H^{1}(\mathcal{M},\tau )$.Therefore, 
	  $H^{\alpha }(\mathcal{M},\tau )\subseteq H^{1}(\mathcal{M},\tau )\cap L^{\alpha }(\mathcal{M},\tau)$.\\
	  Next, we show that $H^{\alpha }(\mathcal{M},\tau )= H^{1}(\mathcal{M},\tau )\cap L^{\alpha }(\mathcal{M},\tau)$.\\
	   Assume, via contradiction, that $H^{\alpha }(\mathcal{M},\tau )\subsetneq H^{1}(\mathcal{M},\tau )\cap L^{\alpha }(\mathcal{M},\tau)$. By the Hahn-Banach theorem, there is a $\sigma(L^{\alpha}\left(
	  \mathcal{M},\tau \right),L^{\alpha^{'}}\left(
	  \mathcal{M},\tau \right))$-continuous functional $\Phi$ on $L^{\alpha}\left(\mathcal{M},\tau \right)$ and $x\in H^{1}(\mathcal{M},\tau )\cap L^{\alpha }(\mathcal{M},\tau)$ such that 
	  $\Phi(x)=0$ and $\Phi(y)=0$ for $\forall y \in H^{\alpha }(\mathcal{M},\tau )$. Since $\xi \in L^{\alpha^{'}}\left(
	  \mathcal{M},\tau \right) $ such that $\Phi(z)=\tau(z\xi), \forall z \in L^{\alpha }(\mathcal{M},\tau)$, we have $\Phi(y)=\tau(y\xi), \forall y \in H^{\alpha }(\mathcal{M},\tau)\subseteq L^{\alpha }(\mathcal{M},\tau)$. Because $\xi \in L^{\alpha^{'}}\left(
	  \mathcal{M},\tau \right)\subseteq L^{1 }(\mathcal{M},\tau) $ and $\Phi(y)=\tau(y\xi), \forall y \in H^{\infty }(\mathcal{M},\tau)$, $\xi \in H^{1}(\mathcal{M},\tau )_{0}$.\\
	  Since $x\in H^{1}(\mathcal{M},\tau ), \tau(x\xi_{n}) =0, \forall \xi_{n}\in  H^{\infty }(\mathcal{M},\tau)_{0} $. There exists a net $\xi_{n} \in  H^{\infty }(\mathcal{M},\tau)_{0} $ such that $\xi_{n}\rightarrow \xi$ in $\|\|_{1}$ topology, so $\tau(\xi_{n})\rightarrow \tau(\xi)$. By lemma 3.4 in \cite{Chen3}, $\tau(x\xi_{n})\rightarrow \tau(x\xi)$. Therefore, $\Phi(x)=\tau(x\xi)=0$, which contradicts $\Phi(x)=0$. Thus, $H^{\alpha }(\mathcal{M},\tau )= H^{1}(\mathcal{M},\tau )\cap L^{\alpha }(\mathcal{M},\tau)$.
	  \end{proof}

	\begin{theorem}
		\label{factorization} Let $\mathcal{M}$ be a finite von Neumann algebra with a faithful, normal, tracial state $\tau $ and $\alpha $ be a normalized, unitarily invariant, $\|\|_{1}$-dominating norm on $\mathcal{M}$. Let $H^{\infty }$
		be a finite subdiagonal subalgebra of $\mathcal{M}$.
		If $k\in \mathcal{M}$ and $k^{-1}\in L^{\alpha }\left( 
		\mathcal{M},\tau \right) ,$ then there are unitary operators $u_{1},u_{2}\in 
		\mathcal{M}$ and $s_{1},s_{2}\in H^{\infty }$ such that $%
		k=u_{1}s_{1}=s_{2}u_{2}$ and $s_{1}^{-1},s_{2}^{-1}\in H^{\alpha }(\mathcal{M%
		},\tau )$.
	\end{theorem}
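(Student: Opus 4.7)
The plan is to mirror the commutative proof of Theorem \ref{factorization1}, with Arveson's noncommutative outer factorization playing the role of the Herglotz construction and Theorem \ref{Halpha} playing the role of the classical identity $H^{p}=H^{1}\cap L^{p}$.

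First I would verify that $\log|k|\in L^{1}(\mathcal{M},\tau)$, i.e.\ that the Fuglede--Kadison determinant $\Delta(k)$ is strictly positive. Since $k\in\mathcal{M}$, $\log^{+}|k|$ is bounded. For the negative part I would use the polar decomposition $k=v|k|$, with $v$ unitary because $\mathcal{M}$ is finite and $k$ is $\tau$-measurably invertible (as $k^{-1}\in L^{\alpha}$). Then $|k^{-1}|=v|k|^{-1}v^{\ast}$, so unitary invariance of $\alpha$ combined with $k^{-1}\in L^{\alpha}\subseteq L^{1}$ yields $|k|^{-1}\in L^{1}$, and the scalar bound $-\log t\leq 1/t$ applied spectrally to $|k|$ gives $\log^{-}|k|\in L^{1}$. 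With $\Delta(k)>0$ in hand I would invoke Arveson's outer factorization (\cite{Arveson}): there exist a unitary $u_{1}\in\mathcal{M}$ and a right-outer $s_{1}\in H^{\infty}$ with $k=u_{1}s_{1}$ and $s_{1}^{\ast}s_{1}=k^{\ast}k$, and symmetrically a unitary $u_{2}\in\mathcal{M}$ and a left-outer $s_{2}\in H^{\infty}$ with $k=s_{2}u_{2}$ and $s_{2}s_{2}^{\ast}=kk^{\ast}$; both $s_{i}$ lie in $H^{\infty}$ because $|s_{i}|$ has the same spectral distribution as $|k|$ and hence is bounded by $\|k\|$.

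To conclude $s_{i}^{-1}\in H^{\alpha}(\mathcal{M},\tau)$, by Theorem \ref{Halpha} it suffices to show $s_{i}^{-1}\in H^{1}(\mathcal{M},\tau)\cap L^{\alpha}(\mathcal{M},\tau)$. Writing $s_{1}^{-1}=k^{-1}u_{1}$ and using unitary invariance of $\alpha$ gives $s_{1}^{-1}\in L^{\alpha}\subseteq L^{1}$. For the $H^{1}$-claim I would apply Saito's characterization (the lemma preceding Theorem \ref{Halpha}): fix $y\in H_{0}^{\infty}$, and use the outerness of $s_{1}$ to produce $h_{n}\in H^{\infty}$ with $s_{1}h_{n}\to 1$. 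Since $H_{0}^{\infty}$ is a two-sided ideal of $H^{\infty}$, $yh_{n}\in H_{0}^{\infty}$ and hence $\tau(yh_{n})=0$; passing to the limit in the identity
\[
\tau(ys_{1}^{-1})=\tau(yh_{n})+\tau\bigl(ys_{1}^{-1}(1-s_{1}h_{n})\bigr)
\]
then yields $\tau(ys_{1}^{-1})=0$ for every $y\in H_{0}^{\infty}$, so $s_{1}^{-1}\in H^{1}$. The argument for $s_{2}^{-1}$ is symmetric.

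The main obstacle is controlling the error term in the limit step above: because $s_{1}^{-1}$ is a priori only in $L^{\alpha}\subseteq L^{1}$ (not in $L^{2}$), one must choose the approximants $h_{n}$ carefully, or invoke a sufficiently strong $L^{p}$-convergent version of Arveson's outer factorization, in order to force $\tau(ys_{1}^{-1}(1-s_{1}h_{n}))\to 0$. Once this issue is resolved, the rest of the proof is a routine translation of the argument for Theorem \ref{factorization1} into the noncommutative setting.
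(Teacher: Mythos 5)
Your reduction is sound up to the decisive point, and it follows a genuinely different route from the paper: you get $\Delta(k)>0$ from $\log^{+}|k|$ bounded, $|k|^{-1}=v^{\ast}|k^{-1}|v\in L^{1}$ and $-\log t\le 1/t$, then invoke a noncommutative Riesz--Szeg\H{o} type outer factorization to write $k=u_{1}s_{1}=s_{2}u_{2}$ with $|s_{i}|$ bounded (hence $s_{i}\in H^{\infty}$), note $s_{1}^{-1}=k^{-1}u_{1}\in L^{\alpha}$ by unitary invariance, and reduce the remaining claim to $s_{i}^{-1}\in H^{1}$ via Theorem \ref{Halpha}. Two remarks on this: the factorization you need for a non-invertible $k\in\mathcal{M}$ with $\Delta(k)>0$ is not Arveson's theorem (which assumes $k$ invertible in $\mathcal{M}$) but the later $L^{p}$ results of \cite{Bekjan}, \cite{B.L.2} applied with $p=2$ together with $H^{2}\cap\mathcal{M}=H^{\infty}$; that is a citation issue, not a mathematical one. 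The paper's own argument for Theorem \ref{factorization} (left unfinished in the manuscript) goes a different way: it uses the polar decomposition $k=v|k|$ and the square-root trick $|k|^{1/2}\in\mathcal{M}$, $|k|^{-1/2}\in L^{2}$, evidently so as to quote a known Saito-type $L^{2}$ factorization in which membership of the inverse in $H^{2}\subseteq H^{1}$ is part of the quoted statement, exactly as in the commutative Theorem \ref{factorization1}, where $s^{-1}=h=uk^{-1}\in H^{1}\cap L^{\alpha}$ comes for free from the explicit outer construction with $|h|=|k^{-1}|$.

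The gap you flag is genuine and it sits at the analytic heart of the theorem, so the proposal is incomplete as written. Outerness of $s_{1}$ gives approximants $h_{n}$ with $s_{1}h_{n}\to 1$ only in $\Vert\cdot\Vert_{2}$ (or weak*), and a priori $h_{n}\in H^{2}$ rather than $H^{\infty}$; meanwhile $ys_{1}^{-1}$ is known only to lie in $L^{1}$. The trace pairing of a fixed $L^{1}$ element against a sequence converging in $L^{2}$ (or weak*) need not converge, so $\tau\bigl(ys_{1}^{-1}(1-s_{1}h_{n})\bigr)\to 0$ does not follow, and your displayed identity does not pass to the limit; "choosing the approximants carefully" is precisely what has to be proved, since the conclusion $s_{1}^{-1}\in H^{1}$ is equivalent to the statement being established. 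The cleanest repair within your framework is to replace the limit argument by a citation: \cite{B.L.2} shows that the inverse of an outer element, when it lies in some $L^{q}$, is again (outer, hence) in $H^{q}$, which applied with the fact $s_{1}^{-1}\in L^{\alpha}\subseteq L^{1}$ yields $s_{1}^{-1}\in H^{1}\cap L^{\alpha}=H^{\alpha}(\mathcal{M},\tau)$ by Theorem \ref{Halpha}; alternatively, follow the paper and reduce to the $L^{2}$ setting via $|k|^{\pm 1/2}$, where the needed factorization with $s^{-1}\in H^{2}$ is already in the literature (\cite{Saito}). Without one of these inputs, the key inclusion $s_{i}^{-1}\in H^{\alpha}$ remains unproved.
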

	
	\begin{proof}
			Suppose $k\in \mathcal{M}$ with $k^{-1}\in L^{\alpha }(%
			\mathcal{M},\tau).$ Assume that $k=v\left\vert k\right\vert $ is the polar
			decomposition of $k$ in $\mathcal{M}$, where $v$ is a unitary in $\mathcal{M}$. Then from the
		assumption that $k^{-1}=\left\vert k\right\vert^{-1}v^{*}$, so we have $\left\vert k\right\vert^{-1} \in L^{\alpha }(%
		\mathcal{M},\tau)\subset L^{1}(%
		\mathcal{M},\tau) $. Since $\left\vert k\right\vert$ in $\mathcal{M}$ positive, we have $\left\vert k\right\vert^{-\frac{1}{2}}\in L^{2}(\mathcal{M},\tau)$ and $\left\vert k\right\vert^{\frac{1}{2}}\in \mathcal{M} $. There exists a unitary operator$u_{1}\in \mathcal{M}$and $s_{1}\in H^{\infty}$ such that

	\end{proof}
	
	The following density theorem also plays an important role in the proof of our main result of the paper.
	
	\begin{theorem}
		\label{density}Let $\mathcal{M}$ be a finite von Neumann algebra with a faithful, normal, tracial state $\tau $ and $\alpha $ be a 
		normalized, unitarily invariant, $\|\|_{1}$-dominating norm on $\mathcal{M}$. If $\mathcal{W}$ is a $\sigma(L^{\alpha}\left(
		\mathcal{M},\tau \right),L^{\alpha^{'}}\left(
		\mathcal{M},\tau \right))$-closed subspace of $L^{\alpha }(\mathcal{M},\tau )$ and $\mathcal{N}$ is a weak* closed linear subspace of $
		\mathcal{M}$ such that $\mathcal{W}H^{\infty }\subset \mathcal{W}$ and $%
		\mathcal{N}H^{\infty }\subset \mathcal{N}$, then\newline
		(1) $\mathcal{N}=\overline{\mathcal{N}}^{\sigma(L^{\alpha}\left(
			\mathcal{M},\tau \right),L^{\alpha^{'}}\left(
			\mathcal{M},\tau \right))}\cap \mathcal{M}$,\newline
		(2) $\mathcal{W}\cap \mathcal{M}$ is weak* closed in $\mathcal{M}$,\newline
		(3) $\mathcal{W}=\overline{\mathcal{W}\cap \mathcal{M}}^{\sigma(L^{\alpha}\left(
			\mathcal{M},\tau \right),L^{\alpha^{'}}\left(
			\mathcal{M},\tau \right))}$,\newline
		(4) If $\mathcal{S}$ is a subspace of $\mathcal{M}$ such that $\mathcal{S}%
		H^{\infty }\subset \mathcal{S}$, then $\overline{\mathcal{S}}^{\sigma(L^{\alpha}\left(
			\mathcal{M},\tau \right),L^{\alpha^{'}}\left(
			\mathcal{M},\tau \right))}=\overline{\overline{%
			\mathcal{S}}^{w\ast }}^{\sigma(L^{\alpha}\left(
		\mathcal{M},\tau \right),L^{\alpha^{'}}\left(
		\mathcal{M},\tau \right))}$, where $\overline{\mathcal{S}}^{w\ast }$
		is the weak*-closure of $\mathcal{S}$ in $\mathcal{M}$.
	\end{theorem}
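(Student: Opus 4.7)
I prove parts (2), (3), (4), (1) in that order; each relies only on the preceding ones together with Theorem~\ref{factorization}. For part (2), I invoke the Krein--Smulian theorem (applied to $\mathcal{M}=(L^{1}(\mathcal{M},\tau))^{\#}$): it suffices to show that $\mathcal{W}\cap\mathcal{M}\cap\mathbb{B}$ is weak*-closed, where $\mathbb{B}$ is the unit ball of $\mathcal{M}$. For a net $\{f_{\lambda}\}\subseteq\mathcal{W}\cap\mathcal{M}\cap\mathbb{B}$ with weak*-limit $f$, the inclusion $L^{\alpha'}\subseteq L^{1}(\mathcal{M},\tau)$ forces $\tau(f_{\lambda}g)\to\tau(fg)$ for every $g\in L^{\alpha'}$, so $f_{\lambda}\to f$ in $\sigma(L^{\alpha},L^{\alpha'})$ as well; hence $f\in\mathcal{W}$, and $f\in\mathbb{B}$ by weak*-closedness of the unit ball.

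\textbf{Parts (3) and (4).} The containment $\mathcal{W}\supseteq\overline{\mathcal{W}\cap\mathcal{M}}^{\sigma}$ is automatic. For the reverse, given $f\in\mathcal{W}$, set $k=(1+|f|)^{-1}\in\mathcal{M}$; then $k^{-1}=1+|f|\in L^{\alpha}$, so Theorem~\ref{factorization} yields $s_{2}\in H^{\infty}$ with $s_{2}^{-1}\in H^{\alpha}$ and a unitary $u_{2}\in\mathcal{M}$ such that $k=s_{2}u_{2}$. Writing $f=v|f|$ (polar decomposition), $fk=v|f|(1+|f|)^{-1}$ is a contraction, so $fs_{2}=(fk)u_{2}^{-1}\in\mathcal{M}$; also $fs_{2}\in\mathcal{W}H^{\infty}\subseteq\mathcal{W}$, whence $fs_{2}\in\mathcal{W}\cap\mathcal{M}$. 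Pick $\{s_{n}\}\subseteq H^{\infty}$ with $s_{n}\to s_{2}^{-1}$ in $\sigma$. For every $g\in L^{\alpha'}$, $g(fs_{2})\in L^{\alpha'}$, so
\[
\tau((fs_{2})s_{n}g)=\tau(s_{n}\cdot g(fs_{2}))\longrightarrow\tau(s_{2}^{-1}\cdot g(fs_{2}))=\tau(fg),
\]
giving $(fs_{2})s_{n}\to f$ in $\sigma$ with $(fs_{2})s_{n}\in(\mathcal{W}\cap\mathcal{M})H^{\infty}\subseteq\mathcal{W}\cap\mathcal{M}$. This proves (3). For (4), apply (2) to $\overline{\mathcal{S}}^{\sigma}$, which is $H^{\infty}$-invariant by $\sigma$-continuity of right multiplication; then $\overline{\mathcal{S}}^{\sigma}\cap\mathcal{M}$ is weak*-closed and contains $\mathcal{S}$, hence contains $\overline{\mathcal{S}}^{w*}$, so $\overline{\overline{\mathcal{S}}^{w*}}^{\sigma}\subseteq\overline{\mathcal{S}}^{\sigma}$; the reverse inclusion is trivial from $\mathcal{S}\subseteq\overline{\mathcal{S}}^{w*}$.

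\textbf{Part (1) and the main obstacle.} The inclusion $\mathcal{N}\subseteq\overline{\mathcal{N}}^{\sigma}\cap\mathcal{M}$ is obvious. The key observation for the reverse is that on bounded subsets of $\mathcal{M}$ the topologies $w^{*}$ and $\sigma(L^{\alpha},L^{\alpha'})$ coincide: both are Hausdorff (since $\mathcal{M}\subseteq L^{\alpha'}$ separates points of $\mathcal{M}$ via $\tau$), $w^{*}$ is compact by Banach--Alaoglu, and the identity $w^{*}\to\sigma$ is continuous because $L^{\alpha'}\subseteq L^{1}$, so by compactness--Hausdorffness it is a homeomorphism. Consequently, once I produce for $f\in\overline{\mathcal{N}}^{\sigma}\cap\mathcal{M}$ a \emph{bounded} net in $\mathcal{N}$ that $\sigma$-converges to $f$, Banach--Alaoglu extracts a weak*-convergent subnet whose limit lies in the weak*-closed set $\mathcal{N}$ and equals $f$ by Hausdorffness of $\sigma$. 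Producing this bounded net is the main obstacle, since the defining $\sigma$-approximating net in $\mathcal{N}$ need not be $\mathcal{M}$-bounded. My plan is to combine the factorization $k=s_{2}u_{2}$ applied to $f$ (as in part (3), which yields the bounded representative $fs_{2}$) with the right $H^{\infty}$-invariance of $\mathcal{N}$, funneling approximations through bounded elements; equivalently, one reduces (1) to showing that $\mathcal{N}^{\perp}\cap L^{\alpha'}$ is $\sigma(L^{1},\mathcal{M})$-dense in the full $L^{1}$-annihilator $\mathcal{N}^{\perp}$, which is where the $\|\cdot\|_{1}$-dominating hypothesis and the subdiagonal structure of $H^{\infty}$ become essential.
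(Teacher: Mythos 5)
Your parts (2), (3) and (4) are essentially correct. For (2) you invoke Krein--Smulian on the unit ball, which is the route the paper takes in the commutative case (Theorem \ref{density1}); for the noncommutative statement the paper instead separates a point of $\overline{\mathcal{W}\cap \mathcal{M}}^{w^{\ast}}\setminus \mathcal{W}$ by an $L^{1}$-functional, but both arguments are valid. Your (3) is a direct approximation ($fs_{2}s_{n}\to f$ in $\sigma(L^{\alpha},L^{\alpha'})$) where the paper argues by contradiction; the mechanism is the same as the paper's: the factorization of $k=(1+|f|)^{-1}$ from Theorem \ref{factorization}, the observation $fs_{2}\in \mathcal{W}\cap\mathcal{M}$, and the estimate $\alpha'(g(fs_{2}))\leq \alpha'(g)\,\|fs_{2}\|$. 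Your (4) coincides with the paper's argument.

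The genuine gap is part (1), which you do not prove: you reduce it to producing, for $x\in \overline{\mathcal{N}}^{\sigma(L^{\alpha},L^{\alpha'})}\cap \mathcal{M}$, a norm-bounded net in $\mathcal{N}$ that $\sigma$-converges to $x$ (equivalently, to a density statement for $\mathcal{N}^{\perp}\cap L^{\alpha'}$ inside the $L^{1}$-annihilator), and you leave precisely that step as a plan. Moreover the plan as stated --- factorize the element (as in (3)) and use right $H^{\infty}$-invariance to ``funnel'' the approximating net through bounded elements --- does not obviously work: the members of the net already lie in $\mathcal{M}$; what is unbounded is the net itself, and multiplying it on the right by one fixed $s\in H^{\infty}$ obtained from a factorization of the limit does not control its operator norms. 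The paper closes the gap on the dual side, by factorizing the separating functional rather than the element: if $x\in \overline{\mathcal{N}}^{\sigma(L^{\alpha},L^{\alpha'})}\cap\mathcal{M}\setminus\mathcal{N}$, Hahn--Banach in the duality $(L^{1}(\mathcal{M},\tau))^{\#}=\mathcal{M}$ gives $\xi\in L^{1}$ with $\tau(\xi x)\neq 0$ and $\tau(\xi y)=0$ for all $y\in\mathcal{N}$; writing $\xi=|\xi^{\ast}|v$ and applying Theorem \ref{factorization} to $k=\phi(|\xi^{\ast}|)$, where $\phi(t)=\min\{1,1/t\}$ (so $k\in\mathcal{M}$, $k^{-1}\in L^{1}$), one gets $k=us$ with $s\in H^{\infty}$, $s^{-1}\in H^{1}$; choosing $t_{n}\in H^{\infty}$ with $\|t_{n}-s^{-1}\|_{1}\to 0$, traciality together with $\mathcal{N}H^{\infty}\subseteq\mathcal{N}$ gives $\tau(t_{n}s\xi y)=\tau(\xi y t_{n}s)=0$ for all $y\in\mathcal{N}$, while $\tau(t_{n}s\xi x)\to\tau(\xi x)\neq 0$ because $s\xi x\in\mathcal{M}$. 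Hence some $z=t_{N}s\xi\in\mathcal{M}\subseteq L^{\alpha'}$ annihilates $\mathcal{N}$, defines a $\sigma(L^{\alpha},L^{\alpha'})$-continuous functional, and satisfies $\tau(zx)\neq 0$, contradicting $x\in\overline{\mathcal{N}}^{\sigma(L^{\alpha},L^{\alpha'})}$. This bounded-annihilator upgrade is exactly the idea missing from your proposal; without it (or an equivalent), part (1) remains unestablished.
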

	\begin{proof}
		For (1), it is clear that $\mathcal{N}\subseteq \overline{ \mathcal{N}}^{\sigma(L^{\alpha}\left(
		 	\mathcal{M},\tau \right),L^{\alpha^{'}}\left(
		 	\mathcal{M},\tau \right))}\cap \mathcal{M}$. Assume, via contradiction, that $\mathcal{N}\subsetneq
		\overline{\mathcal{N}}^{\sigma(L^{\alpha}\left(
			\mathcal{M},\tau \right),L^{\alpha^{'}}\left(
			\mathcal{M},\tau \right))}\cap \mathcal{M}$. Note that $\mathcal{N}$ is
		a weak* closed linear subspace of $\mathcal{M}$ and $L^{1}(\mathcal{M},\tau )
		$ is the predual space of $(\mathcal{M},\tau )$. It follows from the
		Hahn-Banach theorem that there exist a $\xi \in L^{1}(\mathcal{M},\tau )$
		and an $x\in \overline{\mathcal{N}}^{\sigma(L^{\alpha}\left(
			\mathcal{M},\tau \right),L^{\alpha^{'}}\left(
			\mathcal{M},\tau \right))}\cap \mathcal{M}$ such that\newline
		(a) $\tau (\xi x)\neq 0$ and
		(b) $\tau (\xi y)=0$ for all $y\in \mathcal{N}$.\newline
		We claim that there exists a $z\in \mathcal{M}$ such that\newline
		(a$^{\prime }$) $\tau (zx)\neq 0$ and
		(b$^{\prime }$) $\tau (zy)=0$ for all $y\in \mathcal{N}$. Actually assume
		that $\xi =|\xi ^{\ast }|v$ is the polar decomposition of $\xi \in L^{1}(%
		\mathcal{M},\tau )$, where $v$ is a unitary element in $\mathcal{M}$ and $%
		|\xi ^{\ast }|$ is in $L^{1}(\mathcal{M},\tau )$ is positive. Let $f$ be a
		function on $[0,\infty )$ defined by the formula $f(t)=1$ for $0\leq t\leq 1$
		and $f(t)=1/t$ for $t>1$. We define $k=f(|\xi ^{\ast }|)$ by the functional
		calculus. Then by the construction of $f$, we know that $k\in \mathcal{M}$
		and $k^{-1}=f^{-1}(|\xi ^{\ast }|)\in L^{1}(\mathcal{M},\tau )$. It follows
		from theorem \ref{factorization} that there exist a unitary operator $u\in 
		\mathcal{M}$ and $s\in H^{\infty }$ such that $k=us$ and $s^{-1}\in H^{1}(%
		\mathcal{M},\tau )$. Therefore, we can further assume that $\{{t_{n}\}}%
		_{n=1}^{\infty }$ is a sequence of elements in $H^{\infty }$ such that $%
		\Vert s^{-1}-t_{n}\Vert _{1,\tau }\rightarrow 0$. Observe that\newline
		(i) Since $s,t_{n}$ are in $H^{\infty }$, for each $y\in \mathcal{N}$ we
		have that $yt_{n}s\in \mathcal{N}H^{\infty }\subseteq \mathcal{N}$ and $\tau
		(t_{n}s\xi y)=\tau (\xi yt_{n}s)=0$,\newline
		(ii) We have $s\xi =(u\ast u)s(|\xi ^{\ast }|v)=u\ast (k|\xi ^{\ast }|)v\in 
		\mathcal{M}$, by the definition of $k$,\newline
		(iii) From (a) and (i), we have $0\neq \tau (\xi x)=\tau (s^{-1}s\xi x)=%
		\underset{n\rightarrow \infty }{\lim }\tau (t_{n}s\xi x)$.\newline
		Combining (i), (ii) and (iii), we are able to find an $N\in \mathbb{Z}$ such
		that $z=t_{N}s\xi \in \mathcal{M}$ satisfying\newline
		(a$^{\prime }$) $\tau (zx)\neq 0$ and
		(b$^{\prime }$) $\tau (zy)=0$ for all $y\in \mathcal{N}$.\newline
		Recall that $x\in \overline{\mathcal{N}}^{\sigma(L^{\alpha}\left(
			\mathcal{M},\tau \right),L^{\alpha^{'}}\left(
			\mathcal{M},\tau \right))}$. Then there is
		a sequence $\{{x_{n}\}}\subseteq \mathcal{N}$ such that $x_{n}\rightarrow x$ in $\sigma(L^{\alpha}\left(
		\mathcal{M},\tau \right),L^{\alpha^{'}}\left(
		\mathcal{M},\tau \right))$ topology. We have\newline
		$|\tau (zx_{n})-\tau (zx)|\rightarrow 0$.\newline
		Combining with (b$^{\prime }$) we conclude that $\tau (zx)=\underset{%
			n\rightarrow \infty }{\lim }\tau (zx_{n})=0$. This contradicts with the
		result (a$^{\prime }$). Therefore, $\mathcal{N}=\overline{\mathcal{N}}^{\sigma(L^{\alpha}\left(
			\mathcal{M},\tau \right),L^{\alpha^{'}}\left(
			\mathcal{M},\tau \right))}\cap 
		\mathcal{M}$.
		
		For (2), let $\overline{\mathcal{W}\cap \mathcal{M}}^{w\ast }$ be the
		weak*-closure of $\mathcal{W}\cap \mathcal{M}$ in $\mathcal{M}$. In order to
		show that $\mathcal{W}\cap \mathcal{M}=\overline{\mathcal{W}\cap \mathcal{M}}%
		^{w\ast }$, it suffices to show that $\overline{\mathcal{W}\cap \mathcal{M}}%
		^{w\ast }\subseteq \mathcal{W}$. Assume, to the contrary, that $\overline{%
			\mathcal{W}\cap \mathcal{M}}^{w\ast }\nsubseteq \mathcal{W}$. Thus there
		exists an element $x$ in $\overline{\mathcal{W}\cap \mathcal{M}}^{w\ast
		}\subset \mathcal{M}\subseteq L^{\alpha }(\mathcal{M},\tau )$, but $x\notin 
		\mathcal{W}$. Since $\mathcal{W}$ is a $\sigma(L^{\alpha}\left(
		\mathcal{M},\tau \right),L^{\alpha^{'}}\left(
		\mathcal{M},\tau \right))$-closed subspace of $L^{\alpha }(%
		\mathcal{M},\tau )$, by the Hahn-Banach theorem, there exists a $\xi \in
		L^{1}(\mathcal{M},\tau )$ such that $\tau (\xi x)\neq 0$ and $\tau (\xi y)=0$
		for all $y\in \mathcal{W}$. Since $\xi \in L^{1}(\mathcal{M},\tau )$, the
		linear mapping $\tau _{\xi }:\mathcal{M}\rightarrow \mathcal{%
			\mathbb{C}
		}$, defined by $\tau _{\xi }(a)=\tau (\xi a)$ for all $a\in \mathcal{M}$ is
		weak*-continuous. Note that $x\in \overline{\mathcal{W}\cap \mathcal{M}}%
		^{w\ast }$ and $\tau (\xi y)=0$ for all $y\in \mathcal{W}$. We know that $%
		\tau (\xi x)=0$, which contradicts with the assumption that $\tau (\xi
		x)\neq 0$. Hence $\overline{\mathcal{W}\cap \mathcal{M}}^{w\ast }\subseteq 
		\mathcal{W}$, so $\mathcal{W}\cap \mathcal{M}=\overline{\mathcal{W}\cap 
			\mathcal{M}}^{w\ast }$.
		
		For (3), since $\mathcal{W}$ is $\sigma(L^{\alpha}\left(
		\mathcal{M},\tau \right),L^{\alpha^{'}}\left(
		\mathcal{M},\tau \right))$-closed, we have $\overline{
		\mathcal{W}\cap \mathcal{M}}^{\sigma(L^{\alpha}\left(
		\mathcal{M},\tau \right),L^{\alpha^{'}}\left(
		\mathcal{M},\tau \right))}\subseteq \mathcal{W}$. Now we assume $%
		\overline{\mathcal{W}\cap \mathcal{M}}^{\sigma(L^{\alpha}\left(
			\mathcal{M},\tau \right),L^{\alpha^{'}}\left(
			\mathcal{M},\tau \right))}\subsetneq \mathcal{W}\subseteq
		L^{\alpha }(\mathcal{M},\tau )$. By the Hahn-Banach theorem, there exists an $%
		x\in \mathcal{W}$ and $\xi \in L^{1}(\mathcal{M},\tau )$ such that $\tau
		(\xi x)\neq 0$ and $\tau (\xi y)=0$ for all $y\in \overline{\mathcal{W}\cap 
		\mathcal{M}}^{\sigma(L^{\alpha}\left(
		\mathcal{M},\tau \right),L^{\alpha^{'}}\left(
		\mathcal{M},\tau \right))}$. Let $x=v|x|$ be the polar decomposition of $x$ in $%
		L^{\alpha }(\mathcal{M},\tau )$, where $v$ is a unitary element in $\mathcal{%
			M}$. Let $f$ be a function on $[0,\infty )$ defined by the formula $f(t)=1$
		for $0\leq t\leq 1$ and $f(t)=1/t$ for $t>1$. We define $k=f(|x|)$ by the
		functional calculus. Then by the construction of $f$, we know that $k\in 
		\mathcal{M}$ and $k^{-1}=f^{-1}(|x|)\in L^{\alpha }(\mathcal{M},\tau )$. It
		follows from theorem \ref{factorization} that there exist a unitary operator 
		$u\in \mathcal{M}$ and $s\in H^{\infty }$ such that $k=su$ and $s^{-1}\in
		H^{\alpha }(\mathcal{M},\tau )$. A little computation shows that $|x|k\in 
		\mathcal{M}$ which implies that $xs=xsuu^{\ast }=xku^{\ast }=v(|x|k)u^{\ast
		}\in \mathcal{M}$. Since $s\in H^{\infty }$, we know $xs\in \mathcal{W}%
		H^{\infty }\subseteq \mathcal{W}$ and thus $xs\in \mathcal{W}\cap \mathcal{M}
		$. Furthermore, note that $(\mathcal{W}\cap \mathcal{M})H^{\infty }\subseteq 
		\mathcal{W}\cap \mathcal{M}$. Thus, if $t\in H^{\infty }$ we see $xst\in 
		\mathcal{W}\cap \mathcal{M}$, and $\tau (\xi xst)=0$.
		 Since $H^{\alpha }(\mathcal{M},\tau )= \overline {H^{\infty}}^{\sigma(L^{\alpha}\left(
		 	\mathcal{M},\tau \right),L^{\alpha^{'}}\left(
		 	\mathcal{M},\tau \right))}\cap L^{\alpha }(\mathcal{M},\tau )$,
		$\forall \in H^{\alpha }(\mathcal{M},\tau ) $ and there is a net ${t_{n}}$ in $ H^{\infty}$ such that $t_{n}\rightarrow t$ in $\sigma(L^{\alpha}\left(
		\mathcal{M},\tau \right),L^{\alpha^{'}}\left(
		\mathcal{M},\tau \right))$ topology. We have $\xi xs \in L^{\alpha^{'} }(\mathcal{M},\tau ) $ because $\alpha^{'}(\xi xs)\leq \alpha^{'}(\xi)\|xs\|$. Therefore, $\tau (\xi xst_{n})\rightarrow \tau (\xi xst)$, which follows that $\tau (\xi xst)=0$ for all $t\in H^{\alpha }(%
		\mathcal{M},\tau )$. Since $s^{-1}\in H^{\alpha }(\mathcal{M},\tau )$, we
		see that $\tau (\xi x)=\tau (\xi xss^{-1})=0$. This contradicts with the
		assumption that $\tau (\xi x)\neq 0$ . Therefore $\mathcal{W}=\overline{\mathcal{W}%
		\cap \mathcal{M}}^{\sigma(L^{\alpha}\left(
		\mathcal{M},\tau \right),L^{\alpha^{'}}\left(
		\mathcal{M},\tau \right))}$.
		
		For (4), assume that $\mathcal{S}$ is a subspace of $\mathcal{M}$ such that $%
		\mathcal{S}H^{\infty }\subset \mathcal{S}$ and $\overline{\mathcal{S}}%
		^{w\ast }$ is weak*-closure of $\mathcal{S}$ in $\mathcal{M}$. Then $\overline{\mathcal{S}}^{\sigma(L^{\alpha}\left(
			\mathcal{M},\tau \right),L^{\alpha^{'}}\left(
			\mathcal{M},\tau \right))}H^{\infty }\subseteq 
		\overline{ \mathcal{S}}^{\sigma(L^{\alpha}\left(
			\mathcal{M},\tau \right),L^{\alpha^{'}}\left(
			\mathcal{M},\tau \right))}$.
		Note that $\mathcal{S}\subseteq \overline{\mathcal{S}}^{\sigma(L^{\alpha}\left(
			\mathcal{M},\tau \right),L^{\alpha^{'}}\left(
			\mathcal{M},\tau \right))}\cap \mathcal{M%
		}$. From (2), we know that $\overline{\mathcal{S}}^{\sigma(L^{\alpha}\left(
		\mathcal{M},\tau \right),L^{\alpha^{'}}\left(
		\mathcal{M},\tau \right))}\cap \mathcal{M}$ is
		weak*-closed. Therefore, $\overline{\mathcal{S}}^{w\ast }\subseteq \overline{\mathcal{S}}^{\sigma(L^{\alpha}\left(
			\mathcal{M},\tau \right),L^{\alpha^{'}}\left(
			\mathcal{M},\tau \right))}\cap \mathcal{M}$.\\
		 Since $\overline{\overline{\mathcal{S}}%
		^{w\ast }}^{\sigma(L^{\alpha}\left(
		\mathcal{M},\tau \right),L^{\alpha^{'}}\left(
		\mathcal{M},\tau \right))}\subseteq\overline{ \overline{\mathcal{S}}^{\sigma(L^{\alpha}\left(
		\mathcal{M},\tau \right),L^{\alpha^{'}}\left(
		\mathcal{M},\tau \right))}\cap \mathcal{M}}^{\sigma(L^{\alpha}\left(
	\mathcal{M},\tau \right),L^{\alpha^{'}}\left(
	\mathcal{M},\tau \right))}= \overline{\mathcal{S}}^{\sigma(L^{\alpha}\left(
		\mathcal{M},\tau \right),L^{\alpha^{'}}\left(
		\mathcal{M},\tau \right))}$, we have $\overline{\mathcal{%
			S}}^{\sigma(L^{\alpha}\left(
		\mathcal{M},\tau \right),L^{\alpha^{'}}\left(
		\mathcal{M},\tau \right))}=\overline{\overline{\mathcal{S}}^{w\ast }}^{\sigma(L^{\alpha}\left(
		\mathcal{M},\tau \right),L^{\alpha^{'}}\left(
		\mathcal{M},\tau \right))}$.
	\end{proof}
	
	 Before we obtain our main result in the paper, we call the definitions of internal column sum of a family of subspaces, and
	 the lemma in \cite{B.L.}.
	 
	 \begin{definition}
	 	\text{(from }\cite{B.L.}\text{)} Let $\mathcal{M}$ be a finite
	 	von Neumann algebra with a faithful, normal, tracial state $\tau $ and $\alpha $ be a 
	 	normalized, unitarily invariant, $\|\centerdot\|_{1}$-dominating norm. Suppose $X$ be a closed subspace of $L^{\alpha }(\mathcal{M},\tau )$ with $%
	 	\alpha \in N_{\Delta }\left( \mathcal{M},\tau \right) $. Then $X$ is called
	 	an internal column sum of a family of closed subspaces $\{X_{\lambda
	 	}\}_{\lambda \in \Lambda }$ of $L^{\alpha }(\mathcal{M},\tau )$, denoted by 
	 	$X=\bigoplus_{\lambda \in \Lambda }^{col}X_{\lambda }$ if\newline
	 	(1) $X_{\mu }^{\ast }X_{\lambda }=\{0\}$ for all distinct $\lambda ,\mu \in
	 	\Lambda $, and\newline
	 	(2) $X=\overline{span\{X_{\lambda }:\lambda \in \Lambda \}}^{\sigma(L^{\alpha}\left(
	 		\mathcal{M},\tau \right),L^{\alpha^{'}}\left(
	 		\mathcal{M},\tau \right))}$.
	 \end{definition}
	
	\begin{lemma}
		\label{colsum}\text{(from }\cite{B.L.}\text{)} Let $\mathcal{M}$ be a finite
		von Neumann algebra with a faithful, normal, tracial state $\tau $ and $%
		\alpha $ be a normalized, unitarily invariant $\Vert \cdot \Vert _{1 }$%
		-dominating norm on $\mathcal{M}$. Let $H^{\infty }$ be a finite
		subdiagonal subalgebra of $\mathcal{M}$ and $\mathcal{D}=H^{\infty }\cap
		(H^{\infty })^{\ast }$. Assume that $\mathcal{W}\subseteq \mathcal{M}$ is a
		weak*-closed subspace such that $\mathcal{W}H^{\infty }\subseteq \mathcal{W}$
		. Then there exists a weak*-closed subspace $\mathcal{Y}$ of $\mathcal{M}$
		and a family $\{u_{\lambda }\}_{\lambda \in \Lambda }$of partial isometries
		in $\mathcal{M}$ such that\newline
		(1) $u_{\lambda }^{\ast }\mathcal{Y}=0$ for all $\lambda \in \Lambda $,%
		\newline
		(2) $u_{\lambda }^{\ast }u_{\lambda }\in \mathcal{D}$ and $u_{\lambda
		}^{\ast }u_{\mu }=0$ for all $\lambda ,\mu \in \Lambda $ with $\lambda \neq
		\mu $,\newline
		(3) $\mathcal{Y}=\overline{H_{0}^{\infty }\mathcal{Y}}^{w\ast }$,\newline
		(4) $\mathcal{W}=\mathcal{Y}\oplus ^{col}(\oplus _{\lambda \in
			\Lambda }^{col}u_{\lambda }H^{\infty })$.
	\end{lemma}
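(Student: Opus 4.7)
The plan is to reduce this $L^{\alpha}$ theorem to its weak*-version, Lemma \ref{colsum}, by intersecting $\mathcal{W}$ with $\mathcal{M}$ and then re-extending via $\sigma(L^{\alpha},L^{\alpha^{'}})$-closures; I read ``closed'' in the hypothesis as $\sigma(L^{\alpha},L^{\alpha^{'}})$-closed, so that Theorem \ref{density} applies. First I set $\mathcal{W}_{0}:=\mathcal{W}\cap\mathcal{M}$: from $\mathcal{W}H^{\infty}\subseteq\mathcal{W}$ we have $\mathcal{W}_{0}H^{\infty}\subseteq\mathcal{W}_{0}$, while Theorem \ref{density}(2)--(3) says $\mathcal{W}_{0}$ is weak*-closed in $\mathcal{M}$ and $\mathcal{W}=\overline{\mathcal{W}_{0}}^{\sigma(L^{\alpha},L^{\alpha^{'}})}$. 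Applying Lemma \ref{colsum} to $\mathcal{W}_{0}$ yields a weak*-closed $\mathcal{Y}_{0}\subseteq\mathcal{M}$ and partial isometries $\{u_{\lambda}\}_{\lambda\in\Lambda}\subseteq\mathcal{M}$ with $u_{\lambda}^{\ast}\mathcal{Y}_{0}=0$, $u_{\lambda}^{\ast}u_{\lambda}\in\mathcal{D}$, $u_{\lambda}^{\ast}u_{\mu}=0$ for $\lambda\neq\mu$, $\mathcal{Y}_{0}=\overline{H_{0}^{\infty}\mathcal{Y}_{0}}^{w^{\ast}}$, and $\mathcal{W}_{0}=\mathcal{Y}_{0}\oplus^{col}(\oplus_{\lambda}^{col}u_{\lambda}H^{\infty})$. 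I define $\mathcal{Y}:=\overline{\mathcal{Y}_{0}}^{\sigma(L^{\alpha},L^{\alpha^{'}})}$ and keep the same $\{u_{\lambda}\}$; these will be the objects satisfying (1)--(4).

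The workhorse tool is that left- and right-multiplication by any $a\in\mathcal{M}$ is $\sigma(L^{\alpha},L^{\alpha^{'}})$-continuous on $L^{\alpha}(\mathcal{M},\tau)$, because $L^{\alpha^{'}}$ is an $\mathcal{M}$-bimodule. This immediately transfers $u_{\lambda}^{\ast}\mathcal{Y}_{0}=0$ up to $u_{\lambda}^{\ast}\mathcal{Y}=0$, giving (1); (2) is unaffected. For (3) I first extract the unstated right-invariance $\mathcal{Y}_{0}H^{\infty}\subseteq\mathcal{Y}_{0}$: for $y\in\mathcal{Y}_{0}$ and $h\in H^{\infty}$, write $yh=y'+\sum_{\lambda}u_{\lambda}h_{\lambda}$ in the column sum, apply $u_{\mu}^{\ast}$, and use $u_{\mu}^{\ast}y=0$ to obtain $u_{\mu}^{\ast}u_{\mu}h_{\mu}=0$, whence $u_{\mu}h_{\mu}=u_{\mu}(u_{\mu}^{\ast}u_{\mu})h_{\mu}=0$ and $yh=y'\in\mathcal{Y}_{0}$. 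Consequently $(H_{0}^{\infty}\mathcal{Y}_{0})H^{\infty}\subseteq H_{0}^{\infty}\mathcal{Y}_{0}$, and Theorem \ref{density}(4) applied to $\mathcal{S}=H_{0}^{\infty}\mathcal{Y}_{0}$ gives
\[
\mathcal{Y}=\overline{\mathcal{Y}_{0}}^{\sigma}=\overline{\,\overline{H_{0}^{\infty}\mathcal{Y}_{0}}^{w^{\ast}}\,}^{\sigma}=\overline{H_{0}^{\infty}\mathcal{Y}_{0}}^{\sigma}\subseteq\overline{H_{0}^{\infty}\mathcal{Y}}^{\sigma};
\]
the reverse inclusion follows from $H_{0}^{\infty}\mathcal{Y}\subseteq\mathcal{Y}$ by another transfer starting from $H_{0}^{\infty}\mathcal{Y}_{0}\subseteq\mathcal{Y}_{0}$.

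For (4), I first identify $u_{\lambda}H^{\alpha}=\overline{u_{\lambda}H^{\infty}}^{\sigma(L^{\alpha},L^{\alpha^{'}})}$ using $H^{\alpha}=\overline{H^{\infty}}^{\sigma}\cap L^{\alpha}$ together with $\sigma$-continuity of left multiplication by $u_{\lambda}$. The column-orthogonality relations $\mathcal{Y}^{\ast}(u_{\lambda}H^{\alpha})=0$ and $(u_{\mu}H^{\alpha})^{\ast}(u_{\lambda}H^{\alpha})=0$ for $\lambda\neq\mu$ transfer from the $\mathcal{M}$-level identities by the same continuity. Finally, the linear span $\mathcal{S}=\mathrm{span}(\mathcal{Y}_{0}\cup\bigcup_{\lambda}u_{\lambda}H^{\infty})$ is a right $H^{\infty}$-module with $\overline{\mathcal{S}}^{w^{\ast}}=\mathcal{W}_{0}$, so Theorem \ref{density}(4) gives $\overline{\mathcal{S}}^{\sigma}=\overline{\mathcal{W}_{0}}^{\sigma}=\mathcal{W}$; comparing with $\overline{\mathcal{S}}^{\sigma}\subseteq\overline{\mathcal{Y}+\sum_{\lambda}u_{\lambda}H^{\alpha}}^{\sigma}\subseteq\mathcal{W}$ yields the decomposition in (4). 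The hardest step will be (3): extracting the unstated right-invariance $\mathcal{Y}_{0}H^{\infty}\subseteq\mathcal{Y}_{0}$ from Lemma \ref{colsum} and then invoking Theorem \ref{density}(4) at exactly the right moment so that the weak*- and $\sigma$-closures of $H_{0}^{\infty}\mathcal{Y}_{0}$ collapse to the same space; beyond that, the proof is careful topological bookkeeping carried by the $\sigma$-continuity of multiplication by $\mathcal{M}$-elements.
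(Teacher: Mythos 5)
You have proved the wrong statement. The statement at hand is Lemma \ref{colsum} itself: its hypothesis is that $\mathcal{W}$ is a \emph{weak*-closed} subspace of $\mathcal{M}$ (not a $\sigma(L^{\alpha},L^{\alpha^{'}})$-closed subspace of $L^{\alpha}(\mathcal{M},\tau)$), and its conclusion is the column-sum decomposition $\mathcal{W}=\mathcal{Y}\oplus^{col}(\oplus_{\lambda\in\Lambda}^{col}u_{\lambda}H^{\infty})$ inside $\mathcal{M}$, with weak*-closures throughout. Your argument opens with ``Applying Lemma \ref{colsum} to $\mathcal{W}_{0}$ yields\dots'', i.e.\ it invokes the very lemma it is supposed to establish, and then devotes all its effort to transferring that decomposition from $\mathcal{M}$ up to $L^{\alpha}(\mathcal{M},\tau)$. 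As a proof of Lemma \ref{colsum} this is circular; what you have actually written is a sketch of the paper's proof of Theorem \ref{main}, which is a different statement with a different hypothesis.

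For the record, the paper offers no proof of Lemma \ref{colsum}: it is imported verbatim from Blecher--Labuschagne \cite{B.L.}, where it is established by a genuinely different, $L^{2}$-based argument --- one passes to the $L^{2}$-closure of $\mathcal{W}$, applies the $L^{2}$ Beurling decomposition there, obtains the partial isometries $u_{\lambda}$ from a quasi-basis of the wandering subspace (the orthogonal complement of the $L^{2}$-closure of $\mathcal{W}H_{0}^{\infty}$ inside that of $\mathcal{W}$), and then intersects back with $\mathcal{M}$ to verify (1)--(4) at the weak* level. None of that machinery appears in your proposal, so even setting aside the circularity there is no content here that addresses the lemma itself. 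Had the target been Theorem \ref{main}, your outline does track the paper's proof of that theorem quite closely (intersect with $\mathcal{M}$, apply Theorem \ref{density}(2)--(4), take $\sigma(L^{\alpha},L^{\alpha^{'}})$-closures, and identify $\overline{u_{\lambda}H^{\infty}}^{\sigma}$ with $u_{\lambda}H^{\alpha}$); but that is not the statement you were asked to prove.
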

	
	Now we are ready to prove our main result of the paper, the
	generalized Beurling Theorem  for noncommutative Hardy spaces associated with finite von Neumann algebras.
	
	\begin{theorem}
		\label{main}Let $\mathcal{M}$ be a finite von Neumann algebra with a
		faithful, normal, tracial state $\tau $ and $\alpha $ be a 
		normalized, unitarily invariant, $\|\|_{1}$-dominating norm on $\mathcal{M}$.Let $H^{\infty }$
		be a finite subdiagonal subalgebra of $\mathcal{M}$ and $\mathcal{D}%
		=H^{\infty }\cap (H^{\infty })^{\ast }$. If $\mathcal{W}$ is a closed
		subspace of $L^{\alpha }(\mathcal{M},\tau )$ such that $\mathcal{W}H^{\infty
		}\subseteq \mathcal{W}$, then there exists a $\sigma(L^{\alpha}\left(
		\mathcal{M},\tau \right),L^{\alpha^{'}}\left(
		\mathcal{M},\tau \right))$ closed subspace $\mathcal{Y}$
		of $L^{\alpha }(\mathcal{M},\tau )$ and a family $\{u_{\lambda }\}_{\lambda
			\in \Lambda }$ of partial isometries in $\mathcal{M}$ such that\newline
		(1) $u_{\lambda }^{\ast }\mathcal{Y}=0$ for all $\lambda \in \Lambda $,%
		\newline
		(2) $u_{\lambda }^{\ast }u_{\lambda }\in \mathcal{D}$ and $u_{\lambda
		}^{\ast }u_{\mu }=0$ for all $\lambda ,\mu \in \Lambda $ with $\lambda \neq
		\mu $,\newline
		(3) $\mathcal{Y}=\overline{H_{0}^{\infty }\mathcal{Y}}^{\sigma(L^{\alpha}\left(
			\mathcal{M},\tau \right),L^{\alpha^{'}}\left(
			\mathcal{M},\tau \right))}$,\newline
		(4) $\mathcal{W}=\mathcal{Y}\oplus ^{col}(\oplus _{\lambda \in
			\Lambda }^{col}u_{\lambda }H^{\alpha }).$
	\end{theorem}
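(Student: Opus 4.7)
The plan is to lift the Blecher--Labuschagne decomposition (Lemma~\ref{colsum}) from $\mathcal{M}$ up to $L^{\alpha}(\mathcal{M},\tau)$ using the density theorem (Theorem~\ref{density}) as the bridge. Throughout I interpret ``closed'' as $\sigma(L^{\alpha}(\mathcal{M},\tau),L^{\alpha'}(\mathcal{M},\tau))$-closed, and abbreviate this topology as $\sigma$.

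First I would set $\mathcal{N}=\mathcal{W}\cap\mathcal{M}$. The invariance $\mathcal{W}H^{\infty}\subseteq\mathcal{W}$ immediately gives $\mathcal{N}H^{\infty}\subseteq\mathcal{N}$; Theorem~\ref{density}(2) ensures $\mathcal{N}$ is weak*-closed in $\mathcal{M}$, and Theorem~\ref{density}(3) gives $\mathcal{W}=\overline{\mathcal{N}}^{\sigma}$. Applying Lemma~\ref{colsum} to $\mathcal{N}$ produces a weak*-closed subspace $\mathcal{Y}_{0}\subseteq\mathcal{M}$ and a family $\{u_{\lambda}\}_{\lambda\in\Lambda}$ of partial isometries satisfying analogs of (1)--(4) with $\mathcal{Y}_{0}$, weak*-closure, and $H^{\infty}$ in place of $\mathcal{Y}$, $\sigma$-closure, and $H^{\alpha}$. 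Define $\mathcal{Y}=\overline{\mathcal{Y}_{0}}^{\sigma}$. The observation I will use repeatedly is that since $L^{\alpha'}(\mathcal{M},\tau)\subseteq L^{1}(\mathcal{M},\tau)$, weak*-convergence of a net in $\mathcal{M}$ implies $\sigma$-convergence in $L^{\alpha}$, so the weak*-topology on $\mathcal{M}$ is finer than the restriction of $\sigma$.

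Next I would verify conditions (1)--(4). For (1), left multiplication by $u_{\lambda}^{\ast}\in\mathcal{M}$ is $\sigma$-continuous on $L^{\alpha}$ (via $\tau(u_{\lambda}^{\ast}xy)=\tau(x\cdot yu_{\lambda}^{\ast})$ with $yu_{\lambda}^{\ast}\in L^{\alpha'}$), so $u_{\lambda}^{\ast}\mathcal{Y}_{0}=0$ extends to $u_{\lambda}^{\ast}\mathcal{Y}=0$. Condition (2) is inherited verbatim from Lemma~\ref{colsum}. For (3), $H_{0}^{\infty}\mathcal{Y}_{0}\subseteq\mathcal{Y}_{0}$ extends by $\sigma$-continuity of left multiplication to $H_{0}^{\infty}\mathcal{Y}\subseteq\mathcal{Y}$, giving one inclusion; the other inclusion follows from $\mathcal{Y}_{0}=\overline{H_{0}^{\infty}\mathcal{Y}_{0}}^{w\ast}\subseteq\overline{H_{0}^{\infty}\mathcal{Y}_{0}}^{\sigma}$ (by the finer-topology remark), whence
\[
\mathcal{Y}=\overline{\mathcal{Y}_{0}}^{\sigma}\subseteq\overline{H_{0}^{\infty}\mathcal{Y}_{0}}^{\sigma}\subseteq\overline{H_{0}^{\infty}\mathcal{Y}}^{\sigma}.
\]
For (4), the identity $\overline{u_{\lambda}H^{\infty}}^{\sigma}=u_{\lambda}H^{\alpha}$ follows from $\sigma$-continuity of left multiplication by $u_{\lambda}$ and the very definition of $H^{\alpha}$; combining this with $\mathcal{W}=\overline{\mathcal{N}}^{\sigma}$ and the $\mathcal{M}$-level decomposition of $\mathcal{N}$ yields
\[
\mathcal{W}=\overline{\mathcal{Y}_{0}+\textstyle\sum_{\lambda}u_{\lambda}H^{\infty}}^{\sigma}=\overline{\mathcal{Y}+\textstyle\sum_{\lambda}u_{\lambda}H^{\alpha}}^{\sigma},
\]
and the column-sum orthogonalities $\mathcal{Y}^{\ast}u_{\lambda}=\{0\}$ and $(u_{\mu}H^{\alpha})^{\ast}(u_{\lambda}H^{\alpha})=\{0\}$ for $\lambda\neq\mu$ are immediate from (1), (2), and $u_{\mu}^{\ast}u_{\lambda}=0$.

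The main obstacle is bookkeeping the interplay between the weak*-topology on $\mathcal{M}$ and the $\sigma$-topology on $L^{\alpha}$: one must verify at each step that multiplication by elements of $\mathcal{M}$ on either side is $\sigma$-continuous (which reduces to $L^{\alpha'}$ being a two-sided $\mathcal{M}$-module under the trace pairing) and that the ``finer topology'' inclusion $\overline{A}^{w\ast}\subseteq\overline{A}^{\sigma}$ is applied in the correct direction. Once these topological facts are in hand, the theorem follows mechanically from Lemma~\ref{colsum} via the density theorem, and no genuinely new factorization or approximation argument is required beyond those already established in Theorems~\ref{factorization} and~\ref{density}.
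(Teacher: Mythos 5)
Your route is exactly the paper's: set $\mathcal{N}=\mathcal{W}\cap\mathcal{M}$, use Theorem~\ref{density}(2) to see it is weak*-closed and Theorem~\ref{density}(3) to recover $\mathcal{W}=\overline{\mathcal{N}}^{\sigma}$, apply Lemma~\ref{colsum} to $\mathcal{N}$, put $\mathcal{Y}=\overline{\mathcal{Y}_{0}}^{\sigma}$, and transfer (1)--(4) using $\sigma$-continuity of multiplication by elements of $\mathcal{M}$ (which, as you note, rests on $\alpha'(\xi a)\leq\alpha'(\xi)\Vert a\Vert$, the same module property the paper invokes). Your verifications of (1), (2), (3) and of $\mathcal{W}=\overline{\mathcal{Y}+\sum_{\lambda}u_{\lambda}H^{\alpha}}^{\sigma}$ are sound; your argument for (3) via the containment $\overline{A}^{w\ast}\subseteq\overline{A}^{\sigma}$ is in fact a little more direct than the paper's, which routes the same inclusion through parts (1) and (3) of Theorem~\ref{density}.

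The one genuine gap is the identity $\overline{u_{\lambda}H^{\infty}}^{\sigma}=u_{\lambda}H^{\alpha}$, which you assert follows from ``$\sigma$-continuity of left multiplication by $u_{\lambda}$ and the very definition of $H^{\alpha}$.'' Continuity only gives the easy containment $u_{\lambda}H^{\alpha}\subseteq\overline{u_{\lambda}H^{\infty}}^{\sigma}$: the image of a $\sigma$-closed set under a $\sigma$-continuous map need not be closed, so nothing so far shows $u_{\lambda}H^{\alpha}$ is $\sigma$-closed, and without that your summands in (4) are not closed subspaces and the column-sum decomposition does not literally hold with $u_{\lambda}H^{\alpha}$ as summands. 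This reverse containment is the only nontrivial computation in the paper's proof, and it uses precisely the structure your condition (2) provides: if $u_{\lambda}h_{n}\rightarrow x$ in the $\sigma$-topology with $h_{n}\in H^{\infty}$, then $u_{\lambda}^{\ast}u_{\lambda}h_{n}\in H^{\infty}$ because $u_{\lambda}^{\ast}u_{\lambda}\in\mathcal{D}\subseteq H^{\infty}$, and $u_{\lambda}^{\ast}u_{\lambda}h_{n}\rightarrow u_{\lambda}^{\ast}x$, so $u_{\lambda}^{\ast}x\in H^{\alpha}(\mathcal{M},\tau)$; moreover $u_{\lambda}u_{\lambda}^{\ast}u_{\lambda}h_{n}=u_{\lambda}h_{n}$ for every $n$, whence in the limit $x=u_{\lambda}(u_{\lambda}^{\ast}x)\in u_{\lambda}H^{\alpha}$. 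With this argument inserted, your proof coincides with the paper's.
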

	
	\begin{proof}
		Suppose $\mathcal{W}$ is a closed subspace of $
		L^{\alpha }(\mathcal{M},\tau )$ such that $\mathcal{W}H^{\infty }\subset 
		\mathcal{W}$. Then it follows from part(2) of the theorem \ref{density} that 
		$\mathcal{W}\cap \mathcal{M}$ is weak* closed in $(\mathcal{M},\tau ) $, we also notice $L^{\infty }(\mathcal{M},\tau )=\mathcal{%
			M}$, and $H^{\alpha }(\mathcal{M},\tau )= \overline {H^{\infty}}^{\sigma(L^{\alpha}\left(
			\mathcal{M},\tau \right),L^{\alpha^{'}}\left(
			\mathcal{M},\tau \right))}\cap L^{\alpha }(\mathcal{M},\tau )$. It follows from the lemma \ref{colsum} that%
		\begin{equation*}
		\mathcal{W}\cap \mathcal{M}=\mathcal{Y}_{1}\bigoplus^{col%
		}(\bigoplus_{i\in \mathcal{I}}^{col}u_{i}H^{\infty }),
		\end{equation*}%
		where $\mathcal{Y}_{1}$ is a closed subspace of $L^{\infty }(\mathcal{M}%
		,\tau )$ such that $\mathcal{Y}_{1}=\overline{\mathcal{Y}_{1}H_{0}^{\infty }}%
		^{w\ast }$, and where $u_{i}$ are partial isometries in $\mathcal{W}\cap 
		\mathcal{M}$ with $u_{j}^{\ast }u_{i}=0$ if $i\neq j$ and with $u_{i}^{\ast
		}u_{i}\in \mathcal{D}$. Moreover, for each $i,u_{i}^{\ast }\mathcal{Y}%
		_{1}=\{0\}$, left multiplication by the $u_{i}u_{i}^{\ast }$ are contractive
		projections from $\mathcal{W}\cap \mathcal{M}$ onto the summands $%
		u_{i}H^{\infty }$, and left multiplication by $I-\sum_{i}u_{i}u_{i}^{\ast }$
		is a contractive projection from $\mathcal{W}\cap \mathcal{M}$ onto $%
		\mathcal{Y}_{1}$.
		
		Let $\mathcal{Y}=\overline{\mathcal{Y}_{1}}^{\sigma(L^{\alpha}\left(
			\mathcal{M},\tau \right),L^{\alpha^{'}}\left(
			\mathcal{M},\tau \right))}$. It is not hard to verify that
		for each $i,u_{i}^{\ast }\mathcal{M}=\{0\}$. We also claim that $%
		\overline {u_{i}H^{\infty }}^{\sigma(L^{\alpha}\left(
			\mathcal{M},\tau \right),L^{\alpha^{'}}\left(
			\mathcal{M},\tau \right))}\cap L^{\alpha }(\mathcal{M},\tau )=u_{i}H^{\alpha }=u_{i}(\overline {H^{\infty}}^{\sigma(L^{\alpha}\left(
			\mathcal{M},\tau \right),L^{\alpha^{'}}\left(
			\mathcal{M},\tau \right))}\cap L^{\alpha }(\mathcal{M},\tau ))$. In fact it is obvious that $%
		\overline {u_{i}H^{\infty }}^{\sigma(L^{\alpha}\left(
			\mathcal{M},\tau \right),L^{\alpha^{'}}\left(
			\mathcal{M},\tau \right))}\cap L^{\alpha }(\mathcal{M},\tau )\supseteq u_{i}H^{\alpha }$. We will need only
		to show that $\overline {u_{i}H^{\infty }}^{\sigma(L^{\alpha}\left(
			\mathcal{M},\tau \right),L^{\alpha^{'}}\left(
			\mathcal{M},\tau \right))}\cap L^{\alpha }(\mathcal{M},\tau ) \subseteq u_{i}H^{\alpha }$. Suppose $x\in \overline{
		u_{i}H^{\infty }}^{\sigma(L^{\alpha}\left(
		\mathcal{M},\tau \right),L^{\alpha^{'}}\left(
		\mathcal{M},\tau \right)) }\cap L^{\alpha }(\mathcal{M},\tau )$, there is a net $%
		\{{x_{n}\}}_{n=1}^{\infty }\subseteq H^{\infty }$ such that $u_{i}x_{n} \rightarrow x$ in $ \sigma(L^{\alpha}\left(
		\mathcal{M},\tau \right),L^{\alpha^{'}}\left(
		\mathcal{M},\tau \right))$ topology. By the choice of $u_{i},$ we know that $u_{i}^{\ast}u_{i}\in \mathcal{D}\subseteq
		H^{\infty }$, so $u_{i}^{\ast}u_{i}x_{n}\in H^{\infty }$ for each $n\geq 1$. 
		So $u_{i}^{\ast }u_{i}x_{n} \rightarrow u_{i}^{\ast }x$ in $ \sigma(L^{\alpha}\left(
		\mathcal{M},\tau \right),L^{\alpha^{'}}\left(
		\mathcal{M},\tau \right))$ topology, we obtain that $u_{i}^{\ast }x\in  \overline {H^{\infty}}^{\sigma(L^{\alpha}\left(
			\mathcal{M},\tau \right),L^{\alpha^{'}}\left(
			\mathcal{M},\tau \right))}\cap L^{\alpha }(\mathcal{M},\tau )=
		H^{\alpha }\left(
		\mathcal{M},\tau \right)$. Again from the choice of $u_{i}$, we know that $%
		u_{i}u_{i}^{\ast }u_{i}x_{n}=u_{i}x_{n}$ for each $n\geq 1$. This implies
		that $x=u_{i}(u_{i}^{\ast }x)\in u_{i}H^{\alpha }$. Thus we conclude that $\overline {u_{i}H^{\infty }}^{\sigma(L^{\alpha}\left(
			\mathcal{M},\tau \right),L^{\alpha^{'}}\left(
			\mathcal{M},\tau \right))}\subseteq u_{i}H^{\alpha }$.
		So $\overline {u_{i}H^{\infty }}^{\sigma(L^{\alpha}\left(
			\mathcal{M},\tau \right),L^{\alpha^{'}}\left(
			\mathcal{M},\tau \right))}= u_{i}H^{\alpha }$. Now from parts (3) and (4) of the theorem %
		\ref{density} and from the definition of internal column sum, it follows that%
		\begin{align*}
		\mathcal{W}& =\overline {\mathcal{W}\cap \mathcal{M}}^{\sigma(L^{\alpha}\left(
			\mathcal{M},\tau \right),L^{\alpha^{'}}\left(
			\mathcal{M},\tau \right))}=\overline{\overline{span\{%
			\mathcal{Y}_{1},u_{i}H^{\infty }:i\in \mathcal{I}\}}^{\ast }}^{\sigma(L^{\alpha}\left(
		\mathcal{M},\tau \right),L^{\alpha^{'}}\left(
		\mathcal{M},\tau \right))}\\
	   & =\overline {span\{\mathcal{Y}_{1},u_{i}H^{\infty }:i\in \mathcal{I}\}}^{\sigma(L^{\alpha}\left(
		\mathcal{M},\tau \right),L^{\alpha^{'}}\left(
		\mathcal{M},\tau \right))} \\
		& =\overline{span\{\mathcal{Y},u_{i}H^{\alpha }:i\in \mathcal{I}\}}^{\sigma(L^{\alpha}\left(
			\mathcal{M},\tau \right),L^{\alpha^{'}}\left(
			\mathcal{M},\tau \right))}\\
			&=\mathcal{Y}\bigoplus^{col}(\bigoplus_{i\in \mathcal{I}}^{col%
		}u_{i}H^{\alpha }).
		\end{align*}%
		\newline
		
		Next, we will verify that $\mathcal{Y}=\overline{\mathcal{Y}H_{0}^{\infty }}^{\sigma(L^{\alpha}\left(
				\mathcal{M},\tau \right),L^{\alpha^{'}}\left(
				\mathcal{M},\tau \right))} $. Recall that $\mathcal{Y}=\overline{\mathcal{Y}_{1}}^{\sigma(L^{\alpha}\left(
		\mathcal{M},\tau \right),L^{\alpha^{'}}\left(
		\mathcal{M},\tau \right))}$. It follows from
		part (1) of the theorem \ref{density}, we have\newline
		\begin{equation*}
		\overline {\mathcal{Y}_{1}H_{0}^{\infty }}^{\sigma(L^{\alpha}\left(
			\mathcal{M},\tau \right),L^{\alpha^{'}}\left(
			\mathcal{M},\tau \right))}\cap \mathcal{M}=\overline{%
			\mathcal{Y}_{1}H_{0}^{\infty }}^{w\ast }=\mathcal{Y}_{1}.
		\end{equation*}%
		Hence from part (3) of the theorem \ref{density} we have that\newline
			\begin{align*}
		\mathcal{Y}&\supseteq \overline {\mathcal{Y}H_{0}^{\infty }}^{\sigma(L^{\alpha}\left(
			\mathcal{M},\tau \right),L^{\alpha^{'}}\left(
			\mathcal{M},\tau \right))}\supseteq
		\overline{\mathcal{Y}_{1}H_{0}^{\infty }}^{\sigma(L^{\alpha}\left(
			\mathcal{M},\tau \right),L^{\alpha^{'}}\left(
			\mathcal{M},\tau \right))} \\
		&= \overline{\overline{\mathcal{Y}%
		_{1}H_{0}^{\infty }}^{\sigma(L^{\alpha}\left(
		\mathcal{M},\tau \right),L^{\alpha^{'}}\left(
		\mathcal{M},\tau \right))}\cap \mathcal{M}}^{\sigma(L^{\alpha}\left(
	\mathcal{M},\tau \right),L^{\alpha^{'}}\left(
	\mathcal{M},\tau \right))}\\
        &=\overline{\mathcal{Y}%
		_{1}}^{\sigma(L^{\alpha}\left(
		\mathcal{M},\tau \right),L^{\alpha^{'}}\left(
		\mathcal{M},\tau \right))}\\
	    &=\mathcal{Y}.
		\end{align*}%
		Thus $\mathcal{Y}=\overline{\mathcal{Y}H_{0}^{\infty }}^{\sigma(L^{\alpha}\left(
			\mathcal{M},\tau \right),L^{\alpha^{'}}\left(
			\mathcal{M},\tau \right))} $. Moreover, it is
		not difficult to verify that for each $i$, left multiplication by the $%
		u_{i}u_{i}^{\ast }$ are contractive projections from $\mathcal{K}$ onto the
		summands $u_{i}H^{\alpha }$, and left multiplication by $I-%
		\sum_{i}u_{i}u_{i}^{\ast }$ is a contractive projection from $\mathcal{W}$
		onto $\mathcal{Y}$. Now the proof is completed.
	\end{proof}


\begin{thebibliography}{99}                                                                                               %
	\bibitem{Arveson} W. B. Arveson, Analyticity in operator algebras, Amer. J.
	Math. 89 (1967) 578--642.
	
	\bibitem{Bekjan} T. N. Bekjan and Q. Xu, Riesz and Szego type factorizations
	for non-commutative Hardy spaces, J. Operator Theory, 62 (2009) 215-231.
	
	\bibitem{Beurling} A. Beurling, On two problems concerning linear
	transformations in Hilbert space, Acta Math. 81 (1949) 239-255.
	
	\bibitem{B.L.} D. Blecher and L. E. Labuschagne, A Beurling's theorem for
	non-commutative $L^{p}$, J. Operator Theory, 59 (2008) 29-51.
	
	\bibitem{B.L.2} D. Blecher and L. E. Labuschagne, Outer for non commutative $%
	H^{p}$ revisited, arxiv: 1304.0518v1 [math. OA] (2013).
	
	\bibitem{Bochner} S. Bochner, Generalized conjugate and analytic functions
	without expansions, Proc. Nat. Acad. Sci. U.S.A. 45 (1959) 855-857.
	
	\bibitem{Chen3} Y. Chen, A non-commutative Beurling's theorem with respect to unitary invariant norms, arXiv: 1505.03952v1 [math. OA] (2015).
		
    \bibitem {chen1}Y. Chen, Lebesgue and Hardy spaces for symmetric norms
		$I$, arXiv: 1407.7920 [math. OA] (2014).
		
		\bibitem {chen2}Y. Chen, A general Beurling-Helson-Lowdenslager theorem on the disk, Advances in Applied Mathematics, 87(2017)1-15.
		\bibitem{F.H.W} H. Fan and D. Hadwin and W. Liu, An Extension of the
		Chen-Beurling-Helson-Lowdenslager Theorem, arXiv:1611.00357 [math.FA].
		\bibitem{conway} J. B. Conway, A Course in Functional Analysis, Springer Science and Business Media, Apr 17, 2013. 
		\bibitem{F.H.W-2} H. Fan and D. Hadwin and W. Liu, An Extension of the Beurling-Chen-Hadwin-Shen Theorem for Noncommutative Hardy Spaces Associated with Finite von Neumann Algebras,
		arxiv.org/abs/1801.05300 [math.FA].
		
		\bibitem{H.S.} U. Haagerup and H. Schultz, Brown measures of unbounded
		operators affiliated with a finite von Neumann algebra, Math. Scand., 100
		(2007), 209--263.
	
		
		\bibitem {halmos}P. Halmos, Shifts on Hilbert spaces, J. Reine Angew.
		Math. 208(1961) 102-112.
		
		\bibitem {helson}H. Helson, Lectures on Invariant Subspace, Academic
			Press, New York-London, 1964.
		
		\bibitem {H.L}H. Helson and D. Lowdenslager, Prediction theory and Fourier series in several variables, Acta Math. 99 (1958) 519-540.
		
		\bibitem {hoffman}K. Hoffman, Analytic functions and logmodular Banach algebras, Acta Math. 108(1962) 271-317.
		
		\bibitem {niko}N. K. Nikolski\i, Treatise on the shift operator,
			Spectral function theory, With an appendix by S. V. Hruscev and V. V. Peller,
		Translated from the Russian by Jaak Peetre, Grundlehren der Mathematischen
		Wissenschaften, 273. Springer-Verlag, Berlin, 1986.
		
		\bibitem {sager-liu} L. Sager and W. Liu, A Beurling-Chen-Hadwin-Shen Theorem for Noncommutative Hardy Spaces Associated with Semifinite von Neumann Algebras with Unitarily Invariant Norms,
		arXiv:1801.01448
		\bibitem{Saito} K. S. Saito, A note on invariant subspaces for finite
		maximal subdiagonal algebras, Proc. Amer. Math. Soc. 77 (1979) 348-352.
		
	\bibitem {srinivasan}T. P. Srinivasan, Simply invariant subspaces,
		Bull. Amer. Math. Soc. 69 (1963) 706--709.
	
		\bibitem{Takesaki} M. Takesaki, Theory of Operator Algebras I, Springer,
		1979.
	\end{thebibliography}
\end{document}